\newcommand{\Qhat}{\widehat{Q}}
\newcommand{\kth}{$k$\textsuperscript{th} }
\newcommand{\ith}{$i$\textsuperscript{th} }
\newcommand{\Fhat}{\widehat{F}}
\newcommand{\Ghat}{\widehat{G}}
\newcommand{\Stil}{\widetilde{S}}
\newcommand{\CI}{\text{CI}}
\newcommand{\ftil}{\widetilde{f}}
\newcommand{\gtil}{\widetilde{g}}
\newcommand{\Ltil}{\widetilde{L}}
\begin{document}

\title{Sequential estimation of quantiles\\with
  applications to A/B~testing and best-arm identification}
\author{Steven R. Howard\\
Department of Statistics\\
University of California, Berkeley\\
\texttt{stevehoward@berkeley.edu}
\and
Aaditya Ramdas \\
  Department of Statistics and Data Science\\
  Carnegie Mellon University\\
  \texttt{aramdas@stat.cmu.edu}}
\maketitle

\begin{abstract}
  We propose confidence sequences---sequences of confidence intervals which are
  valid uniformly over time---for quantiles of any distribution over a complete,
  fully-ordered set, based on a stream of i.i.d.\ observations. We give methods
  both for tracking a fixed quantile and for tracking all quantiles
  simultaneously. Specifically, we provide explicit expressions with small
  constants for intervals whose widths shrink at the fastest possible
  $\sqrt{t^{-1} \log\log t}$ rate, along with a non-asymptotic concentration
  inequality for the empirical distribution function which holds uniformly over
  time with the same rate. The latter strengthens Smirnov's empirical process
  law of the iterated logarithm and extends the Dvoretzky-Kiefer-Wolfowitz
  inequality to hold uniformly over time. We give a new algorithm and sample
  complexity bound for selecting an arm with an approximately best quantile in a
  multi-armed bandit framework. In simulations, our method requires fewer
  samples than existing methods by a factor of five to fifty.
\end{abstract}

\section{Introduction}\label{sec:introduction}

A fundamental problem in statistics is the estimation of the location of a
distribution based on independent and identically distributed samples. While the
mean is the most common measure of location, the median and other quantiles are
important alternatives. Quantiles are more robust to outliers and are
well-defined for ordinal variables, and sample quantiles exhibit favorable
concentration properties, which allow for strong estimation guarantees with
minimal assumptions. Beyond estimation, one may choose to actively seek a
distribution which maximizes a particular quantile, as in a multi-armed bandit
setup, in contrast to the usual setting of finding an arm with maximal mean. In
such problems, we wish to find an arm having an approximately best quantile with
high probability, while minimizing the total number of samples drawn.

In this paper, we consider the sequential estimation of quantiles and its
application to quantile best-arm identification. Specifically, given
  a stream of i.i.d.\ observations, we wish to form an estimate of a population
  quantile, or of all population quantiles, and to continuously update this
  estimate as more samples are observed to reflect our decreasing uncertainty.
Our key tool is the \emph{confidence sequence}: a sequence of confidence
intervals which are guaranteed to contain the desired quantile uniformly over an
unbounded time horizon, with the desired coverage probability. For example, if
$Q(p)$ denotes the true quantile function and $\Qhat_t(p)$ the sample quantile
function after having observed $t$ samples (see \cref{sec:fixed} for precise
definitions), then for any desired coverage level $\alpha \in (0,1)$,
\cref{th:fixed}(a) yields the following confidence sequence for the true median,
using as confidence bounds a pair of sample quantiles at each time $t$:
\begin{multline}
  \P\eparen{\forall t \in \N: \Qhat_t(1/2 - u_t) \leq Q(1/2)
    \leq \Qhat_t(1/2 + u_t)} \geq 1 - \alpha, \\
  \text{where }
  u_t \defineas
    0.72 \sqrt{t^{-1} [1.4 \log \log (2.04 t) + \log(9.97 / \alpha)]}.
    \label{eq:fixed_example}
\end{multline}
Informally, with high probability, the (unknown) population median lies between (observed)
sample quantiles slightly above and below the sample median, where ``slightly''
is determined by a decreasing sequence $u_t = \Ocal(\sqrt{t^{-1} \log \log t})$, and moreover,
this sequence of upper and lower bounds \emph{never} fails to contain the true
median. In addition to confidence sequences for a fixed quantile, we also derive
families of confidence sequences which hold uniformly both over time and over
all quantiles. As an example, for any $\alpha \in (0,0.25)$,
\cref{th:uniform_lil_quantiles} yields
\begin{multline}
  \P\eparen{\forall t \in \N, p \in (0,1): \Qhat_t(p - u_t) \leq Q(p)
    \leq \Qhat_t(p + u_t)} \geq 1 - \alpha, \\
  \text{where }
  u_t \defineas 0.85 \sqrt{t^{-1} [\log \log (et) + 0.8 \log(1612/\alpha)]}.
  \label{eq:example_uniform}
\end{multline}
The above closed form for $u_t$ is one of many possibilities, but
\cref{th:uniform_lil_quantiles} offers better constants, and permits any
$\alpha \in (0,1)$, if one is willing to perform numerical root-finding. For
example, with $\alpha = 0.05$, we can take
$u_t \defineas 0.85 \sqrt{t^{-1} (\log \log(et) + 8.12)}$ in~\eqref{eq:example_uniform}.

Confidence sequences of the form \eqref{eq:fixed_example} are critical for
quantile best-arm algorithms, while those of the form \eqref{eq:example_uniform}
are highly useful for proving corresponding sample complexity bounds. We
demonstrate these applications by proving a state-of-the-art sample complexity
bound for a new, LUCB-style algorithm. This algorithm outperforms existing
algorithms by a large margin in simulation, while the corresponding sample
complexity bound matches the best-known rates and requires considerably more
technical work than analogous proofs for successive elimination algorithms
previously considered.

\begin{figure}[t]
  \centering
  \includegraphics{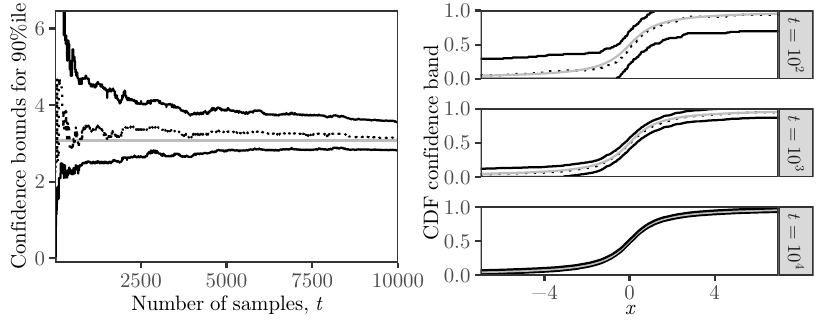}
  \caption{\emph{Left}: solid lines show upper and lower 95\%-confidence
    sequences using \cref{th:fixed} for the 90\%ile of a Cauchy distribution
    based on one sequence of i.i.d. draws. Grey line shows the true quantile,
    which lies between the bounds uniformly over all time $t \in \N$ with
    probability 0.95. Dotted line shows point estimates. \emph{Right}: solid
    lines show 95\%-confidence bands for the CDF of a Cauchy distribution at
    three times, $t =$ 100, 1,000, and 10,000, based on one sequence of
    i.i.d. draws. True CDF, grey, lies between the upper and lower bounds
    uniformly over all $x \in \R$ and $t \in \N$ with probability 0.95. Dotted
    line shows empirical CDF. \label{fig:intro}}
\end{figure}

For a fixed sample size, the celebrated Dvoretzky-Kiefer-Wolfowitz (DKW)
inequality (\citealp{dvoretzky_asymptotic_1956}, \citealp{massart_tight_1990})
bounds the uniform-norm deviation of the empirical CDF from the truth with high
probability. \Cref{th:uniform_lil_quantiles} follows from \cref{th:uniform_lil},
which gives an extension of the DKW inequality that holds uniformly over
time. From a theoretical point of view, \cref{th:uniform_lil} gives a
non-asymptotic strengthening of the empirical process law of the iterated
logarithm (LIL) by \citet{smirnov_approximate_1944}. From a practical point of
view, as \cref{fig:bounds} illustrates, our time-uniform DKW inequality of
\cref{th:uniform_lil} is only about a factor of about two wider in the radius of
the high-probability bound, relative to the fixed-sample DKW inequality. This
factor grows at a slow $\sqrt{\log \log t}$ rate, so holds over a very long time
horizon. \Cref{fig:intro} illustrates our confidence sequences both for a fixed
quantile and for the entire CDF.

Our quantile confidence sequences provide strong guarantees under minimal
assumptions while granting the decision-maker a great deal of flexibility. We
emphasize the following specific benefits of our confidence sequences:
\begin{enumerate}
\item[(P1)] \textbf{Non-asymptotic and distribution-free}: our confidence
  sequences offer coverage guarantees for all sample sizes in any i.i.d.\
  sampling scenario, regardless of the underlying distribution on any totally ordered space.
\item[(P2)] \textbf{Unbounded sample size}: our methods do not require a final
  sample size to be chosen ahead of time. Nevertheless, they may be tuned for a planned sample
  size, but always permit additional sampling.
\item[(P3)] \textbf{Arbitrary stopping rules}: we make no assumptions on the
   rule used to decide when to stop collecting data and act on given
  inferences. A user may even perform inference in hindsight based on a
  previously-seen sample size. That is, the ``stopping rule'' can be any random
  time and does not need to be a formal stopping time.
\item[(P4)] \textbf{Asymptotically zero width}: our confidence bounds for the
  $p$-quantile are based on $p \pm \Ocal(t^{-1/2})$ sample quantiles, ignoring
  log factors. In this sense, our confidence intervals shrink in width at nearly
  the same rate as pointwise confidence intervals (see \cref{sec:fcr} for a
  simple example of pointwise confidence intervals based on the central limit
  theorem).
\end{enumerate}

\subsection{Related work}\label{sec:related_work}

The pioneering work of \citet{darling_confidence_1967} introduced the idea of a
confidence sequence, as far as we are aware, and gave a confidence sequence for
the median. Their method exploits a standard connection between concentration of
quantiles and concentration of the empirical CDF, as does our work, and their
method extends trivially to estimating any other fixed quantile. Their
confidence sequence was based on the iterated-logarithm, time-uniform bound
derived in \citet{darling_iterated_1967}, and so shrinks in width at the fastest
possible $\sqrt{t^{-1} \log\log t}$ rate, like our \cref{th:fixed}(a). For the
median, their constants are excellent, but the lack of dependence on which
quantile is being estimated leads to looseness for tail quantiles, as
illustrated in \cref{fig:bounds}. Our results for fixed-quantile estimation
yield significantly tighter confidence sequences for tail quantiles (and are
also slightly tighter for the median). \Citet{brunel_nonasymptotic_2019} give
another iterated-logarithm-rate confidence sequence for quantiles, a special
case of their general method for $M$-estimators.

Our methods for deriving time-uniform, iterated-logarithm CDF and quantile
bounds are closely related to the class of methods known as ``chaining'' in
probability theory
\citep{dudley_sizes_1967,talagrand_generic_2006,gine_mathematical_2015,boucheron_concentration_2013},
and similar bounds can be derived using existing chaining techniques. We
emphasize our focus on practical constants; our \cref{th:uniform_lil}, for
example, extends the fixed-sample DKW bound of \citet{massart_tight_1990} to
hold uniformly over time at a price of roughly doubling the bound width over
many orders of magnitude of time (see \cref{fig:bounds_full} in the
appendix). Our work is also related to the vast literature on extreme value
theory, which contains many results on concentration of extreme sample quantiles
\citep{dekkers_estimation_1989,drees_smooth_1998,drees_large_2003,anderson_large_1984},
though not typically with our focus on time-uniform estimation. Our results can
be used to estimate any population quantile, but we place no particular emphasis
on the behavior of extreme sample quantiles. If one were particularly interested
in extreme tail behavior, e.g., in the distributional properties of the sample
maximum, then such references would prove more useful. In addition, general
distributional theory of order statistics (empirical quantiles) is well
established \citep{arnold_first_2008}, and specific variance and concentration
bounds for order statistics are available
\citep{boucheron_concentration_2012}. Our methods are rather different in that
we always bound population quantiles using sample quantiles, an approach which
fits naturally into applications and which yields methods that apply universally
without concern for specifics of the underlying distribution.

\Citet{shorack_empirical_1986} give an extensive survey of results for the
empirical process $(\Fhat_t - F)_{t=1}^\infty$ for uniform observations, and by
extension, the empirical distribution function for any sequence of i.i.d.\
observations. Of particular relevance is the LIL proved by
\citet{smirnov_approximate_1944}, and the proof given by
\citet{shorack_empirical_1986}, based on an improvement of a maximal inequality
due to \citet{james_functional_1975}. This maximal inequality is the key to our
sophisticated non-asymptotic empirical process iterated logarithm inequality,
\cref{th:uniform_lil}. The latter leads to new quantile confidence sequences
that are uniform over both quantiles and time which are significantly tighter
than the earlier such bounds used for quantile best-arm identification
\citep{szorenyi_qualitative_2015}.

The problem of selecting an approximately best arm, as measured by the largest
mean, was studied by \citet{even-dar_pac_2002} and \citet{mannor_sample_2004},
who gave an algorithm and sample complexity upper and lower bounds within a
logarithmic factor of each other. The best-arm identification or pure
exploration problem has received a great deal of attention since then; we
mention the influential work of \citet{bubeck_pure_2009} and the proposals of
\citet{jamieson_lil_2014}, \citet{kaufmann_complexity_2016}, and
\citet{zhao_adaptive_2016}, whose methods included iterated-logarithm
confidence sequences for means.

The problem of seeking an arm with the largest median (or other quantile),
rather than mean, was first considered by \citet{yu_sample_2013}, as far as we
are aware.  \Citet{szorenyi_qualitative_2015} proposed the
$(\epsilon,\delta)$-PAC problem formulation that we use, and gave an algorithm
with a sample complexity upper bound mirroring that of
\Citeauthor{even-dar_pac_2002}, including the logarithmic
factor. \Citeauthor{szorenyi_qualitative_2015} include a confidence sequence
valid over quantiles and time, derived via a union bound applied to the DKW
inequality (\citealp{dvoretzky_asymptotic_1956}, \citealp{massart_tight_1990}),
similar to the bound used by \citet[Theorem
4]{darling_nonparametric_1968}. \Citeauthor{szorenyi_qualitative_2015} also
analyzed a quantile-based regret-minimization problem, recently studied by
\citet{torossian_x-armed_2019} as well. \Citet{david_pure_2016} extended the
sample complexity of \citeauthor{szorenyi_qualitative_2015} to include
dependence on the quantile being optimized, while
\citet{kalogerias_best-arm_2020} discuss the $\epsilon = 0$ case and give
careful consideration to the gap definition appearing in the sample complexity
bound. Our procedure is a variant of the LUCB algorithm by
\Citet{kalyanakrishnan_pac_2012}, unlike previous quantile best-arm algorithms;
our analysis covers both the $\epsilon = 0$ and $\epsilon > 0$ cases; we improve
the upper bounds of \Citeauthor{szorenyi_qualitative_2015} by replacing the
logarithmic factor by an iterated-logarithm one and removing unnecessary
dependence on a unique best arm's gap; and we achieve considerably better
performance than prior algorithms in simulations.

\subsection{Paper outline}

After an introduction to the conceptual ideas of the paper in \cref{sec:warmup},
we present our confidence sequences for estimation of a fixed quantile in
\cref{sec:fixed}, while \cref{sec:uniform} gives a confidence sequence for all
quantiles simultaneously. \Cref{sec:graphical} offers a graphical comparison of
our bounds with each other and with existing bounds from the literature, as well
as advice for tuning bounds in practice. In \cref{sec:bai}, we analyze a new
algorithm for quantile $\epsilon$-best-arm identification in a multi-armed
bandit, with a state-of-the-art sample complexity bound. We gather proofs in
\cref{sec:proofs}. Implementations are available online for all confidence
sequences presented here (\url{https://github.com/gostevehoward/confseq}), along
with code to reproduce all plots and simulations
(\url{https://github.com/gostevehoward/quantilecs}).

\section{Warmup: linear boundaries and quantile confidence sequences}
\label{sec:warmup}

Before stating our main results, we first walk through the
derivation of a simple confidence sequence for quantiles to illustrate basic
techniques. In effect, we spell out the less-known duality between sequential tests and confidence sequences~\citep{howard_uniform_2018}, analogous to the well-known duality between (standard, fixed time) hypothesis tests and confidence intervals.

Let $(X_t)_{t=1}^\infty$ be a sequence of
i.i.d., real-valued observations from an unknown distribution, which we assume is continuous for this section only. For a given
$p\in(0,1)$, let $q \in \R$ be such that $\P(X_1 \leq q) = p$. We wish to
sequentially estimate this $p$-quantile, $q$, based on the observations
$(X_t)$. At a high level, our strategy is as follows:
\begin{enumerate}
\item We first imagine testing a specific hypothesis $H_{0,x}: q = x$ for some
  $x \in \R$ at a fixed sample size. Using the aforementioned duality between
  tests and intervals, we could construct a fixed-sample confidence interval for
  $q$ consisting of all those values of $x \in \R$ for which we fail to reject
  $H_{0,x}$.
\item To test $H_{0,x}$ for some fixed $x$, we observe that $H_{0,x}$ is true if
  and only if the random variables $(\indicator{X_t \leq x})_{t=1}^\infty$ are
  i.i.d.\ draws from a $\Bernoulli(p)$ distribution. Hence, if the number of
  samples were fixed in advance, testing $H_{0,x}$ would be equivalent to a
  standard parametric test: we observe a set of coin flips
  ($\indicator{X_t \leq x}$), and the null hypothesis states that the bias of
  this coin is $p$. Inverting this test, as mentioned in the previous point,
  yields a fixed-sample confidence interval for $q$.
\item Instead of a fixed-sample test, we could apply a sequential hypothesis
  test, one which can be repeatedly conducted after each new sample $X_t$ is
  observed, with the guarantee that, with the desired, high probability, we will
  \emph{never} reject $H_{0,x}$ when it is true. For example, appropriate
  variants of Wald's Sequential Probability Ratio Test (SPRT) would
  suffice. Inverting such a sequential test, we upgrade our fixed-sample
  confidence interval to a \emph{confidence sequence}, a sequence of confidence
  intervals $(\CI_t)_{t=1}^\infty$ which is guaranteed to contain $q$ uniformly
  over time with high probability: $\P(\forall t: q \in \CI_t) \geq 1 - \alpha$.
\end{enumerate}

To give a rigorous example, consider the random variables
$\xi_t \defineas \indicator{X_t \leq q}$ for $t \in \N$. We cannot observe
$\xi_t$ since $q$ is unknown, but we know $(\xi_t)$ is a sequence of i.i.d.\
$\Bernoulli(p)$ random variables. A standard (suboptimal, but sufficient for our current exposition) result due to
\citet{hoeffding_probability_1963} implies that the centered random variable
$\xi_1 - p$ is sub-Gaussian with variance parameter $1/4$, i.e.,
$\E e^{\lambda (\xi_1 - p)} \leq e^{\lambda^2 / 8}$ for any $\lambda \in
\R$. Writing $L_0 \defineas 1$ and, for $t \in \N$, defining
\begin{align}
  L_t \defineas
    \expebrace{\lambda \sum_{i=1}^t (\xi_i - p) - \frac{\lambda^2 t}{8}},
\end{align}
we observe that $(L_t)_{t=0}^\infty$ is a positive
supermartingale for any $\lambda \in \R$~\citep{darling_confidence_1967,howard_exponential_2018}. Then, for any $\alpha \in (0,1)$,
Ville's inequality \citep{ville_etude_1939} yields
$\P(\exists t \geq 1: L_t \geq 1/\alpha) \leq \alpha$, or equivalently,
\begin{align}
  \P\eparen{\exists t \geq 1: \sum_{i=1}^t \xi_i \geq tp +
    \frac{\log \alpha^{-1}}{\lambda} + \frac{\lambda t}{8}
  } \leq \alpha. \label{eq:linear_bound}
\end{align}
The sequence
$\eparen{\frac{\log \alpha^{-1}}{\lambda} + \frac{\lambda t}{8}}_{t=1}^\infty$
gives a boundary, linear in $t$, which the centered process
$\eparen{\sum_{i=1}^t (\xi_i - p)}_{t=1}^\infty$ is unlikely to ever cross. For
$\lambda > 0$, this bounds the upper deviations of the partial sums
$\eparen{\sum_{i=1}^t \xi_i}_{t=1}^\infty$ above their expectations, while for
$\lambda < 0$, this bounds the lower deviations. Thus by simple rearrangement,  and writing
\[
u_t \defineas \frac{\log \alpha^{-1}}{\lambda t} + \frac{\lambda}{8}, 
\] 
we infer that
$t(p - u_t) < \sum_{i=1}^t \xi_i < t(p + u_t)$ uniformly over all $t \in \N$ with
probability at least $1 - \alpha$. Observe that
$\sum_{i=1}^t \xi_i$ equals $\eabs{\ebrace{i \in [t]: X_i \leq q}}$, the number of
observations up to time $t$ which lie below $q$. So if
$\sum_{i=1}^t \xi_i < t(p + u_t)$, then we must have
$q < X^t_{(\ceil{t(p + u_t)})}$, where $X^t_{(k)}$ is the \kth order statistic
of $X_1, \dots, X_t$. Likewise, $\sum_{i=1}^t \xi_i > t(p - u_t)$ implies
$q > X^t_{(\floor{t(p - u_t)})}$. In other words, with probability at least
$1 - \alpha$,
\begin{align}
  q \in \eparen{X^t_{(\floor{t(p - u_t)})}, X^t_{(\ceil{t(p + u_t)})}}
  \quad \text{ simultaneously for all $t \in \N$},
\end{align}
yielding a confidence sequence for the $p$-quantile, $q$. The main drawback of
this confidence sequence is that $u_t$ does not decrease to zero as
$t \uparrow \infty$, so that we do not, in general, expect the confidence
sequence to approach zero width as our sample size grows without bound. In other
words, the precision of this estimation strategy is unnecessarily limited. The
confidence sequences of \cref{sec:fixed} remove this restriction by replacing
the $\Ocal(t)$ boundary of \eqref{eq:linear_bound} with a curved boundary
growing at the rate $\Ocal(\sqrt{t \log t})$ or $\Ocal(\sqrt{t \log \log t})$.

\section{Confidence sequences for a fixed quantile}\label{sec:fixed}

We now state our general problem formulation, which removes the assumption that
observations are real-valued or from a continuous distribution. Let
$(X_i)_{i=1}^\infty$ be a sequence of i.i.d.\ observations taking values in some
complete, totally-ordered set $(\Xcal, \leq)$. We shall also make use of the
corresponding relations $\geq$, $<$ and $>$ on $\Xcal$. Write
$F(x) \defineas \P(X_1 \leq x)$ for the cumulative distribution function (CDF),
$F^-(x) \defineas \P(X_1 < x)$, and define the empirical versions of these
functions $\Fhat_t(x) \defineas t^{-1} \sum_{i=1}^t \indicator{X_i \leq x}$ and
$\Fhat^-_t(x) \defineas t^{-1} \sum_{i=1}^t \indicator{X_i < x}$. Define the
(standard) upper quantile function as
\[
Q(p) \defineas \sup\brace{x \in \Xcal: F(x) \leq p}
\] and the lower quantile
function 
\[
Q^-(p) \defineas \sup\brace{x \in \Xcal: F(x) < p}.
\] 
Finally, define
the corresponding (plug-in) upper and lower empirical quantile functions
$\Qhat_t(p) \defineas \sup\brace{x \in \Xcal: \Fhat_t(x) \leq p}$ and
$\Qhat^-_t(p) \defineas \sup\brace{x \in \Xcal: \Fhat_t(x) < p}$. We extend the
empirical quantile functions to hold over domain $p \in \R$ by taking the
convention that the supremum of the empty set is $\inf \Xcal$, so that
$\Qhat_t(p) = \Qhat^-_t(p) = \inf \Xcal$ for $p < 0$ while
$\Qhat_t(p) = \Qhat^-_t(p) = \sup \Xcal$ for $p > 1$.

Fixing any $p \in (0,1)$ and $\alpha \in (0,1)$, our goal in this section is to
give a $(1-\alpha)$-confidence sequence for the true quantiles $Q^-(p),Q(p)$ in
terms of sample quantiles. In particular, we propose positive, real-valued
sequences $l_t(p)$ and $u_t(p)$ for $t \in \N$, each decreasing to zero as
$t \uparrow \infty$, satisfying
\begin{align}
  \P\eparen{
      \exists t \in \N:
      Q^-(p) < \Qhat_t(p - l_t(p)) \text{ or }
      Q(p) > \Qhat^-_t(p + u_t(p))
    } &\leq \alpha. \label{eq:fixed_cs}
\end{align}
Stated differently, for any $q \in [Q^-(p),Q(p)]$, we would have
\begin{align}\label{eq:quantile_cis}
 \P\eparen{
      \forall t \in \N:
      q \in [\Qhat_t(p - l_t(p)), \Qhat^-_t(p + u_t(p))]
    } &\geq 1-\alpha.
\end{align}
The sequences $(l_t(p), u_t(p))_{t=1}^\infty$ characterize the lower and upper
radii of the confidence intervals in ``$p$-space'', before passing through the
sample quantile functions $\Qhat_t$ and $\Qhat^-_t$ to obtain final confidence
bounds in $\Xcal$. In what follows, we characterize the asymptotic rates of our
confidence interval widths in terms of these ``$p$-space'' widths.

Before stating our confidence sequences, we observe the following lower bound, a
straightforward consequence of the law of the iterated logarithm.
\begin{proposition}[Quantile confidence sequence lower bound]
  \label{th:lower_bound}
  For any $p \in (0,1)$ such that $F(Q(p)) = p$, if
  \begin{align}
    \limsup_{t \to \infty} \frac{u_t}{\sqrt{2 p (1 - p) t^{-1} \log \log t}}
    < 1,
  \end{align}
  then $\P(\exists t \in \N: Q(p) \geq \Qhat_t(p + u_t)) = 1$.
\end{proposition}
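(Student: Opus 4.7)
The plan is to reduce the statement to the classical Hartman--Wintner law of the iterated logarithm applied to a Bernoulli sequence. The key observation is that the hypothesis $F(Q(p)) = p$ lets us convert indicator variables of the form $\indicator{X_i \leq Q(p)}$ into i.i.d.\ $\Bernoulli(p)$ variables, whose partial sums are the process to which the LIL applies.

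Concretely, I would first define $\xi_i \defineas \indicator{X_i \leq Q(p)}$, so $(\xi_i)_{i=1}^\infty$ are i.i.d.\ $\Bernoulli(p)$ with variance $p(1-p)$ by the assumption $F(Q(p)) = p$, and $\Fhat_t(Q(p)) = t^{-1}\sum_{i=1}^t \xi_i$. Hartman--Wintner then yields
\[
\limsup_{t \to \infty} \frac{\Fhat_t(Q(p)) - p}{\sqrt{2 p(1-p)\, t^{-1} \log \log t}} = 1 \quad \text{a.s.}
\]
Next I would translate the event $\{\Fhat_t(Q(p)) > p + u_t\}$ into the target event. Monotonicity of $\Fhat_t$ gives: any $x$ with $\Fhat_t(x) \leq p + u_t$ must satisfy $\Fhat_t(x) < \Fhat_t(Q(p))$, hence $x < Q(p)$; taking the supremum over such $x$ yields $\Qhat_t(p + u_t) \leq Q(p)$. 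Thus $\{\Fhat_t(Q(p)) > p + u_t\} \subseteq \{Q(p) \geq \Qhat_t(p + u_t)\}$.

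Finally, I would combine these facts. Write $c \defineas \limsup_t u_t / \sqrt{2 p(1-p)\, t^{-1} \log \log t} < 1$, fix any $c' \in (c,1)$, and choose $T$ so that $u_t < c' \sqrt{2 p(1-p)\, t^{-1} \log \log t}$ for all $t \geq T$. The LIL guarantees that, almost surely, there exist infinitely many $t$ (hence some $t \geq T$) with $\Fhat_t(Q(p)) - p > c' \sqrt{2 p(1-p)\, t^{-1} \log \log t} > u_t$. By the containment in the previous paragraph, $Q(p) \geq \Qhat_t(p + u_t)$ for such a $t$, giving the conclusion.

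The proof is essentially bookkeeping on top of a standard LIL; the only mildly delicate step is the containment $\{\Fhat_t(Q(p)) > p + u_t\} \subseteq \{Q(p) \geq \Qhat_t(p + u_t)\}$, which requires being careful with the definition of the \emph{upper} empirical quantile $\Qhat_t$ as a supremum over a weak inequality, especially since $\Xcal$ is only assumed to be a complete totally ordered set rather than $\R$. No continuity of $F$ is needed beyond the pointwise equality $F(Q(p)) = p$.
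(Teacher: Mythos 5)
Your proof is correct and follows essentially the same route as the paper's: apply the classical LIL to the centered Bernoulli sums $\Fhat_t(Q(p)) - p$, use the hypothesis on $u_t$ to conclude the centered sum exceeds $u_t$ infinitely often almost surely, and then translate via the implication $\Fhat_t(x) > p' \implies x \geq \Qhat_t(p')$ (with $x = Q(p)$, $p' = p + u_t$). The paper cites this implication directly as a listed property of $\Qhat_t$, while you re-derive it in place; both are fine, and your careful unpacking of the translation step is the right thing to scrutinize given that $\Qhat_t$ is defined as a supremum over a weak inequality on a general complete totally ordered set.
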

This result is proved in \cref{sec:proof_lower_bound}. Note that the condition
$F(Q(p)) = p$ holds for all $p \in (0,1)$ when $F$ is continuous, and holds for at least some $p$ otherwise; more technical effort can be expended to remove this restriction, but we do not do this since the takeaway message is already transparent.

We now propose two confidence sequences. The first has radii given by the
function
\begin{align}\label{eq:stitching_simple}
  f_t(p) \defineas 1.5 \sqrt{p (1-p) \ell(t)} + 0.8 \ell(t)
  \quad \text{where} \quad
  \ell(t) \defineas \frac{1.4 \log\log(2.1 t) + \log(10 / \alpha)}{t}.
\end{align}
This method has the advantage of a closed-form expression with small constants,
and evidently $f_t(p) \sim \sqrt{3.15 p(1-p) t^{-1} \log \log t}$ as $t \to \infty$, matching the
lower bound given in \cref{th:lower_bound} up to the leading
constant. \Cref{sec:proof_fixed} gives a more general version of $f_t(p)$
involving several hyperparameters, showing that the leading constant may in fact
be brought arbitrarily close to the optimal value of two appearing in \cref{th:lower_bound}, though doing so tends to yield inferior
performance in practice. The derivation of $f_t(p)$ relies on a method that goes
by different names --- chaining, ``peeling'', or ``stitching'' --- in which we divide time into geometrically-spaced epochs
$[\eta^k, \eta^{k+1})$, and bound the miscoverage event within the \kth epoch by
a probability which decays like $k^{-s}$, for hyperparameters $\eta, s > 1$
described in \cref{sec:proof_fixed}.

Our second method uses a function $\widetilde{f}_t(p)$ which requires numerical
root-finding to compute exactly, but has the asymptotic expansion
\begin{align}
  \widetilde{f}_t(p) = \sqrt{
    \frac{p (1 - p)}{t}
    \ebracket{\log\pfrac{p(1-p) t}{C_{p,r}^2 \alpha^2} + o(1)}},\quad
    \text{where }
    C_{p,r} \defineas
      \sqrt{2\pi} p (1-p) f_\beta\eparen{p; \frac{r}{1-p}, \frac{r}{p}},
  \label{eq:ftilde_expansion}
\end{align}
as $t \to \infty$; here $f_\beta(x; a, b)$ denotes the density of the Beta
distribution with parameters $a, b$, and $r > 0$ is a tuning
parameter. The function $\widetilde{f}_t(p)$ is described fully in
\cref{sec:proof_fixed}, while we discuss the choice of the tuning parameter $r$
in \cref{sec:graphical} and derive the asymptotic expansion
\eqref{eq:ftilde_expansion} in \cref{sec:derive_expansion}. We note here that as $p$
approaches zero or one,
the constant $C_{p,r}$ approaches a constant depending only on $r$, so it does not contribute to dependence on $p$ for tail quantiles. Compared to $f_t(p)$, $\widetilde{f}_t(p)$ yields confidence
interval widths with a slightly worse asymptotic rate of
$\Ocal(\sqrt{t^{-1} \log t})$.
Even though neither of our methods uniformly dominates the other, the worse rate is usually preferable in practice, as we explore
in \cref{sec:graphical}. Informally, the reason is that any method with asymptotically optimal rates must be looser at practically relevant sample sizes in order to gain this later tightness, since the overall probability of error of both envelopes can be made arbitrarily close to $\alpha$. 
The following result shows that both the above methods yield
valid confidence sequences for any fixed $p$.
\begin{theorem}[Confidence sequence for a fixed quantile]\label{th:fixed}
  Taking $f_t$ from \eqref{eq:stitching_simple}, for any $p \in (0,1)$ and any
  $\alpha \in (0,1)$, we have
  \begin{align}
    \P\eparen{
      \exists t \in \N:
      Q^-(p) < \Qhat_t\eparen{p - f_t(1-p)} \text{ or }
      Q(p) > \Qhat^-_t\eparen{p + f_{t}(p)}
    } \leq \alpha.
  \end{align}
  The same holds with $\widetilde{f}_t$ from \eqref{eq:beta_bound} (asymptotically,~\eqref{eq:ftilde_expansion}) in place of
  $f_t$.
\end{theorem}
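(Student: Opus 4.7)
My plan is to reduce the two-sided coverage event in \cref{th:fixed} to a pair of one-sided upper deviations for a partial sum of i.i.d.\ Bernoullis, and then feed those deviations into two different uniform-in-time concentration inequalities: a stitched boundary for $f_t$, and a Beta-mixture supermartingale for $\widetilde f_t$. Unfolding $\Qhat_t$ and $\Qhat^-_t$ via order statistics, the upper-miscoverage event $\{Q(p) > \Qhat^-_t(p + f_t(p))\}$ forces at least $\lceil (p + f_t(p))\, t \rceil$ of $X_1, \ldots, X_t$ to be strictly less than $Q(p)$, i.e.\ $\Fhat^-_t(Q(p)) \geq p + f_t(p)$, and the increments $\indicator{X_i < Q(p)}$ are i.i.d.\ Bernoulli with parameter $F^-(Q(p)) \leq p$ by the definition of $Q(p)$ as a supremum. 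Symmetrically, the lower event $\{Q^-(p) < \Qhat_t(p - f_t(1-p))\}$ forces $\Fhat_t(Q^-(p)) \leq p - f_t(1-p)$, where $\indicator{X_i \leq Q^-(p)}$ is Bernoulli with parameter $F(Q^-(p)) \geq p$ by right-continuity of $F$. A uniform coupling to a common stream of Uniforms, together with the flip $\xi \mapsto 1 - \xi$ on the lower side, reduces both events to the single master inequality
\begin{align*}
  \P\eparen{\exists t \in \N :\; S^{(q)}_t - t q \geq t\, f_t(q)} \leq \alpha/2
\end{align*}
for i.i.d.\ $S^{(q)}_t \sim \text{Binomial}(t, q)$, applied once with $q = p$ and once with $q = 1 - p$.

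\textbf{For $f_t$.} I would invoke the stitched-boundary machinery of~\citet{howard_uniform_2018}. The centered Bernoulli $\xi_i - q$ is sub-Bernoulli with variance $q(1-q)$, so for each admissible $\lambda > 0$ the process
\begin{align*}
  L_t^{(\lambda)} = \exp\eparen{\lambda\,(S^{(q)}_t - t q) - t\, \psi_B(\lambda)}
\end{align*}
is a positive supermartingale, with $\psi_B$ the Bernoulli cumulant generating function. Partitioning time into geometric epochs $[\eta^k, \eta^{k+1})$, applying Ville's inequality within each epoch to $L_t^{(\lambda_k)}$ with an epoch-optimal $\lambda_k$, and union-bounding over epochs with weights proportional to $k^{-s}$, yields a curved boundary of the form $c_1 \sqrt{t\, q(1-q)\, \ell(t)} + c_2\, \ell(t)$ with $\ell(t) = s \log\log(\eta t) + \log(\zeta(s)/\alpha)$. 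Specialising to $\eta = 2.04$ and $s = 1.4$ and collecting constants reproduces~\eqref{eq:stitching_simple}; since $q(1-q)$ is symmetric in $q \leftrightarrow 1 - q$, the same envelope covers both the $q = p$ and $q = 1 - p$ instances.

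\textbf{For $\widetilde f_t$.} Here I would build a method-of-mixtures supermartingale exploiting Beta--Binomial conjugacy. Placing a Beta prior with shape parameters $(r/(1-p),\; r/p)$ on a hypothetical alternative success probability and integrating the SPRT-style likelihood ratio against this prior yields a closed-form mixture supermartingale $M_t$. Ville's inequality gives $\P(\exists t : M_t \geq 1/\alpha) \leq \alpha$, and inverting the implicit condition $\log M_t \geq \log \alpha^{-1}$ produces the boundary defining $\widetilde f_t$. The leading constant $C_{p,r}$ in~\eqref{eq:ftilde_expansion} is the value of the Beta prior density at the true $p$, and the expansion itself follows from Laplace's method applied to the mixture integral as $t \to \infty$.

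The main obstacle is not any single estimate but bookkeeping in two places. First, the reduction above must be carried out for arbitrary, possibly discrete, distributions on a totally ordered set, so the sup/inf manipulations and the distinction between $F$ and $F^-$ (equivalently between $\Qhat_t$ and $\Qhat^-_t$) must be tracked precisely; a careless identification like $\Fhat_t(Q(p)) \geq p + u_t$ in place of $\Fhat^-_t(Q(p)) \geq p + u_t$ is fatal when $F$ has an atom at $Q(p)$. Second, extracting the explicit small constants $1.5$, $0.8$, $1.4$, $2.1$, $10$ in~\eqref{eq:stitching_simple} is a quantitative optimization of the stitching hyperparameters $(\eta, s)$, for which I would lean on the general theory of~\citet{howard_uniform_2018} rather than redo the variational calculation from scratch.
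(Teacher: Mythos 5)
Your reduction to two one-sided Bernoulli deviation inequalities is correct, and the coupling step is valid: if $\mu := F^-(Q(p)) \leq p$, then $\indicator{U_i\leq\mu} \leq \indicator{U_i\leq p}$ pointwise for i.i.d.\ uniforms, so the event $\{\exists t: \Fhat^-_t(Q(p)) \geq p + f_t(p)\}$ is a.s.\ contained in the analogous event for a $\Binomial(t,p)$ walk; the lower side is handled symmetrically with $q = 1-p$. You also correctly flag the $\Fhat$-vs-$\Fhat^-$ distinction as the main place where carelessness would break the argument for atomic $F$. So the proposal is sound.

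It does, however, take a genuinely different route from the paper. Rather than couple to an auxiliary Binomial walk, the paper introduces the tie-breaking weight $\pi(p)$ in \eqref{eq:pi_defn} and defines a single process $S_t(p) = \sum_i[\indicator{X_i<Q(p)} + \pi(p)\indicator{X_i=Q(p)} - p]$ whose increments are i.i.d., exactly mean-zero, and a.s.\ in $[-p,1-p]$. The crucial payoff is the two-sided sandwich $\Fhat^-_t(Q(p)) \leq p + S_t(p)/t \leq \Fhat_t(Q^-(p))$, so that upper and lower confidence bounds both follow from a two-tailed deviation bound on this one process; no coupling to an auxiliary Uniform stream, and no invocation of stochastic dominance, is needed. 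Your approach replaces the $\pi(p)$ device with a pair of dominance arguments applied to two different indicator processes ($\indicator{X_i<Q(p)}$ and $\indicator{X_i\leq Q^-(p)}$), which reach the same endpoint because either construction yields a process that is sub-Bernoulli with range parameters $g=p$, $h=1-p$ and is therefore controlled by the same stitched or Beta-mixture boundary. The paper's construction is slightly tidier (one martingale for both tails, no coupling), while yours arguably makes more transparent why a boundary calibrated to parameter $p$ still applies when $\P(X_i<Q(p))$ is strictly less than $p$. One small imprecision worth noting: your remark that ``since $q(1-q)$ is symmetric, the same envelope covers both instances'' holds for the simplified display \eqref{eq:stitching_simple} but not for the more general stitched boundary \eqref{eq:stitching}, where the sub-gamma scale $c_p=(1-2p)/3$ changes sign under $p\leftrightarrow 1-p$; this is harmless here because the theorem explicitly inserts $f_t(1-p)$ on the lower side, but the symmetry observation should not be load-bearing.
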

The proof, given in \cref{sec:proof_fixed}, involves constructing a martingale
having bounded increments as a function of the true quantiles $Q^-(p)$ and
$Q(p)$. Then uniform concentration arguments 
show that $f_t(p)$ and $\widetilde{f}_t(p)$ bound the deviations of this
martingale from zero, uniformly over time, with high probability. We deduce
plausible values for the true quantiles from this high-probability restriction
on the values of the martingale.

We could derive a simpler boundary from a sub-Gaussian bound, like that presented in the previous section. For example, one can
replace $f_t(p)$ or $\widetilde{f}_t(p)$ with
\begin{align}\label{eq:normal_mixture}
  \sqrt{\frac{t + r}{t^2} \log\pfrac{t + r}{\alpha^2 r}}
\end{align}
for any $r > 0$ (e.g., \citealp[eq. 3.7]{howard_uniform_2018}). This bound lacks the appropriate dependence on $\sqrt{p(1-p)}$ indicated in \cref{th:lower_bound}. Our method instead uses ``sub-gamma''
(for $f_t$) and ``sub-Bernoulli'' (for $\widetilde{f}_t$) bounds
\citep{howard_exponential_2018} to achieve the correct dependence. The presented bounds are never looser than
those obtained by a sub-Gaussian argument, and will be much tighter when $p$ is
close to zero or one, as we later illustrate in \cref{fig:bounds}(b).


Having presented our confidence sequences for a fixed quantile, we next present
bounds that are uniform over both quantiles and time.

\section{Confidence sequences for all quantiles simultaneously}
\label{sec:uniform}

\Cref{th:fixed} is useful when the experimenter has decided ahead of time to
focus attention on a particular quantile, or perhaps a small number of quantiles
(via a union bound). In some cases, however, it may be preferable to estimate
all quantiles simultaneously, so that the experimenter may adaptively choose
which quantiles to estimate after seeing the data. Equivalently, one may wish to
bound the deviations of all sample quantiles uniformly over time, as in our
proof of \cref{th:qlucb} in \cref{sec:bai}. Recall that for a fixed time $t$ and
$\alpha \in (0,1)$, the DKW inequality
\citep{dvoretzky_asymptotic_1956,massart_tight_1990} states that
\begin{align}
  \P\eparen{ \enorm{\Fhat_t - F}_\infty > \sqrt{\frac{\log(2/\alpha)}{2t}} }
  \leq \alpha.
\end{align}
In tandem with the implications in \eqref{eq:Fgeq_Fleq} of \cref{sec:proofs},
the DKW inequality yields
\begin{align}
  \P\eparen{
    \exists p \in (0,1): Q^-(p) < \Qhat^-_t(p - l_t) \text{ or }
    Q(p) > \Qhat_t(p + u_t)
  } \leq \alpha,
  \label{eq:dkw_cs}
\end{align}
where  $l_t = u_t = \sqrt{\log(2 / \alpha)/(2t)}$.
In this section, we derive a $(1-\alpha)$-confidence sequence which is valid
uniformly over both quantiles and time, based on a function sequence
$l_t(p), u_t(p)$ decreasing to zero pointwise as $t \uparrow \infty$:
\begin{align}
  \P\eparen{
    \exists t \in \N, p \in (0,1):
    Q^-(p) < \Qhat^-_t(p - l_t(p)) \text{ or } Q(p) > \Qhat_t(p + u_t(p))
  } \leq \alpha.
\end{align}

Our method is based on the
following non-asymptotic iterated logarithm inequality for the empirical process
$(\Fhat_t - F)_{t=1}^\infty$, which may be of independent interest.
\begin{theorem}[Empirical process finite LIL bound]\label{th:uniform_lil}
  For any initial time $m \geq 1$ and $C \geq 7$, we have
  \begin{align}
    \P\eparen{\exists t \geq m:
      \enorm{\Fhat_t - F}_\infty > 0.85 \sqrt{\frac{\log \log(et/m) + C}{t}}}
    \leq 1612 e^{-1.25C}.
    \label{eq:uniform_lil_ineq}
  \end{align}
  Furthermore, for any $A > 1/\sqrt{2}$, $C > 0$, and $m \geq 1$, we have
  \begin{align}
    \P\eparen{\enorm{\Fhat_t - F}_\infty
      > A \sqrt{\frac{\log \log(et/m) + C}{t}} \text{ infinitely often}} = 0.
    \label{eq:empirical_io}
  \end{align}
\end{theorem}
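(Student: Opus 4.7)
The plan is to establish both parts via a stitching (or \emph{peeling}) argument: partition $\{t \geq m\}$ into geometric epochs, apply a maximal inequality within each epoch, and sum the resulting per-epoch failure probabilities. Specifically, let $\eta > 1$ be a tuning parameter to be optimized, set $I_k = [m\eta^k, m\eta^{k+1}) \cap \mathbb{N}$ for $k \geq 0$, and note that $\{t \in \mathbb{N}: t \geq m\} = \bigcup_k I_k$.

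Within each $I_k$, the key tool is a maximal inequality of James--Shorack type (following \citet{james_functional_1975} as refined by \citet{shorack_empirical_1986}) of the form
\[
  \P\!\left(\sup_{t \in I_k}\|\Fhat_t - F\|_\infty > \lambda\right)
    \leq c_1 \,\P\!\left(\|\Fhat_{n_k} - F\|_\infty > c_2 \lambda\right),
\]
where $n_k = \lceil m\eta^{k+1}\rceil$ is the right endpoint of the epoch and $c_1, c_2$ are universal constants; combined with the sharp DKW inequality of \citet{massart_tight_1990}, this yields a sub-Gaussian per-epoch tail with exponent $-2 c_2^2 n_k \lambda^2$. I would then choose the per-epoch threshold $\lambda_k$ so that the claimed envelope dominates $\lambda_k$ throughout $I_k$: since for $t \in I_k$ the claimed radius satisfies $0.85\sqrt{(\log\log(et/m)+C)/t} \geq 0.85\sqrt{(\log\log(e\eta^k)+C)/n_k}$, it suffices to take $\lambda_k = 0.85\sqrt{(\log\log(e\eta^k)+C)/n_k}$. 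Substituting into the DKW bound gives a per-epoch failure probability
\[
  \P(B_k) \;\leq\; 2c_1 \exp\!\left\{-2c_2^2 (0.85)^2\bigl[\log\log(e\eta^k) + C\bigr]\right\}
  \;=\; 2c_1 \bigl(1 + k\log\eta\bigr)^{-\beta}\, e^{-\beta C},
\]
with $\beta = 2c_2^2 (0.85)^2$. Tuning the constants so that $\beta = 1.25 > 1$ makes the series $\sum_k (1 + k\log\eta)^{-\beta}$ convergent; the condition $C \geq 7$ then allows the first few terms of the series together with the prefactor $2c_1$ to be absorbed into the constant $1612$. A union bound over $k$ delivers part~(1).

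For part~(2), I would rerun the identical argument with $0.85$ replaced by any $A > 1/\sqrt{2}$: the resulting exponent $\beta = 2c_2^2 A^2$ can be made to exceed $1$ by choosing $\eta$ close enough to $1$, precisely when $A > 1/\sqrt{2}$, which matches the classical LIL constant of \citet{smirnov_approximate_1944}. The per-epoch probabilities $\P(B_k)$ are then summable in $k$, so the Borel--Cantelli lemma yields $\P(B_k \text{ i.o.}) = 0$; since exceeding the claimed envelope at infinitely many $t$ forces $B_k$ to occur for infinitely many $k$, part~(2) follows. The principal technical obstacle is sharpening the universal constants in the James--Shorack maximal inequality and jointly optimizing $\eta$, the integer rounding of epoch boundaries, and the threshold function so as to match the specific numerical values $0.85$, $7$, $1612$, and $1.25$; the stitching structure above is the heart of the argument, and everything else is careful constant-chasing.
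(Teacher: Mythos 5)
Your approach matches the paper's proof step for step: geometric epochs $[m\eta^k, m\eta^{k+1})$, the James--Shorack maximal inequality to push each epoch's supremum to a single-time DKW bound (Massart) at the right endpoint, summation of per-epoch probabilities, and Borel--Cantelli for the infinitely-often statement. One small refinement worth noting: the James--Shorack inequality controls the \emph{scaled} process $\sup_t \sqrt{t}\,\lVert\Fhat_t - F\rVert_\infty$ at a fixed threshold, and the paper exploits this by freezing only the $\log\log$ factor within each epoch---defining the per-epoch event via the radius $A\sqrt{(\log\log(e\eta^k)+C)/t}$, so the threshold $\lambda = A\sqrt{\log\log(e\eta^k)+C}$ is genuinely constant over the epoch. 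Your version freezes \emph{both} the $\log\log$ factor and the $1/\sqrt{t}$ factor at the right endpoint $n_k$, which is also valid but costs an extra factor of roughly $\sqrt{\eta}$ once you convert your unscaled event into the scaled form the lemma actually bounds; this looseness just feeds into the constant-chasing that you already correctly identify as the remaining technical work.
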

We give a more detailed result along with the proof in
\cref{sec:proof_uniform_lil}, based on a maximal inequality due to
\citet{james_functional_1975} and \citet{shorack_empirical_1986} combined with a
union bound over exponentially-spaced epochs. Taking $A$ arbitrarily close to $1/\sqrt{2}$
immediately implies the following asymptotic upper LIL.
\begin{corollary}[\citealp{smirnov_approximate_1944}]\label{th:upper_lil}
  For any (possibly discontinuous) $F$, we have
  \begin{align}
    \limsup_{t \to \infty}
    \frac{\norm{\Fhat_t - F}_\infty}{\sqrt{(1/2) t^{-1} \log \log t}}
    \leq 1 \text{ almost surely}.
    \label{eq:asymp_lil}
  \end{align}
\end{corollary}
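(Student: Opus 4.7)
The plan is to deduce the asymptotic almost-sure statement directly from the second half of \cref{th:uniform_lil}, namely equation~\eqref{eq:empirical_io}, by letting the constant $A$ approach its infimum $1/\sqrt{2}$ along a countable sequence. All the heavy lifting (the maximal inequality, the epoch construction, the Borel--Cantelli-style summability) is already packaged inside \cref{th:uniform_lil}; the corollary is just a matter of translating a non-asymptotic infinitely-often bound into an asymptotic $\limsup$.

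Concretely, I fix $m=1$ and $C=1$ (any positive values will do) and, for each integer $n \geq 1$, apply \eqref{eq:empirical_io} with $A_n \defineas 1/\sqrt{2} + 1/n > 1/\sqrt{2}$. Each application yields a null event
\begin{align*}
  E_n \defineas \eparen{\enorm{\Fhat_t - F}_\infty > A_n \sqrt{\frac{\log\log(et) + C}{t}} \text{ infinitely often}},
\end{align*}
so $\P(\bigcup_n E_n) = 0$. On the complement, for every $n$ there exists a random $T_n$ such that for all $t \geq T_n$,
\begin{align*}
  \frac{\enorm{\Fhat_t - F}_\infty}{\sqrt{(1/2) t^{-1} \log\log t}}
  \leq A_n \cdot \sqrt{2} \cdot \sqrt{\frac{\log\log(et) + C}{\log\log t}}.
\end{align*}
Since $\log\log(et)/\log\log t \to 1$ and $C/\log\log t \to 0$ as $t \to \infty$, taking $\limsup_{t \to \infty}$ on both sides gives a bound of $A_n \sqrt{2}$ on a full-measure set.

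Finally, letting $n \to \infty$ along the countable family yields $\limsup_{t\to\infty} \enorm{\Fhat_t - F}_\infty / \sqrt{(1/2) t^{-1} \log\log t} \leq \inf_n A_n \sqrt{2} = 1$ almost surely, which is the claimed inequality. There is no real obstacle to overcome here beyond remembering that a countable intersection of full-measure events is still full-measure, so that $A$ may be driven to its infimum despite \eqref{eq:empirical_io} requiring strict inequality $A > 1/\sqrt{2}$; this is also the only place where distribution-freeness (no continuity of $F$ required) implicitly matters, since \cref{th:uniform_lil} itself makes no assumption on $F$.
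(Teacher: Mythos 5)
Your proof is correct and takes essentially the same approach as the paper: both apply \eqref{eq:empirical_io} with $A$ slightly above $1/\sqrt{2}$, note that $\log\log(et)/\log\log t \to 1$ and $C/\log\log t \to 0$, and then let $A \downarrow 1/\sqrt{2}$. Your version is a touch more careful in making the countable-intersection step explicit via $A_n = 1/\sqrt{2} + 1/n$, whereas the paper simply says ``$\epsilon$ was arbitrary,'' but the argument is the same.
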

A comprehensive overview of results for the empirical process
$\sqrt{t}(\Fhat_t - F)$ can be found in \citet{shorack_empirical_1986}. We
mention in particular the law of the iterated logarithm derived by
\citet{smirnov_approximate_1944} (cf. \citealp{shorack_empirical_1986}, page 12,
equation (11)), which says that for continuous $F$, the bound
\eqref{eq:asymp_lil} holds with equality, seeing as the lower bound on the
$\limsup$ follows directly from the original LIL \citep{khintchine_uber_1924}
applied to $\Fhat_t(Q(1/2))$, an average of i.i.d. $\Bernoulli(1/2)$ random
variables. \Cref{th:uniform_lil} strengthens Smirnov's asymptotic upper bound to
one holding uniformly over time, without costing constant factors in the resulting asymptotic implication.

The following confidence sequence follows immediately from
\cref{th:uniform_lil}, as detailed in \cref{sec:proof_uniform_lil_quantiles}.
\begin{corollary}[Quantile-uniform confidence sequence I]
  \label{th:uniform_lil_quantiles}
  For any initial time $m \geq 1$ and $C\geq 7$,
  letting $g_t \defineas 0.85 \sqrt{t^{-1} (\log \log(et/m) + C)}$, we have
  \begin{equation}
  \small
    \P\eparen{\exists t \geq m, p \in (0,1):
      Q^-(p) < \Qhat^-_t\eparen{p - g_t}
      \text{ or } Q(p) > \Qhat_t\eparen{p + g_t}
    } \leq 1612 e^{-1.25C}.
      \label{eq:uniform_cs_ineq}
  \end{equation}
\end{corollary}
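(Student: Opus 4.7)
The plan is to deduce this corollary as a short deterministic consequence of \cref{th:uniform_lil}, via the standard passage from a uniform bound on $\enorm{\Fhat_t - F}_\infty$ to two-sided containment of population quantiles by sample quantiles, which is exactly the implication the text refers to as~\eqref{eq:Fgeq_Fleq}. The proof therefore splits cleanly into a probabilistic step, which is a one-line invocation of \cref{th:uniform_lil}, and a deterministic step, which converts the event $\enorm{\Fhat_t - F}_\infty \leq g_t$ into the desired two-sided quantile containment simultaneously over all $p \in (0,1)$.

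For the probabilistic step, applying \cref{th:uniform_lil} with the same $m$ and $C$ immediately yields $\P\eparen{\exists t \geq m: \enorm{\Fhat_t - F}_\infty > g_t} \leq 1612 e^{-1.25C}$. So it suffices to show deterministically that, whenever $\enorm{\Fhat_t - F}_\infty \leq g_t$ for a given $t$, one has $Q(p) \leq \Qhat_t(p+g_t)$ and $Q^-(p) \geq \Qhat^-_t(p-g_t)$ for every $p \in (0,1)$. For the upper containment I would just compare suprema via set inclusion: any $x$ with $F(x) \leq p$ also satisfies $\Fhat_t(x) \leq F(x) + g_t \leq p + g_t$, hence $\{x \in \Xcal : F(x) \leq p\} \subseteq \{x \in \Xcal : \Fhat_t(x) \leq p+g_t\}$, and taking suprema gives $Q(p) \leq \Qhat_t(p+g_t)$. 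The symmetric argument on the lower side: any $x$ with $\Fhat_t(x) < p - g_t$ satisfies $F(x) \leq \Fhat_t(x) + g_t < p$, hence $\{x \in \Xcal : \Fhat_t(x) < p-g_t\} \subseteq \{x \in \Xcal : F(x) < p\}$, and taking suprema gives $\Qhat^-_t(p-g_t) \leq Q^-(p)$. A union over $p \in (0,1)$ and $t \geq m$ on the high-probability event then yields the stated inequality.

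The main obstacle, such as it is, is simply the bookkeeping of pairing the asymmetric definitions correctly — keeping $Q$ with $\Qhat_t$ on the upper side (both suprema taken under weak inequality) and $Q^-$ with $\Qhat^-_t$ on the lower side (both suprema taken under strict inequality) — because this specific pairing is what lets the two set inclusions go through with no appeal to right-continuity of $F$ or absence of atoms. The boundary conventions $\Qhat_t(p) = \Qhat^-_t(p) = \inf \Xcal$ for $p < 0$ and $\sup \Xcal$ for $p > 1$ introduced in \cref{sec:fixed} absorb the edge cases where $p \pm g_t$ leaves $(0,1)$. All the real probabilistic content lives in \cref{th:uniform_lil}; the probability bound $1612 e^{-1.25 C}$ and the rate $g_t$ transfer verbatim.
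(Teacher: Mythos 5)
Your proof is correct and follows the same overall structure as the paper's: invoke \cref{th:uniform_lil} to get the high-probability event $\{\forall t \geq m : \enorm{\Fhat_t - F}_\infty \leq g_t\}$, then convert that event deterministically into two-sided quantile containment. The one place you genuinely diverge is in the deterministic step. The paper evaluates the CDF bound at the specific points $Q^-(p)$ and $Q(p)$, applies the inversion implications \eqref{eq:Fgeq} and \eqref{eq:Fmleq}, and — for the upper side — needs an extra argument taking limits from the left to pass from $\Fhat_t(x) \leq F(x) + g_t$ to $\Fhat^-_t(x) \leq F^-(x) + g_t$ before evaluating at $Q(p)$. You instead note that both $Q$ and $\Qhat_t$ are suprema over sets cut out by $F$, $\Fhat_t$ (with weak inequalities), and both $Q^-$ and $\Qhat^-_t$ are suprema over sets cut out by $F$, $\Fhat_t$ (with strict inequalities), so the uniform CDF bound gives two set inclusions directly, from which the quantile inequalities follow by monotonicity of $\sup$. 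This sidesteps the left-limit step entirely and is a modest but real simplification; your remark about the boundary conventions for $p \pm g_t \notin (0,1)$ is also correct and handled implicitly by the paper. Both routes buy the same thing, but yours requires less bookkeeping.
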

\noindent For example, take $m = 1$ and $C = 8.3$, so that
$g_t = 0.85 \sqrt{t^{-1} (\log \log (et) + 8.3)}$ and
\begin{align}
  \P\eparen{\exists t \geq 1, p \in (0,1):
      Q^-(p) < \Qhat^-_t\eparen{p - g_t}
      \text{ or } Q(p) > \Qhat_t\eparen{p + g_t}
    } \leq 0.05.
  \label{eq:uniform_lil_example}
\end{align}
\Cref{fig:bounds}(a) shows that \cref{th:uniform_lil_quantiles} yields an improvement over
other published methods based on
the fixed-time DKW inequality combined with a more naive union bound over time.

Note that $g_t$ does not depend on $p$, like the DKW-based fixed-time inequality
\eqref{eq:dkw_cs}, but this is not ideal for tail quantiles. We describe an
alternative ``double stitching'' method in \cref{th:uniform2} of
\cref{sec:double_stitching} which does include such dependence, and yields
improved performance for $p$ near zero or one. We demonstrate this performance
in \cref{fig:bounds} of the following section, graphically comparing all of our
bounds to visualize their tightness.

\section{Graphical comparison of bounds}\label{sec:graphical}

\begin{figure}[h!]
  \centering
  \begin{subfigure}[b]{\textwidth}
  \centering
    \includegraphics{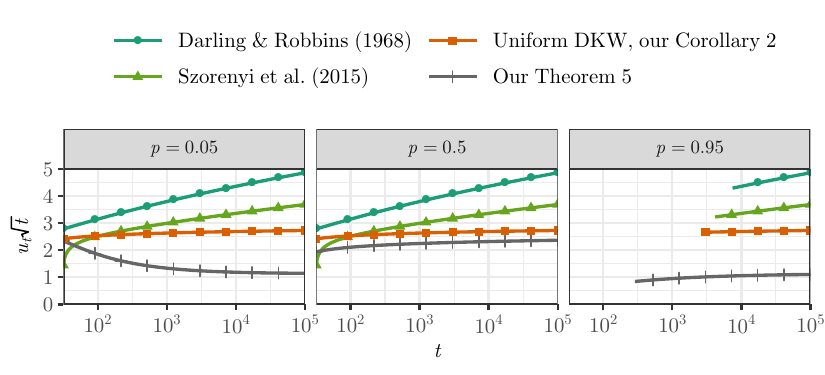}
    \caption{Confidence sequences uniform over both time and quantiles.}
  \end{subfigure}
  \begin{subfigure}[b]{\textwidth}
  \centering
    \includegraphics{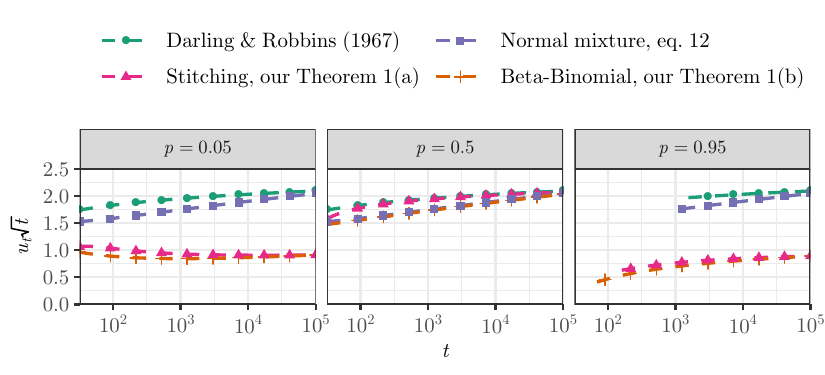}
    \caption{Confidence sequences uniform over time for a fixed quantile.}
  \end{subfigure}
  \caption{Plot of upper confidence bound radii $u_t$, normalized by $\sqrt{t}$
    to facilitate comparison. Each panel shows estimation radius for a different
    quantile, $p = 0.05$, 0.5, and 0.95, respectively. All bounds correspond to
    two-sided $\alpha = 0.05$. Upper row (a) shows confidence sequences valid
    uniformly over both time and quantiles. Lower row (b) shows confidence
    sequences valid uniformly over either time for a fixed quantile. In
    rightmost panels, lines start at the sample size for which the upper
    confidence bound becomes nontrivial. See \cref{sec:comparison_details} for
    details of each bound shown. \label{fig:bounds}}
\end{figure}

\Cref{fig:bounds} compares our four quantile confidence sequences with a variety
of alternatives from the literature. In each case, we show the upper confidence
bound radius $u_t$ which satisfies $\Qhat_t(p + u_t) \geq Q(p)$ with high
probability, uniformly over $t$, $p$, or both. \Cref{fig:bounds_full} in
\cref{sec:comparison_details} includes an additional plot with all bounds
together, along with details on all bounds displayed.

Among bounds holding uniformly over both time and quantiles,
\cref{th:uniform_lil_quantiles} and \cref{th:uniform2} yield the tightest bounds
outside of a brief time window near the start. The bound of \cref{th:uniform2}
gives $u_t$ growing at an $\Ocal(\sqrt{t^{-1} \log t})$ rate for all
$p \neq 1/2$, which is worse than that of \cref{th:uniform_lil_quantiles}, but
the superior constants of \cref{th:uniform2} and its dependence on $p$ give it
the advantage in the plotted range. \Citet{szorenyi_qualitative_2015} also give
a bound which grows as $\Ocal(\sqrt{t^{-1} \log t})$, but with worse constants
due to the application of a union bound over individual time steps $t \in \N$. A
similar technique was employed by \citet[Theorem 4]{darling_nonparametric_1968},
but using worse constants in the DKW bound, as their work preceded
\citet{massart_tight_1990}. Finally, \Cref{th:uniform_lil_quantiles} gives an
$\Ocal(\sqrt{t^{-1} \log \log t})$ bound which is especially useful for
theoretical work, as in our proof of \cref{th:qlucb}.

Among bounds holding uniformly over time for a fixed quantile, the beta-binomial
confidence sequence of \cref{th:fixed}(b) performs best over the plotted range,
slightly outperforming its iterated-logarithm counterpart from \cref{th:fixed}(a).
It is evident, though, that the iterated-logarithm bound will become tighter for large
enough $t$, thanks to its smaller asymptotic rate. \Citet[Section
2]{darling_confidence_1967} give a similar bound based on a sub-Gaussian uniform
boundary, which is only slightly worse than \cref{th:fixed}(a) for the median,
but substantially worse for $p$ near zero and one.

\begin{figure}[h!]
  \centering
  \includegraphics{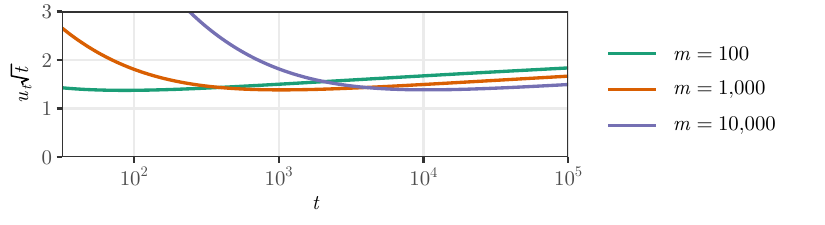}
  \caption{Plot of upper confidence bound radii $u_t$, normalized by $\sqrt{t}$
    to facilitate comparison, for the confidence sequence of \cref{th:fixed}(b)
    optimized for three different times $m = 100$, 1,000, and 10,000, according
    to \eqref{eq:bb_tuning}. \label{fig:tuning}}
\end{figure}

\Cref{fig:bounds} starts at $t = 32$ and all bounds have been tuned to optimize
for, or start at, $t = 32$, in order to ensure a fair comparison. For
\cref{th:fixed}(a), \cref{th:uniform_lil_quantiles}, and \cref{th:uniform2}, we
simply set $m = 32$. For \cref{th:fixed}(b), we suggest setting $r$ as follows
to optimize for time $t = m$:
\begin{align}
  \frac{r}{p(1-p)} = \frac{m}{-W_{-1}(-\alpha^2 / e) - 1} - 1
    \approx \frac{m}{2 \log(\alpha^{-1}) + \log \log (e \alpha^{-2})} - 1,
  \label{eq:bb_tuning}
\end{align}
where $W_{-1}(x)$ is the lower branch of the Lambert $W$ function, the most
negative real-valued solution in $z$ to $z e^z = x$, and the second expression
uses the asymptotic expansion of $W_{-1}$ near the origin
\citep{corless_lambert_1996}. See \citet[Proposition 3, Proposition 7, and
discussion therein]{howard_uniform_2018} for details on this
choice. \Cref{fig:tuning} illustrates the effect of this choice. The confidence
radius $u_t$ gets loose very quickly for values of $t$ lower than about $m/2$,
but grows quite slowly for values of $t > m$. For this reason, we suggest
setting $m$ around the smallest sample size at which inferences are desired.

\section{Quantile $\epsilon$-best-arm identification}\label{sec:bai}

As an application of our quantile confidence sequences, we present and analyze a
novel algorithm for identifying an arm with an approximately optimal quantile in
a multi-armed bandit setting. Our problem setup matches that of
\citet{szorenyi_qualitative_2015}, a slight modification of the standard
stochastic multi-armed bandit setting. We assume $K$ arms are available,
numbered $k = 1, \dots, K$, each corresponding to a distribution $F_k$ over the
sample space $\Xcal$. At each round, the algorithm chooses any arm $k$ to pull,
receiving an independent sample from the distribution $F_k$. Write $Q_k$ for the
upper quantile function on arm $k$,
$Q_k(p) \defineas \sup\brace{x \in \Xcal: F_k(x) \leq p}$, and $Q^-_k$ for the
lower quantile function. Fixing some $\pi \in (0,1)$, the goal is to stop as
soon as possible and, with probability at least $1 - \delta$, select an
$\epsilon$-optimal arm according to the following definition:
\begin{definition}
  For $\epsilon \in [0, 1-\pi)$, we say arm $k$ is \emph{$\epsilon$-optimal} if
  \begin{align}
    Q^-_k(\pi + \epsilon) \geq \max_{j \in [K]} Q^-_j(\pi - \epsilon).
  \end{align}
  Denote the set of $\epsilon$-optimal arms by
\[
\Acal_\epsilon \defineas \ebrace{k \in [K]: Q^-_k(\pi + \epsilon) \geq \max_{j \in [K]}
  Q^-_j(\pi - \epsilon)}.
  \]
\end{definition}

\Citet{kalyanakrishnan_pac_2012} introduced the LUCB algorithm for highest mean
identification, for which \citet{jamieson_best-arm_2014} gave a simplified
analysis in the $\epsilon=0$ case. Both are key inspirations for our QLUCB
(Quantile LUCB) algorithm and following sample complexity analysis. Despite being conceptually similar, our analysis faces significantly harder technical hurdles due to the nonlinearity and nonsmoothness of quantiles compared to the (sample and population) mean.  

QLUCB
proceeds in rounds indexed by $t$. At the start of round $t$, $N_{k,t}$ denotes
the number of observations from arm $k$. Write $X_{k,i}$ for the \ith
observation from arm $k$, and let $\Qhat_{k,t}(p)$ denote the upper sample
quantile function for arm $k$ at round $t$,
\begin{align}
  \Fhat_{k,t}(x) \defineas
    N_{k,t}^{-1} \sum_{i=1}^{N_{k,t}} \indicator{X_{k,i} \leq x}, \qquad
  \Qhat_{k,t}(p) \defineas \sup\ebrace{x \in \Xcal: \Fhat_{k,t}(x) \leq p},
\end{align}
with an analogous definition of $\Qhat^-_{k,t}$. QLUCB requires a sequence
$(l_n(p), u_n(p))$ which yields fixed-quantile confidence sequences, as in
\eqref{eq:fixed_cs}. Our analysis is based on confidence sequences given by
\eqref{eq:stitching_simple}, by using $\alpha \equiv 2\delta/K$; the factor of
two gives us one-sided instead of two-sided coverage at level $\delta/K$, which
is all that is needed. Let
\begin{align}
  f_t(p) = 1.5 \sqrt{p (1-p) \ell(t)} + 0.8 \ell(t),
  \text{ where }
  \ell(t) = \frac{1.4 \log\log(2.1 t) + \log(5K / \delta)}{t},
  \label{eq:qlucb_stitched}
\end{align}
similar to \eqref{eq:stitching_simple}, and let
$l_t(p) \defineas f_t(1 - p)$ and $u_t(p) \defineas f_t(p)$. We write
$L^{\pi+\epsilon}_{k,t}$ and $U^{\pi-\epsilon}_{k,t}$ for the lower and upper
confidence sequences on $Q_k(\pi+\epsilon)$ and $Q_k(\pi-\epsilon)$,
respectively:
\begin{align}
  L^{\pi+\epsilon}_{k,t} &\defineas
    \Qhat_{k,t}\eparen{\pi + \epsilon - l_{N_{k,t}}(\pi+\epsilon)}, \\
  U^{\pi-\epsilon}_{k,t} &\defineas
    \Qhat^-_{k,t}\eparen{\pi - \epsilon + u_{N_{k,t}}(\pi-\epsilon)}.
\end{align}

\begin{figure}

\rule{\textwidth}{.3pt}
\begin{algorithmic}
\State Input target quantile $\pi \in (0,1)$, approximation error
  $\epsilon \in [0, \pi \bmin (1-\pi))$, and error probability $\delta \in
  (0,1)$.
\State Sample each arm once, set $N_{k,1} = 1$ for all $k \in [K]$ and set
  $t = 1$.
\While{$L^{\pi+\epsilon}_{k,t} < \max_{j \neq k} U^{\pi-\epsilon}_{j,t}$ for all
       $k \in [K]$},
  \State Set
  $h_t \in \argmax_{k \in [K]} L^{\pi+\epsilon}_{k,t}$ and
  $\Lcal_t = \argmax_{k \in [K] \setminus h_t} U^{\pi-\epsilon}_{k,t}
  \subseteq [K]$.
  \State Sample all arms in  $\brace{h_t} \union \Lcal_t$.
  \State Set $N_{k,t+1}=N_{k,t} + 1 \text{ if } k \in \brace{h_t} \union
    \Lcal_t$, and $N_{k,t+1}=N_{k,t}$ otherwise.
  \State Increment $t \leftarrow t+1$.
\EndWhile
\State Output any $k$ such that
$L^{\pi+\epsilon}_{k,t} \geq \max_{j \neq k} U^{\pi-\epsilon}_{j,t}$.
\end{algorithmic}
\rule{\textwidth}{.3pt}

\caption{The QLUCB algorithm samples an arm with highest LCB (time-uniform
  lower confidence bound) for the $(\pi+\epsilon)$-quantile (called $h_t$)
  and the arm(s) with highest UCB (time-uniform upper confidence bound) for the
  $\pi$-quantile excluding the former (called $\Lcal_t$), as long as the
  aforementioned LCB and UCB overlap. \label{fig:qlucb}}
\end{figure}

QLUCB is described in \cref{fig:qlucb}.  Its sample complexity is determined by the
following quantities, which capture how difficult the problem is based on the
sub-optimality of the $\pi$-quantiles of each arm:
\begin{align}\label{eq:delta_defn}
  \Delta_k &\defineas \begin{cases}
    \sup\ebrace{\Delta \in [0, \pi \bmin (1 - \pi)]: Q^-_k(\pi + \Delta)
             \leq \max_{j \in [K]} Q^-_j(\pi - \Delta)},
      & \abs{\Acal_\epsilon} > 1 \text{ or } k \notin \Acal_\epsilon, \\
    \sup\ebrace{\Delta \in [0, \pi]:
      Q^-_k(\pi - \Delta) > \max_{j \neq k} Q^-_j(\pi + \Delta_j)},
      & \Acal_\epsilon = \brace{k}.
    \end{cases}
\end{align}
To understand \eqref{eq:delta_defn}, it is helpful to consider three cases in
turn:
\begin{itemize}
\item Consider first a suboptimal arm $k \notin \Acal_\epsilon$. Then $\Delta_k$
  is given by the first case and captures (informally) how much worse
  arm $k$ is than some better arm. When arm $k$ is sufficiently sampled relative
  to $\Delta_k$, then with high probabiltiy, the upper confidence bound on
  $Q^-_k(\pi-\epsilon)$ will be given by a sample quantile which lies below
  $Q^-_k(\pi + \Delta_k)$, and by the gap definition, this will be smaller than
  the lower confidence bound on $Q^-_j(\pi+\epsilon)$ for some other
  sufficiently-sampled arm $j$. Thus we will be confident that
  $Q^-_j(\pi+\epsilon) \geq Q^-_k(\pi-\epsilon)$, a necessary step to conclude
  that $j \in \Acal_\epsilon$.
\item Suppose there is a unique optimal arm, $\Acal_\epsilon = \brace{k^\star}$.
  Then $\Delta_{k^\star}$ is given by the second case and captures (again
  informally) how much \emph{better} arm $k^{\star}$ is than some ``best''
  suboptimal arm. When arm $k^\star$ is sufficiently sampled relative to
  $\Delta_{k^\star}$, then with high probability, the lower confidence bound on
  $Q^-_{k^\star}(\pi+\epsilon)$ will be given by a sample quantile which lies
  above $Q^-_{k^\star}(\pi-\Delta_{k^\star})$, and by the gap definition, this
  will be larger than upper confidence bound on $Q^-_j(\pi-\epsilon)$ for any
  other (suboptimal) sufficiently-sampled arm $j$. So when all arms are
  sufficiently sampled, we will be able to conclude that
  $Q^-_{k^\star}(\pi+\epsilon) \geq Q^-_j(\pi-\epsilon)$ for all suboptimal arms
  $j \neq k^\star$.
\item Suppose there are multiple optimal arms, $\abs{\Acal_\epsilon} > 1$. Then
  $\Delta_k$ is given by the first case and must be no larger than
  $\epsilon$. Because the gap only appears as $\epsilon \bmax \Delta_k$ in our
  sample complexity bound, the gap is irrelevant in this case. Informally, we
  must sample both arms sufficiently that we can determine they are ``within
  $\epsilon$ of each other'', regardless of the actual distance between their
  quantile functions.
\end{itemize}

Below, \Cref{th:qlucb} bounds the sample
complexity of QLUCB and shows that it successfully selects an $\epsilon$-optimal
arm, both with high probability. 

\begin{theorem}\label{th:qlucb}
  For any $\pi \in (0,1)$,  $\epsilon \in [0, \pi \bmin (1 - \pi))$, and
  $\delta \in (0,1)$, QLUCB stops with probability one, and chooses an
  $\epsilon$-optimal arm with probability at least $1 - \delta$. Furthermore,
  with probability at least $1 - 3\delta$, the total number of samples $T$ taken
  by QLUCB satisfies
  \begin{align}
    T = \Ocal\eparen{\sum_{k=1}^K (\epsilon \bmax \Delta_k)^{-2}
      \log\pfrac{K\eabs{\log\eparen{\epsilon \bmax \Delta_k}}}{\delta}}.
    \label{eq:qlucb_complexity}
  \end{align}
\end{theorem}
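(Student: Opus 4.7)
My plan is to follow the LUCB analysis template of~\citet{kalyanakrishnan_pac_2012} and~\citet{jamieson_best-arm_2014}, replacing their sub-Gaussian mean concentration with \cref{th:fixed}(a) applied per arm. The first step is to define the good event
\[
  \mathcal{E} \defineas \ebrace{\forall k \in [K],\ \forall t \in \N:\
    L^{\pi+\epsilon}_{k,t} \leq Q^-_k(\pi+\epsilon)\ \text{and}\
    U^{\pi-\epsilon}_{k,t} \geq Q_k(\pi-\epsilon)}.
\]
Because $\alpha = 2\delta/K$ in~\eqref{eq:qlucb_stitched} gives each one-sided bound coverage at level $\delta/K$, a union bound over the $K$ arms yields $\P(\mathcal{E}) \geq 1 - \delta$. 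On $\mathcal{E}$, if QLUCB halts and outputs $k$ at round $t$, the stopping criterion $L^{\pi+\epsilon}_{k,t} \geq U^{\pi-\epsilon}_{j,t}$ for all $j \neq k$, composed with the coverage inequalities, yields $Q^-_k(\pi+\epsilon) \geq Q^-_j(\pi-\epsilon)$ for every $j \neq k$, so $k \in \Acal_\epsilon$; this settles the $(1-\delta)$-correctness claim.

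For the sample complexity, I would invert $f_n$ from~\eqref{eq:qlucb_stitched} and define a per-arm threshold
\[
  \tau_k \defineas \min\ebrace{n \in \N:\ f_n(\pi+\epsilon) + f_n(\pi-\epsilon)
    \leq (\epsilon \bmax \Delta_k)/c}
\]
for a suitable absolute constant $c$; solving the sub-gamma inequality gives $\tau_k = \Ocal\eparen{(\epsilon \bmax \Delta_k)^{-2} \log\pfrac{K\eabs{\log(\epsilon \bmax \Delta_k)}}{\delta}}$, matching the summand in~\eqref{eq:qlucb_complexity}. The structural claim to prove, on $\mathcal{E}$, is that whenever every arm $k \in \brace{h_t} \cup \Lcal_t$ has been pulled at least $\tau_k$ times, the stopping criterion is already satisfied. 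This follows by case analysis according to the three bullets after~\eqref{eq:delta_defn}: for a suboptimal $k$, the definition of $\Delta_k$ combined with the width bound forces $U^{\pi-\epsilon}_{k,t}$ below $L^{\pi+\epsilon}_{j,t}$ for some already-well-sampled optimal $j$; in the unique-optimal case the same logic applies with $k^\star$ playing the opposite role; and when $\eabs{\Acal_\epsilon} > 1$ the $\epsilon$ slack alone suffices. Because each while-loop iteration pulls at least one member of $\brace{h_t} \cup \Lcal_t$, a pigeonhole argument bounds the total pull count by $\Ocal\eparen{\sum_k \tau_k}$ and simultaneously yields almost-sure termination. To turn the ``widths contract past $\tau_k$'' claim into a high-probability statement, one needs, beyond $\mathcal{E}$, a companion uniform bound of the form $U^{\pi-\epsilon}_{k,t} \leq Q^-_k(\pi - \epsilon + \text{small})$ (and its symmetric counterpart), which I would obtain via a second application of \cref{th:fixed} used in the reverse direction on an additional event of probability at least $1 - 2\delta$; intersecting with $\mathcal{E}$ gives the $1 - 3\delta$ in the stated bound.

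\textbf{Main obstacle.} The hardest step is the per-arm case analysis, because all of our confidence bounds are \emph{sample quantiles evaluated at shifted $p$-levels}, and $F$ or $Q$ may be arbitrarily flat or discontinuous, so one cannot measure distances between $U^{\pi-\epsilon}_{k,t}$ and $Q^-_k(\pi-\epsilon)$ through any Lipschitz property in $\Xcal$. My approach is to conduct the entire case split in $p$-space: use $\mathcal{E}$ together with its contraction counterpart to translate $L^{\pi+\epsilon}_{k,t}$ and $U^{\pi-\epsilon}_{k,t}$ into $Q^-_k$ evaluated at levels within $f_{N_{k,t}}(\cdot)$ of $\pi \pm \epsilon$, and then invoke the gap definition~\eqref{eq:delta_defn} directly, since it is itself written purely in terms of $Q^-$ at shifted levels. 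This sidesteps any need for smoothness of $F$, but requires juggling several levels simultaneously (arm-specific $\Delta_k$ for the arm under consideration and, in the unique-optimal case, the neighboring $\Delta_j$ of a competing arm), which is where most of the bookkeeping and almost all of the technical work will live.
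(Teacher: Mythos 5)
Your high-level architecture matches the paper's: a per-arm coverage event for correctness, per-arm sampling thresholds $\tau_k$ based on inverting the confidence-radius function, and a structural argument that QLUCB must stop once every relevant arm is sampled $\tau_k$ times. However, there are two genuine gaps in the middle of the argument, and two smaller bookkeeping slips.

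The central gap is in how you obtain the ``companion uniform bound.'' You propose getting $U^{\pi-\epsilon}_{k,t} \leq Q^-_k(\pi - \epsilon + \text{small})$ ``via a second application of \cref{th:fixed} used in the reverse direction.'' This cannot work: $U^{\pi-\epsilon}_{k,t} = \Qhat^-_{k,t}(\pi-\epsilon + u_{N_{k,t}}(\pi-\epsilon))$ evaluates the sample quantile at a \emph{data-dependent} level that moves with $N_{k,t}$, so you need a deviation bound that holds simultaneously over all $p\in(0,1)$, not just at a single fixed $p$ as \cref{th:fixed} provides. (Moreover, the ``reverse'' inequality $\Qhat^-_t(p + f_t(p)) \geq Q(p)$ from \cref{th:fixed} has the wrong sign.) The paper introduces a second event $\Bcal^2_t$ controlled by \cref{th:uniform_lil_quantiles} — a time- \emph{and} quantile-uniform bound — precisely for this purpose; the threshold $\tau_k$ in the paper is defined against $g_n + [u_n \bmax l_n]$ rather than $f_n$ alone, with $g_n$ the width from \cref{th:uniform_lil_quantiles}. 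Without a quantile-uniform ingredient your proposed $\tau_k$ definition does not control the quantity you need it to.

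The second gap is in the structural argument. Your phrase ``$U^{\pi-\epsilon}_{k,t}$ below $L^{\pi+\epsilon}_{j,t}$ for some already-well-sampled optimal $j$'' elides the real difficulty: QLUCB controls only the arms it chooses to pull, so you must \emph{prove} that the competitor $j$ is well-sampled when you need it. The paper handles this by associating to each arm $k$ with $\Delta_k \geq \epsilon$ a witness arm $g(k)$ satisfying $Q^-_k(\pi+\Delta_k) \leq Q_{g(k)}(\pi-\Delta_k)$, then showing (\cref{th:sample_g}) that once $k \in \Lcal_t$ has been sampled $\tau_k$ times, $g(k)$ must also be in $\{h_t\} \cup \Lcal_t$, which forces $g(k)$ to accumulate pulls. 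This chaining argument (and the accompanying book-keeping with $M_{k,t} = \sum_s \indicator{k \in \Lcal_s}$) is the heart of the proof and is entirely absent from your sketch.

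Two smaller issues: your event $\mathcal{E}$ imposes two one-sided conditions per arm, so the union bound gives $\P(\mathcal{E}) \geq 1 - 2\delta$, not $1-\delta$; the paper's correctness argument avoids this by needing only the upper bound on the single arm $k^\star$ and the lower bound on the others. And ``pigeonhole gives almost-sure termination'' is not quite right: the pigeonhole bound holds on the good event, while almost-sure termination requires a separate Borel--Cantelli argument (using that $\Bcal_t$ occurs only finitely often).
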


 A recent preprint by \Citet[Theorem 8]{kalogerias_best-arm_2020}
  gave a lower bound for the expected sample complexiy when $\epsilon = 0$ of
  the form $\Ocal(\Delta^{-2} \log \delta^{-1})$, where $\Delta$ is the minimum
  gap among suboptimal arms. Our bound matches the dependence on $\Delta$ up to
  a doubly-logarithmic factor, and includes an extra factor of $\log K$. We are
  not aware of a better upper or lower bound, thus removing the (small) $\log K$
  gap remains open. \Citet[Theorem 1]{david_pure_2016} give a lower bound when
  $\epsilon > 0$ of the form
  $\Ocal(\sum_k (\epsilon \bmax \widetilde{\Delta}_k)^{-2} \log \delta^{-1})$ 
  using a slightly different gap definition
  $\widetilde{\Delta}_k$. Our bound holds at $\epsilon=0$ in addition to
  $\epsilon > 0$, 
  Our QLUCB algorithm performs considerably better than
  existing algorithms in our experiments, including the correct scaling with $\pi$, and we hope that will motivate others
  to work towards fully matching upper and lower bounds.

\Cref{th:qlucb} is proved in \cref{sec:proof_qlucb}. In brief, the algorithm can
only stop with a sub-optimal arm if one of the confidence sequences
$L^{\pi+\epsilon}_{k,t}$ or $U^{\pi-\epsilon}_{k,t}$ fails to correctly cover
its target quantile, and \cref{th:fixed} bounds the probability of such an
error. Furthermore, \cref{th:uniform_lil} ensures that the confidence bounds
converge towards their target quantiles at an $\Ocal(\sqrt{t^{-1} \log \log t})$
rate, with high probability, so that the algorithm must stop after all arms have
been sufficiently sampled, and the allocation strategy given in the algorithm
ensures we achieve sufficient sampling with the desired sample complexity. While
our proof is inspired by \citet{kalyanakrishnan_pac_2012}
and \citet{jamieson_best-arm_2014} but significantly extends them. The fact that quantile confidence bounds are
determined by the random sample quantile function, rather than simply as
deterministic offsets from the sample mean, introduces new difficulties which
require novel techniques to overcome.

As an alternative to \eqref{eq:qlucb_stitched}, one may use a one-sided variant
of $\widetilde{f}_t$ from \eqref{eq:beta_bound} \citep[Proposition
7]{howard_exponential_2018}. As seen below, this alternative
performs well in practice, though the rate of the sample complexity bound
suffers slightly, replacing the $\log \abs{\log(\epsilon \bmax \Delta_k)}$ term
with $\abs{\log(\epsilon \bmax \Delta_k)}$.

\begin{figure}[h!]
  \centering
  \includegraphics{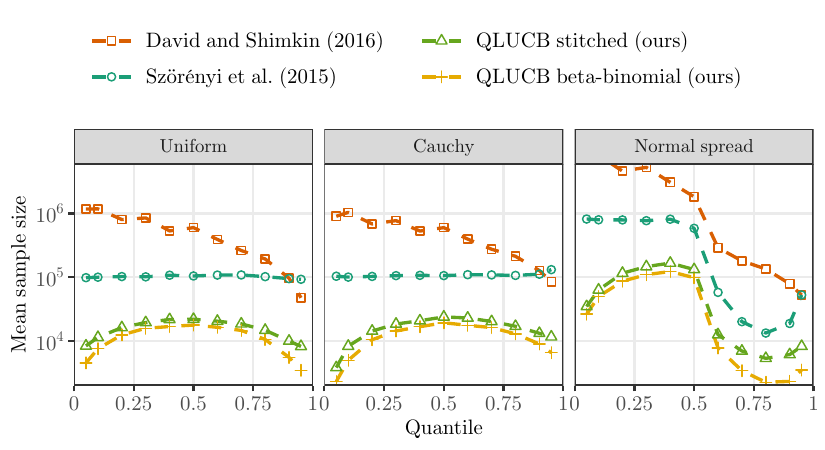}
  \caption{Average sample size for various quantile best-arm identification
    algorithms based on 64 simulation runs, with $\epsilon = 0.025$ and
    $\pi = 0.05, 0.1, 0.2, \dots, 0.8, 0.9, 0.95$. 
    Left panel shows results for
    arms with 
    uniform distributions on intervals of length one; middle panel
    shows arms with Cauchy distributions having unit scale; and right panel shows
    arms with standard normal distributions except for one, which has a standard
    deviation of two instead of one. 
    In this last case, the exceptional arm is
    best for quantiles above $0.53$, while for quantiles below $0.45$, the other
    arms are all $\epsilon$-optimal. 
    Plot includes Algorithm 2 of
    \citet{david_pure_2016}, Algorithm 1 of \citet{szorenyi_qualitative_2015},
    and our QLUCB algorithm based on two choices of confidence sequence: the
    stitched confidence sequence \eqref{eq:qlucb_stitched} based on
    \cref{th:fixed}(a) and a one-sided variant of the beta-binomial confidence
    sequence, \cref{th:fixed}(b).\label{fig:bai}}
\end{figure}

\Cref{fig:bai} shows mean sample size from simulations of the quantile
$\epsilon$-best-arm identification problem, for variants of QLUCB as well as the
QPAC algorithm of \citet{szorenyi_qualitative_2015} and the Doubled Max-Q
algorithm of \citet{david_pure_2016}. In all cases, we have $K = 10$ arms and
set $\epsilon = 0.025$, while $\pi$ ranges between $0.05$ and $0.95$. In the
left panel, nine arms have a uniform distribution on $[0,1]$, while one arm is
uniform on $[2\epsilon, 1 + 2\epsilon]$. In the middle panel, nine arms have
Cauchy distributions with location zero and unit scale, while one arm has
location $2(Q(\pi+\epsilon) - Q(\pi))$, where $Q(\cdot)$ is the Cauchy quantile
function.
This choice ensures that the one exceptional arm is the only
$\epsilon$-optimal arm. In the right panel, nine arms have $\Normal(0,1)$
distributions, while one arm has a $\Normal(0, 2^2)$ distribution. In this case,
the exceptional arm is the only $\epsilon$-optimal arm for $\pi$ larger than
approximately 0.53, while it is the only non-$\epsilon$-optimal arm for $\pi$
smaller than approximately 0.45. Between these values, all ten arms are
$\epsilon$-optimal.

The results show that QLUCB provides a substantial improvement on QPAC and
Doubled Max-Q, reducing mean sample size by a factor of at least five among the
cases considered, and often much more, when using the one-sided beta-binomial
confidence sequence. As \cref{fig:bai_full} in \cref{sec:bai_full} shows, most
of the improvement appears to be due to the tighter confidence sequence given by
\cref{th:fixed}, although the QLUCB sampling procedure also gives a noticeable
improvement. The stitched confidence sequence in QLUCB performs similarly to the
beta-binomial one, staying within a factor of three across all scenarios and
usually within a factor of 1.5.

\section{Proofs}\label{sec:proofs}

We make use of some results from
\citet{howard_exponential_2018,howard_uniform_2018}. We begin with the definitions
of sub-Bernoulli, sub-gamma, and sub-Gaussian processes and uniform boundaries:

\begin{definition}[Sub-$\psi$ condition]
\label{th:canonical_assumption}
Let $\smash{(S_t)_{t = 0}^\infty, (V_t)_{t = 0}^\infty}$ be real-valued processes
adapted to an underlying filtration $(\Fcal_t)_{t = 0}^\infty$ with $\smash{S_0=V_0=0}$
and~$\smash{V_t\geq0}$ for all $t$. For a function
$\psi: [0, \lambda_{\max}) \to \R$, we say
$(S_t)$ is \emph{sub-$\psi$ with variance process $(V_t)$} if, for each
$\lambda \in [0, \lambda_{\max})$, there exists a supermartingale
$(L_t(\lambda))_{t = 0}^\infty$ w.r.t. $(\Fcal_t)$ such that
$\E L_0(\lambda) \leq 1$ and
  \begin{align}
    \expebrace{\lambda S_t - \psi(\lambda) V_t } \leq L_t(\lambda)
    \text{ a.s.\ for all } t.
  \end{align}
\end{definition}

\begin{definition}\label{th:uniform_boundary}
  Given $\smash{\psi: [0, \lambda_{\max}) \to \R}$ and
  $\smash{l_0\geq 1}$, a function~$\smash{u:\R\to\R}$ is called a
  \emph{sub-$\psi$ uniform boundary} with crossing probability
  $\alpha$ if
  \begin{align}
    \P(\exists t \geq 1: S_t \geq u(V_t)) \leq \alpha
  \end{align}
  whenever $(S_t)$ is sub-$\psi$ with variance process $V_t$.
\end{definition}

\begin{definition}
  We use the following $\psi$ functions in what follows.
  \begin{enumerate}
  \item A \emph{sub-Bernoulli} process or boundary is sub-$\psi$ with
    \begin{align}\label{eq:psi-bern}
      \psi_{B,g,h}(\lambda) \defineas \frac{1}{gh} \log\pfrac{ge^{h\lambda} +
      he^{-g\lambda}}{g+h}
    \end{align}
    on $0 \leq \lambda < \infty$ for some parameters $g,h > 0$.
\item A \emph{sub-Gaussian} process or boundary is sub-$\psi$ with
  \begin{align}
    \psi_N(\lambda) \defineas \lambda^2/2
  \end{align}
  on $0 \leq \lambda < \infty$.
\item A \emph{sub-gamma} process or boundary is sub-$\psi$ with
  \begin{align}
    \psi_{G,c}(\lambda) \defineas \lambda^2 / (2 (1 - c\lambda))
  \end{align}
  on $0 \leq \lambda < 1 / (c \bmax 0)$ (taking $1/0 = \infty$) for some scale
  parameter $c \in \R$.
  \end{enumerate}
\end{definition}

The following facts will aid intuition for the true and empirical
quantile functions:
\begin{itemize}
\item $Q(p)$ and $\Qhat_t(p)$ are right-continuous, while $Q^-(p)$ and
  $\Qhat^-_t(p)$ are left-continuous.
\item $\Qhat_t(p)$ is the $\floor{tp} + 1$ order statistic of $X_1, \dots, X_t$,
  and $\Qhat^-_t(p)$ is the $\ceil{tp}$ order statistic.
\item $Q^-(p) \leq Q(p)$, and $Q^-(p) = Q(p)$ unless the $p$-quantile is
  ambiguous, that is, $F(x) = F(x') = p$ for some $x \neq x'$.
\item $\Qhat^-_t(p) \leq \Qhat_t(p)$, and $\Qhat^-_t(p) = \Qhat_t(p)$ for all
  $p \notin \brace{1/t, 2/t, \dots, (t - 1)/t}$.
\item $Q^-$ is sometimes denoted $F^{-1}$ (e.g.,
  \citealp{shorack_empirical_1986}, p. 3, equation (13)). Our
  notation seems to improve clarity in the case of ambiguous quantiles.
\end{itemize}

The functions $\Qhat^-_t$ and $\Qhat_t$ act as ``inverses'' for $\Fhat_t$ and
$\Fhat^-_t$ in the following sense: for any $x \in \Xcal$ and any $p \in \R$, we
have
\begin{align}
  \Fhat_t(x) \geq p \enskip &\Leftrightarrow \enskip x \geq \Qhat^-_t(p), &
  \Fhat_t(x) \leq p \enskip &\implies \enskip x \leq \Qhat_t(p),
    \label{eq:Fgeq_Fleq} \\
  \Fhat_t(x) > p \enskip &\implies \enskip x \geq \Qhat_t(p),
  \qquad \text{and} &
  \Fhat^-_t(x) < p \enskip &\implies \enskip x \leq \Qhat^-_t(p).
    \label{eq:Fgt_Fmlt}
\end{align}

Our strategy in the proof of \cref{th:fixed} will
be to construct a martingale $(S_t(p))_{t=1}^\infty$ which almost surely satisfies
\begin{align}
  \Fhat^-_t(Q(p)) \leq p + S_t(p) / t \leq \Fhat_t(Q^-(p))
\end{align}
for all $t \in \N$. Applying a time-uniform concentration inequality to
bound the deviations of $(S_t(p))$, we obtain a time-uniform lower bound
$\Fhat_t(Q^-(p)) > p - l_t(p)$ and a time-uniform upper bound
$\Fhat^-_t(Q(p)) < p + u_t(p)$, both of which hold with high probability. We
then invoke the implications in \eqref{eq:Fgt_Fmlt} to obtain a confidence
sequence for $Q^-(p),Q(p)$ of the form \eqref{eq:fixed_cs}.

The martingale $(S_t(p))$ is defined as follows. Let
\begin{align}
  \pi(p) \defineas
  \begin{cases}
    0, & F(Q(p)) = F^-(Q(p)), \\
    \frac{p - F^-(Q(p))}{F(Q(p)) - F^-(Q(p))}, & F(Q(p)) > F^-(Q(p)),
  \end{cases} \label{eq:pi_defn}
\end{align}
noting that $\pi(p) \in [0,1]$ since $F^-(Q(p)) \leq p \leq F(Q(p))$. Now define
$S_0(p) = 0$ and
\begin{align}
  S_t(p) \defineas \sum_{i=1}^t
    \ebracket{\indicator{X_i < Q(p)} + \pi(p) \indicator{X_i = Q(p)} - p}
  \label{eq:St_def}
\end{align}
for $t \in \N$. When $F(Q(p)) = F^-(Q(p))$, so that $\P(X_1 = Q(p)) = 0$, we
have $\Fhat^-_t(Q(p)) = p + S_t(p)/t = \Fhat_t(Q(p))$ for all $t \in \N$
a.s. When $F(Q(p)) > F^-(Q(p))$, we are still assured
$\Fhat^-_t(Q(p)) \leq p + S_t(p)/t \leq \Fhat_t(Q(p))$ for all $t \in \N$, as
desired. In either case, the increments
$\Delta S_t(p) \defineas S_t(p) - S_{t-1}(p)$ are i.i.d., mean-zero, and bounded
in $[-p,1-p]$ for all $t \in \N$. This key fact allows us to bound the
deviations of $S_t(p)$ using time-uniform concentration inequalities for
Bernoulli random walks.

\subsection{Proof of \cref{th:fixed}}\label{sec:proof_fixed}

As defined in \eqref{eq:St_def}, the i.i.d.\ increments of the process
$(S_t(p))_{t=1}^\infty$,
\begin{align}
  S_t(p) - S_{t-1}(p)
  = \indicator{X_i < Q(p)} + \pi(p) \indicator{X_i = Q(p)} - p,
\end{align}
are mean-zero and bounded in $[-p, 1 - p]$. Fact 1(b) and Lemma 2 of
\citet{howard_exponential_2018} verify that the process $(S_t(p))$ is a
sub-Bernoulli process~\eqref{eq:psi-bern} with range parameters $g = p, h = 1 - p$.
Then, defining the intrinsic variance process $V_t \defineas p (1 - p) t$ and
\begin{align}
  \psi(\lambda) \defineas
  \frac{1}{p(1-p)} \log\eparen{pe^{(1-p)\lambda} + (1-p)e^{-p\lambda}},
  \label{eq:bernoulli_psi}
\end{align}
it is straightforward to verify that the process
$\eparen{\expebrace{\lambda S_t(p) - \psi(\lambda) V_t}}_{t=1}^\infty$ is a
supermartingale for all $\lambda \geq 0$.  We now construct time-uniform bounds
for the process $(S_t(p))$ based on the above property:
\begin{itemize}
\item Using the fact that a sub-Bernoulli process with range parameters $g = p$
  and $h = 1 - p$ is also sub-gamma with scale $c = (1 - 2p)/3$, the sequence
  $f_t(p)$ is based on the ``polynomial stitched boundary'' \citep[Proposition
  1, equation 6, and Theorem 1]{howard_uniform_2018}. That result allows us to
  fix any $\eta > 1$, $s > 1$, which control the shape of the confidence radius
  over time, and $m \geq 1$, the time at which the confidence sequence starts to
  be tight, and obtain $f_t(p) = \Scal_p(t \bmax m) / t$ with
  \begin{align}
    \Scal_p(t) \defineas \sqrt{k_1^2 p (1 - p) t \ell(t)
    + k_2^2 c_p^2 \ell^2(t)} + c_p k_2 \ell(t),
    \quad \text{where } \begin{cases}
      \ell(t)\defineas s \log \log\pfrac{\eta t}{m}
      + \log\pfrac{2\zeta(s)}{\alpha \log^s \eta} \\
      k_1 \defineas (\eta^{1/4} + \eta^{-1/4}) / \sqrt{2} \\
      k_2 \defineas (\sqrt{\eta} + 1) / 2 \\
      c_p \defineas (1 - 2p) / 3.
    \end{cases}
    \label{eq:stitching}
  \end{align}
  The special case given in \cref{eq:stitching_simple} follows from the choices
  $\eta = 2.04$, $s = 1.4$, and $m = 1$. Then
  \begin{align}
    \P(\exists t \in \N: S_t(p) \geq t f_t(p)) \leq \alpha/2.
  \end{align}
  If we replace $(S_t(p))$ with $(-S_t(p))$, which is sub-Bernoulli with range
  parameters $g=1-p$ and $h=p$ and therefore sub-gamma with scale $c = 2p-1$, we
  obtain
  \begin{align}
    \P(\exists t \in \N: S_t(p) \leq -t f_t(1-p)) \leq \alpha/2.
  \end{align}
  A union bound yields the two-sided result
  \begin{align}
    \P\eparen{\exists t \in \N: t^{-1} S_t(p) \notin \eparen{-f_t(1-p), f_t(p)}}
    \leq \alpha. \label{eq:f_property}
  \end{align}
\item The sequence $\widetilde{f}_t(p)$ is based on a two-sided beta-binomial
  mixture boundary drawn from Proposition 7 of
  \citet{howard_uniform_2018}. Below, we denote the beta function by
  $B(a,b) = \int_0^1 u^{a-1} (1-u)^{b-1} \d u$. Fix any $r > 0$, a tuning
  parameter, and define
  \begin{align}
  \widetilde{f}_t(p) &~\defineas~
    \frac{1}{t} \,
    \sup\ebrace{s \in \left[0, \frac{r + p(1-p)t}{p}\right) :
                M_{p,r}(s, p (1 - p) t) < \frac{1}{\alpha}},
                   \label{eq:beta_bound} \\
  \text{ where ~}~ M_{p,r}(s,v) &~\defineas~
    \frac{1}{p^{v/(1 - p) + s} (1 - p)^{v/p - s}} \cdot
    \frac{B\eparen{\frac{r+v}{p}-s, \frac{r+v}{1 - p}+s}}
         {B\eparen{\frac{r}{p}, \frac{r}{1 - p}}}.
      \label{eq:beta_mixture}
  \end{align}
  Then we have
  \begin{align}
    \P\eparen{\exists t \in \N:
      t^{-1} S_t(p) \notin \eparen{-\widetilde{f}_t(1-p), \widetilde{f}_t(p)}}
    \leq 1 - \alpha. \label{eq:f_tilde_property}
  \end{align}
\end{itemize}
By construction, $\Fhat^-_t(Q(p)) \leq p + S_t(p)/t \leq \Fhat_t(Q^-(p))$ for
all $t$, so that with \eqref{eq:f_property} we have
\begin{align}
  \P\eparen{\exists t \in \N:
    \Fhat_t(Q^-(p)) \leq p - f_t(1-p) \text{ or }
    \Fhat^-_t(Q(p)) \geq p + f_t(p)} \leq \alpha.
  \label{eq:generic_step_1}
\end{align}
We now use the implications in \eqref{eq:Fgt_Fmlt} to conclude
\begin{align}
  \P\eparen{\exists t \in \N:
    Q^-(p) < \Qhat_t\eparen{p - f_t(1-p)} \text{ or }
    Q(p) > \Qhat^-_t\eparen{p + f_t(p)}} \leq \alpha,
  \label{eq:generic_step_2}
\end{align}
which is the desired conclusion. The same conclusion follows for $\widetilde{f}$
by using \eqref{eq:f_tilde_property} in place of \eqref{eq:f_property}. \qed

We remark that \eqref{eq:generic_step_2} implies that the running intersection
of confidence intervals also yields a valid confidence sequence: for any
$q \in [Q^-(p), Q(p)]$, we have
\begin{align}
 \P\eparen{
      \forall t \in \N:
      q \in \ebracket{\max_{s \leq t} \Qhat_s\eparen{p - f_s(1-p)},
                      \min_{s \leq t} \Qhat^-_s\eparen{p + f_s(p)}}
    } &\geq 1-\alpha.
\end{align}
This intersection yields smaller confidence intervals. However, on the miscoverage event of
probability $\alpha$, or if the assumption of i.i.d.\
observations is violated, then the intersection
method may lead to an empty confidence interval. This can be viewed as a benefit, as an empty
confidence interval is evidence of problematic assumptions. In such cases,
however, it may also lead to misleadingly small, but not empty, confidence
intervals, which may be harder to detect. 

\subsection{Proof of \cref{th:uniform_lil}}\label{sec:proof_uniform_lil}

We prove the following more general result:
\begin{theorem}\label{th:uniform_lil_detailed}
  For any $m \geq 1$, $A > 1/\sqrt{2}$, and $C > 0$, we have
  \begin{multline}
    \P\eparen{\exists t \geq m:
      \enorm{\Fhat_t - F}_\infty > A \sqrt{\frac{\log \log(et/m) + C}{t}}} \\
    \leq \alpha_{A,C}
      \defineas \inf_{\substack{\eta \in (1, 2A^2), \\ \gamma(A,C,\eta) > 1}}
      4e^{-\gamma^2(A,C,\eta) C}
      \eparen{1 + \frac{1}{(\gamma^2(A,C,\eta) - 1) \log \eta}},
    \label{eq:uniform_lil_ineq}
  \end{multline}
  where
  $\gamma(A,C,\eta) \defineas \sqrt{2/\eta} \eparen{A - \sqrt{2(\eta-1) /
      C}}$. Furthermore,
  \begin{align}
    \P\eparen{\enorm{\Fhat_t - F}_\infty
      > A \sqrt{t^{-1} (\log \log(et/m) + C)} \text{ infinitely often}} = 0.
    \label{eq:empirical_io}
  \end{align}
\end{theorem}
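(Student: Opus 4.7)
The plan is to combine a Shorack-James-type maximal inequality for the empirical process, which strengthens the fixed-sample DKW bound to the running supremum within a bounded time block, with a geometric peeling (``stitching'') argument over an unbounded time horizon. The key preliminary fact to invoke is that there exists a constant (essentially $2$, by an argument in \citet{shorack_empirical_1986}) for which
\[
\P\eparen{\sup_{t_0 \leq t \leq \eta t_0} \enorm{\Fhat_t - F}_\infty \geq \lambda}
\leq 2 e^{-2 t_0 \lambda^2}
\]
holds for all $\lambda$ above a mild threshold, extending Massart's DKW bound to a one-sided maximal statement over a bounded epoch. The exponent $-2 t_0 \lambda^2$ (with no slack in the leading constant) is what ultimately yields the sharp $1/\sqrt{2}$ in the asymptotic LIL.

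Next I would define geometric epochs $I_k \defineas [m\eta^k, m\eta^{k+1})$ indexed by $k \geq 0$, for a tuning parameter $\eta > 1$. Inside $I_k$, the target threshold satisfies
\[
A \sqrt{\frac{\log\log(et/m) + C}{t}}
\geq \frac{A}{\sqrt{\eta}} \sqrt{\frac{\log\log(e\eta^k) + C}{m\eta^k}},
\]
since $t \leq \eta \cdot m\eta^k$. Applying the maximal inequality to $I_k$ with $t_0 = m\eta^k$ and $\lambda$ equal to the right-hand side shows that the miscoverage event restricted to $I_k$ has probability at most roughly $2 \exp\{-(2A^2/\eta)(\log\log(e\eta^k) + C)\}$. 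The correction term $\sqrt{2(\eta-1)/C}$ inside $\gamma(A,C,\eta)$ comes from more carefully controlling the approximation $t \approx m\eta^k$ by keeping $t$ explicit; this refinement is what allows $\gamma$ to approach $\sqrt{2/\eta} \cdot A$ as $C \to \infty$.

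A union bound over $k \geq 0$ then produces a geometric-like series
\[
\sum_{k\ge 0} 2 \exp\ebrace{-\gamma^2(A,C,\eta)\eparen{\log(1 + k \log\eta) + C}},
\]
which, bounded by comparison to an integral, gives exactly the stated
\[
4 e^{-\gamma^2 C}\eparen{1 + \frac{1}{(\gamma^2 - 1)\log\eta}}.
\]
The constraint $\gamma > 1$ is what forces convergence of the sum over epochs; the constraint $\eta < 2A^2$ guarantees that the feasible set for the infimum is nonempty (it keeps $\gamma > 0$ in the limit $C \to \infty$). Optimizing over $\eta$ in this feasible set yields $\alpha_{A,C}$.

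For the infinitely often statement \eqref{eq:empirical_io}, the per-epoch bounds above are summable in $k$ whenever $\gamma > 1$, which for fixed $A > 1/\sqrt{2}$ can be arranged by choosing $\eta$ just above $1$ and taking $C$ large enough; the Borel--Cantelli lemma then implies only finitely many epochs contain a violation, so almost surely the deviation holds for only finitely many $t$. The main obstacle is the bookkeeping to justify the precise form of $\gamma(A,C,\eta)$, in particular verifying that the maximal inequality can be applied with DKW-sharp constant $2$ (no extra Ottaviani or exponentiation slack), since this is what carries Smirnov's sharp constant $1/\sqrt{2}$ through to \cref{th:upper_lil}; a secondary difficulty is passing from the discrete union over epochs to the closed-form integral comparison that yields the clean $1/((\gamma^2 - 1)\log\eta)$ factor.
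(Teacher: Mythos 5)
You have the right skeleton --- geometric epochs, a per-epoch maximal inequality, a union bound with integral comparison, and Borel--Cantelli for the infinitely-often claim --- and this is indeed the paper's route. The gap is in your key preliminary step: the maximal inequality you invoke does not hold ``with no slack,'' and as a result your account of where the correction term in $\gamma(A,C,\eta)$ comes from is wrong. The inequality actually used (\citealp{shorack_empirical_1986}, Inequality 13.2.1, due to \citealp{james_functional_1975}) has the form: for $\beta \in (0,1)$ and $\eta > 1$ with $(1-\beta)^2\lambda^2 \geq 2(\eta - 1)$, and for integers $n'' / n' \leq \eta$,
\[
\P\eparen{\max_{n' \leq t \leq n''} \enorm{\sqrt{t}(\Fhat_t - F)_{\pm}}_\infty > \lambda}
\leq 2\P\eparen{\enorm{\sqrt{n''}(\Fhat_{n''} - F)_{\pm}}_\infty > \frac{\beta\lambda}{\sqrt{\eta}}}.
\]
Only after reducing the running maximum to the fixed endpoint $n''$ does one apply Massart's one-sided DKW bound, and the result is $2\exp\{-2\beta^2\lambda^2/\eta\}$, not $2\exp\{-2\lambda^2\}$. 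The factor $\beta < 1$ is genuine, unavoidable slack in the maximal step, and it is exactly this $\beta$ that produces the $\sqrt{2(\eta-1)/C}$ correction inside $\gamma$.

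Your explanation --- that the correction arises from ``more carefully controlling the approximation $t \approx m\eta^k$'' --- does not match the mechanism. The paper writes the per-epoch threshold as $A\sqrt{(\log\log(e\eta^k) + C)/t}$, which is a constant threshold $\lambda = A\sqrt{\log\log(e\eta^k) + C}$ for the normalized process $\sqrt{t}(\Fhat_t - F)$; there is no residual approximation of $t$ by $m\eta^k$ left to refine. The $\sqrt{\eta}$ in $\gamma$ already absorbs the full epoch width via the lemma's $\beta\lambda/\sqrt{\eta}$ reduction. Choosing $\beta = 1 - \sqrt{2(\eta - 1)/(A^2 C)}$ (which satisfies the lemma's side condition for every $k \geq 0$ since $\lambda^2 \geq A^2 C$) yields $\gamma = \beta A\sqrt{2/\eta} = \sqrt{2/\eta}\bigl(A - \sqrt{2(\eta-1)/C}\bigr)$, matching the statement. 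Under your slack-free claim you would instead obtain the strictly larger $\gamma = A\sqrt{2/\eta}$ with no correction term, which is not achievable. A minor accounting point in the same direction: the per-epoch bound is $4\exp\{-\gamma^2(\log(1 + k\log\eta) + C)\}$ because the $+$ and $-$ events each contribute $2\exp\{\cdot\}$, so the sum you write starting from $2\exp\{\cdot\}$ would recover only half the stated constant.
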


To better understand the quantity $\alpha_{A,C}$, note that any value of
$\eta \in (1, 2A^2)$ satisfying $\gamma(A,C,\eta)$ gives an upper bound for
$\alpha_{A,C}$. For fixed $A$, any value $\eta \in (1, 2A^2)$ is feasible for
sufficiently large $C$, while for fixed $C$, any value $\eta > 1$ is feasible
for sufficiently large $A$. In either case,
$\gamma^2(A,C,\eta) \sim 2A^2 / \eta$ as $A \to \infty$ or $C \to \infty$, which
yields $\log \alpha_{A,C} = \Ocal(-A^2 C)$, as may be expected from a typical
exponential concentration bound.

To obtain the special case stated in in \cref{th:uniform_lil}, take $A = 0.85$
and any $C \geq 7$, and observe that the value $\eta = 1.01$ ensures that
$\gamma^2(0.85,C,1.01) \geq 1.25$ and is thus feasible for the right-hand side
of \eqref{eq:uniform_lil_ineq}.

Our proof is based on inequality 13.2.1 of
\citet[p. 511]{shorack_empirical_1986} (cf. \citealp{james_functional_1975}). We
repeat the following special case; here $(\cdot)_{\pm}$ denotes that we may take
either the positive part of $(\cdot)$ on both sides of the inequality, or the
negative part on both sides.
\begin{lemma}[\citealp{shorack_empirical_1986}, Inequality 13.2.1]
  \label{th:empirical_maximal}
  Fix $\lambda > 0$, $\beta \in (0, 1)$, and $\eta > 1$ satisfying
  $(1-\beta)^2 \lambda^2 \geq 2(\eta - 1)$. Then for all integers $n' \leq n''$
  having $n'' / n' \leq \eta$, we have
  \begin{align}
    \P\eparen{
      \max_{n' \leq t \leq n''} \enorm{\sqrt{t} (\Fhat_t - F)_{\pm}}_\infty
      > \lambda}
    \leq 2\P\eparen{\enorm{\sqrt{n''}(\Fhat_{n''} - F)_{\pm}}_\infty
           > \frac{\beta \lambda}{\sqrt{\eta}}}.
  \end{align}
\end{lemma}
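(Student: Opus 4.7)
The plan is a peeling (or ``stitching'') argument: I would partition $[m, \infty)$ into geometric epochs $I_k = [m \eta^{k-1}, m\eta^k)$ for $k \geq 1$, where $\eta \in (1, 2A^2)$ is chosen so that $\gamma(A,C,\eta) > 1$ (this interval is nonempty because $A > 1/\sqrt{2}$); write $\gamma = \gamma(A,C,\eta)$ for brevity. Since the threshold $A\sqrt{(\log\log(et/m) + C)/t}$ yields a minimum ``$\sqrt{t}$-scaled'' threshold of $\lambda_k \defineas A\sqrt{\log\log(e\eta^{k-1}) + C}$ on the $k$-th epoch, any violation on $I_k$ forces $\max_{t \in I_k} \sqrt{t}\,\|\Fhat_t - F\|_\infty > \lambda_k$. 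I would bound this maximum on each epoch by combining Lemma \ref{th:empirical_maximal} with the one-sided DKW inequality applied at the epoch's right endpoint.

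For each epoch, a union bound over the two signs in $(\cdot)_\pm$ (factor $2$) reduces the problem to one-sided deviations. Taking integer endpoints $n'_k, n''_k$ inside $I_k$ with $n''_k / n'_k \leq \eta$, I would apply Lemma \ref{th:empirical_maximal} with $\beta = 1 - \sqrt{2(\eta-1)/(A^2 C)}$; since $\lambda_k^2 \geq A^2 C$ for every $k \geq 1$, the hypothesis $(1-\beta)^2 \lambda_k^2 \geq 2(\eta-1)$ holds uniformly in $k$. The lemma contributes another factor of $2$ and reduces the bound to $\P(\|\sqrt{n''_k}(\Fhat_{n''_k} - F)_\pm\|_\infty > \beta \lambda_k / \sqrt{\eta})$, which the one-sided DKW inequality bounds by $\exp(-2 \beta^2 \lambda_k^2 / \eta)$. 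A direct computation confirms $2\beta^2 A^2 / \eta = \gamma^2$, so the $k$-th epoch contributes at most $\exp(-\gamma^2(\log\log(e\eta^{k-1}) + C))$. Summing over both signs and over $k \geq 1$ yields
\[
4 e^{-\gamma^2 C} \sum_{j=0}^\infty (1 + j \log\eta)^{-\gamma^2} \leq 4 e^{-\gamma^2 C}\left(1 + \frac{1}{(\gamma^2 - 1)\log\eta}\right),
\]
where the inequality uses $\gamma^2 > 1$ together with a standard integral comparison. Taking the infimum over admissible $\eta$ then produces the stated $\alpha_{A,C}$.

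For the almost-sure conclusion \eqref{eq:empirical_io}, I would fix any $A' \in (1/\sqrt{2}, A)$ and observe that for any $C^* > 0$ the inequality $A^2(\log\log(et/m) + C) \geq (A')^2(\log\log(et/m) + C^*)$ holds for all sufficiently large $t$, since the excess $(A^2 - (A')^2)\log\log(et/m)$ diverges. Consequently the event in \eqref{eq:empirical_io} with parameters $(A, C)$ is eventually contained in the analogous event with parameters $(A', C^*)$, whose probability is at most $\alpha_{A', C^*}$ by the first part of the theorem. Fixing any $\eta \in (1, 2(A')^2)$ (nonempty since $A' > 1/\sqrt{2}$) and letting $C^* \to \infty$, one has $\gamma(A', C^*, \eta) \to A'\sqrt{2/\eta} > 1$ and $\alpha_{A', C^*} \to 0$ exponentially, so the i.o. event has probability zero.

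The main obstacle I anticipate is bookkeeping with the integer epoch endpoints: Lemma \ref{th:empirical_maximal} requires the strict ratio bound $n''_k/n'_k \leq \eta$ uniformly in $k$, which may force a small inflation of $\eta$ or a shift of the starting time to accommodate integer rounding, and the epoch-by-epoch constants must be re-checked under this adjustment. A secondary subtlety is that the choice of $\beta$ must be made uniform in $k$ (so that a single application of the maximal inequality suffices per epoch), and positivity of $\beta$ must be verified -- but this is automatic once $\gamma > 1$, since $\gamma > 1$ strictly implies $\beta > \sqrt{\eta/2}/A > 0$.
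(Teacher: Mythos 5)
Your proposal does not prove the statement in question. The statement is the maximal inequality itself (Lemma \ref{th:empirical_maximal}, Inequality 13.2.1 of Shorack and Wellner): a bound on $\P\bigl(\max_{n' \leq t \leq n''} \|\sqrt{t}(\Fhat_t - F)_{\pm}\|_\infty > \lambda\bigr)$ in terms of the deviation probability at the single terminal time $n''$, valid whenever $(1-\beta)^2\lambda^2 \geq 2(\eta-1)$ and $n''/n' \leq \eta$. What you wrote is instead a peeling argument that \emph{invokes} this lemma, together with the one-sided DKW inequality, to establish the time-uniform LIL bound of \cref{th:uniform_lil_detailed}; indeed your argument tracks the paper's proof of that theorem almost step for step (same epochs, same $\beta = 1 - \sqrt{2(\eta-1)/(A^2C)}$, same identity $2\beta^2 A^2/\eta = \gamma^2$, same integral comparison, same Borel--Cantelli remark). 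As a proof of the assigned lemma it is circular: the lemma appears as a black-box ingredient in your own argument, and nothing in your proposal supplies a proof of it.

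A correct proof of Lemma \ref{th:empirical_maximal} requires a genuinely different idea, of Ottaviani/L\'evy type. Introduce the first time $\tau \in [n', n'']$ at which $\|\sqrt{t}(\Fhat_t - F)_{\pm}\|_\infty$ exceeds $\lambda$, and on $\{\tau = t\}$ fix a (random, $\Fcal_t$-measurable) point $x$ witnessing the crossing, so that $t(\Fhat_t(x) - F(x))_{\pm} > \lambda\sqrt{t} \geq \lambda\sqrt{n'}$. Conditionally on $\Fcal_t$, the increment $n''(\Fhat_{n''}(x) - F(x)) - t(\Fhat_t(x) - F(x))$ is a centered sum of at most $n'' - t \leq (\eta - 1)n'$ i.i.d.\ bounded terms, so Chebyshev's inequality bounds the conditional probability that this increment drops below $-(1-\beta)\lambda\sqrt{n'}$ by a quantity of order $(\eta-1)/\bigl((1-\beta)^2\lambda^2\bigr)$, which the hypothesis $(1-\beta)^2\lambda^2 \geq 2(\eta-1)$ forces to be at most $1/2$. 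Hence, conditionally on each $\{\tau = t\}$, with probability at least $1/2$ one retains $\sqrt{n''}(\Fhat_{n''}(x) - F(x))_{\pm} > \beta\lambda\sqrt{n'/n''} \geq \beta\lambda/\sqrt{\eta}$, and summing over $t$ yields the factor $2$ in the stated bound. None of this stopping-time/conditional second-moment machinery appears in your proposal, so the assigned statement remains unproved; the hypothesis $(1-\beta)^2\lambda^2 \geq 2(\eta-1)$, which you only use as a bookkeeping check for admissibility of $\beta$, is in fact the heart of the lemma.
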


Now fix any $\eta \in (1, 2A^2)$ satisfying $\gamma(A,C,\eta) > 1$, and for
$k = 0, 1, \dots$, define the event
\begin{align}
  \Acal_k^{\pm} \defineas \ebrace{\exists t \in [m \eta^k, m \eta^{k+1}):
    \enorm{(\Fhat_t - F)_{\pm}}_\infty
    > A \sqrt{\frac{\log \log(e \eta^k) + C}{t}}}.
\end{align}
On the one hand, we have
\begin{align}
  \ebrace{\exists t \geq m: \enorm{\Fhat_t - F}_\infty > \frac{g_t}{t}}
  &= \eunion_{k \in \Z_{\geq 0}} \ebrace{\exists t \in [m\eta^k, m\eta^{k+1}):
      \enorm{\Fhat_t - F}_\infty > \frac{g_t}{t}} \\
  &\subseteq \eunion_{k \in \Z_{\geq 0}}\eparen{\Acal_k^+ \union \Acal_k^-}.
  \label{eq:unif_event_subset}
\end{align}
On the other hand, we will show that, for each $k \geq 0$, the conditions of
\cref{th:empirical_maximal} are satisfied with
$\lambda \defineas A \sqrt{\log \log (e \eta^k) + C}$ and
$\beta \defineas 1 - \sqrt{2(\eta-1) / (A^2 C)} = \gamma(A,C,\eta) \sqrt{\eta /
  (2A^2)}$. It is clear that $\beta \in (0,1)$ since $A$, $C$, $\eta$, and
$\gamma(A,C,\eta)$ are all required to be positive. Also,
\begin{align}
  2(\eta - 1) = (1-\beta)^2 A^2 C
    \leq (1-\beta)^2 A^2 (\log \log(e \eta^k) + C) = (1 - \beta)^2 \lambda^2,
  \quad \forall k \geq 0.
\end{align}
Hence, for each $k$, \cref{th:empirical_maximal} implies
\begin{align}
  \P(\Acal_k^{\pm}) \leq 2\P\eparen{
    \enorm{\sqrt{\floor{\eta^{k+1}}}
      (\Fhat_{\floor{\eta^{k+1}}} - F)_{\pm}}_\infty
    > \frac{\beta A\sqrt{\log \log(e \eta^k) + C}}{\sqrt{\eta}}}.
\end{align}
Applying the one-sided DKW inequality \citep[Theorem 1]{massart_tight_1990} then yields
\begin{align}
  \P(\Acal_k^{\pm})
    \leq 2 \expebrace{-\frac{2c^2A^2(\log \log(et_k) + C)}{\eta}}
    = \frac{2 e^{-\gamma^2(A,C,\eta) C}}
           {(1 + k \log \eta)^{\gamma^2(A,C,\eta)}}.
  \label{eq:dkw_application}
\end{align}
Since $\gamma(A,C,\eta) > 1$, a union bound yields
\begin{align}
  \P\eparen{\eunion_{k \in \N}\eparen{\Acal_k^+ \union \Acal_k^-}}
  &\leq 4 e^{-\gamma^2(A,C,\eta) C} \sum_{k=0}^\infty
    \frac{1}{(1 + k \log \eta)^{\gamma^2(A,C,\eta)}} \\
  &\leq 4e^{-\gamma^2(A,C,\eta) C}
    \eparen{1 + \frac{1}{(\gamma^2(A,C,\eta) - 1) \log \eta}},
  \label{eq:union_prob_bound}
\end{align}
after bounding the sum by an integral. Combining \eqref{eq:unif_event_subset}
with \eqref{eq:union_prob_bound}, we conclude
\begin{align}
  \P\eparen{\exists t \geq m: \enorm{\Fhat_t - F}_\infty > \frac{g_t}{t}}
  \leq 4e^{-\gamma^2(A,C,\eta) C}
    \eparen{1 + \frac{1}{(\gamma^2(A,C,\eta) - 1) \log \eta}}.
  \label{eq:unif_empirical_final}
\end{align}
We note that Theorem 1 of \citet{massart_tight_1990} requires that the tail
probability bound in \eqref{eq:dkw_application} is less than $1/2$. If this is
not true, however, then our final tail probability will be at least one, so that
the result holds vacuously. This completes the proof of the first part of the theorem.

To obtain the final claim, \eqref{eq:empirical_io}, note that the calculations in
\eqref{eq:dkw_application} and \eqref{eq:union_prob_bound}, together with the
first Borel-Cantelli lemma, imply
$\P(A^+_k \text{ or } A^-_k \text{ infinitely often}) = 0$. \qed

\subsection{Proof of \cref{th:qlucb}}\label{sec:proof_qlucb}

Recall that the set of $\epsilon$-optimal arms is denoted by
\[\Acal_\epsilon \defineas \ebrace{k \in [K]: Q^-_k(\pi + \epsilon) \geq \max_{j \in [K]}
  Q^-_j(\pi - \epsilon)}.
  \]

First, we prove that if QLUCB stops, it selects an $\epsilon$-optimal arm with
probability at least $1 - \delta$. Choose any
$k^\star \in \argmax_{k \in [K]} Q^-_k(\pi-\epsilon)$, an arm with optimal
$(\pi-\epsilon)$-quantile, and write
$q^\star \defineas Q^-_{k^\star}(\pi-\epsilon)$ for the corresponding optimum
quantile value. By our choice of $u_n$ and $l_n$ to give one-sided coverage at
level $\delta/K$, the proof of \cref{th:fixed} and a union bound show that
\begin{align}
  \P\eparen{\exists t \in \N \text{ and } k \neq k^\star : U^{\pi-\epsilon}_{k^\star,t} < q^\star \text{ or }
    L^{\pi+\epsilon}_{k,t} > Q^-_k(\pi+\epsilon)}
  \leq \delta. \label{eq:miscoverage_bound}
\end{align}
Suppose QLUCB stops at time $T$ with some arm $k \in \Acal_\epsilon^c$, so that
$Q^-_k(\pi+\epsilon) < q^\star$. Then it must be true that
$L^{\pi+\epsilon}_{k,T} \geq U^{\pi-\epsilon}_{k^\star,T}$, which implies that
$L^{\pi+\epsilon}_{k,T} > Q^-_k(\pi+\epsilon)$ or $U^{\pi-\epsilon}_{k^\star,T} < q^\star$
must hold. But \eqref{eq:miscoverage_bound} shows that this can only occur on an
event of probability at most $\delta$. So with probability at least
$1 - \delta$, QLUCB can only stop with an $\epsilon$-optimal arm.

Next, we prove that QLUCB stops with probability one and obeys the sample
complexity bound \eqref{eq:qlucb_complexity} with probability at least
$1 - 3\delta$.  We first address the case when $\abs{\Acal_\epsilon} > 1$ so that
$\Delta_k$ is given by \eqref{eq:delta_defn} for all $k$; we consider the case
$\abs{\Acal_\epsilon} = 1$ at the end. Let
\begin{align}
  g_n \defineas 0.85 \sqrt{n^{-1} \eparen{
    \log \log (en) + 0.8 \log\pfrac{1612 K}{\delta}}},
\end{align}
for $n \in N$. We choose this quantity to eventually control the deviations of
$\Qhat_{k,t}(p)$ and $\Qhat^-{k,t}(p)$ from $Q^-_k(p)$ and $Q_k(p)$ uniformly
over $k$, $t$ and $p$, via \cref{th:uniform_lil_quantiles}. For each
$k \in [K]$, define
\begin{align}
  \tau_k \defineas \min\ebrace{
    n \in \N: g_n + [u_n(\pi) \bmax l_n(\pi+\epsilon)]
    < \Delta_k \bmax \epsilon}.
\end{align}
We will show that, once each arm has been sampled in $\Lcal_t$ at least
$2\tau_k$ times, the confidence bounds are sufficiently well-behaved to ensure
that QLUCB must stop, on a ``good'' event with probability at least
$1 - 3\delta$. This will imply that QLUCB stops after no more than
$2\sum_{k=1}^K \tau_k$ rounds on the ``good'' event, and this sum has the
desired rate.

Define the ``bad'' event at time $t$, $\Bcal_t = \Bcal^1_t \union \Bcal^2_t$,
where
\begin{align}
  \Bcal^1_t &\defineas \ebrace{\exists k \in [K]:
    U^{\pi-\epsilon}_{k,t} < Q_k(\pi-\epsilon) \text{ or }
    L^{\pi+\epsilon}_{k,t} > Q^-_k(\pi+\epsilon)}, \text{~ and } \\
  \Bcal^2_t &\defineas \ebrace{\exists k \in [K], p \in (0,1):
    \Qhat_{k,t}(p) < Q_k(p - g_{N_{k,t}}) \text{ or }
    \Qhat^-_{k,t}(p) > Q^-_k(p + g_{N_{k,t}})}.
\end{align}

We exploit our previous results to bound the probability that $\Bcal_t$ ever
occurs:
\begin{lemma}\label{th:bad_event_rare}
  $\P\eparen{\union_{t=1}^\infty \Bcal_t} \leq 3\delta$.
\end{lemma}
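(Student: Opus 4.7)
The plan is to decompose the bad event as $\bigcup_{t=1}^\infty \Bcal_t = \bigcup_{t} \Bcal^1_t \cup \bigcup_{t} \Bcal^2_t$ and bound each piece via a union bound over the $K$ arms, leveraging \cref{th:fixed} for the first piece and \cref{th:uniform_lil_quantiles} for the second. The key auxiliary observation, used for both pieces, is that the per-arm statistics $\Qhat_{k,t}$ and $\Fhat_{k,t}$ are measurable functions of $X_{k,1}, \dots, X_{k,N_{k,t}}$ alone, so any event indexed by the algorithm time $t$ and involving only arm $k$'s bounds is contained in a corresponding event indexed by the arm-sample index $n \in \N$, to which our earlier time-uniform guarantees apply directly.

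For $\bigcup_{t} \Bcal^1_t$: this event asserts that for some arm $k$ and some $t$, the upper confidence bound $U^{\pi-\epsilon}_{k,t}$ undershoots $Q_k(\pi-\epsilon)$ or the lower confidence bound $L^{\pi+\epsilon}_{k,t}$ overshoots $Q^-_k(\pi+\epsilon)$. The sequences $l_n, u_n$ from \eqref{eq:qlucb_stitched} are built with $\alpha = 2\delta/K$; inspecting the proof of \cref{th:fixed}, each one-sided deviation in \eqref{eq:f_property} fails with probability at most $\delta/K$, and via \eqref{eq:generic_step_2} this transfers to one-sided miscoverage for each of the bounds $U^{\pi-\epsilon}_{k,\cdot}$ and $L^{\pi+\epsilon}_{k,\cdot}$. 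A union bound over the $K$ arms and the $2$ one-sided events per arm gives $\P(\bigcup_t \Bcal^1_t) \leq 2\delta$.

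For $\bigcup_{t} \Bcal^2_t$: by construction, $g_n$ takes the form in \cref{th:uniform_lil_quantiles} with $m = 1$ and $C = 0.8 \log(1612 K/\delta)$, so the stated tail evaluates to $1612\, e^{-1.25 C} = \delta/K$. Applying this corollary to each arm's i.i.d.\ stream $(X_{k,i})_{i \geq 1}$, we conclude that, for each fixed $k$, with probability at most $\delta/K$ there exist some $n \geq 1$ and $p \in (0,1)$ with $\Qhat_{k,n}(p) < Q_k(p - g_n)$ or $\Qhat^-_{k,n}(p) > Q^-_k(p + g_n)$. Using the observation above with $n = N_{k,t}$, this per-arm guarantee covers the portion of $\Bcal^2_t$ involving arm $k$. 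Summing over $K$ arms gives $\P(\bigcup_t \Bcal^2_t) \leq \delta$.

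Combining the two contributions yields $\P(\bigcup_t \Bcal_t) \leq 3\delta$, as claimed. The only real point requiring care is the translation from algorithm time $t$ to the per-arm sample count $N_{k,t}$; once that is in place, both pieces reduce to straightforward applications of existing results together with union bounds.
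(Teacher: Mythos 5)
Your decomposition and union-bound structure match the paper's proof exactly: split into $\bigcup_t\Bcal^1_t$ and $\bigcup_t\Bcal^2_t$, bound the first by $2\delta$ via \cref{th:fixed} with $\alpha = 2\delta/K$ (one-sided miscoverage $\delta/K$ for each of the two bounds per arm, union bound over $K$ arms), and bound the second by $\delta$ via \cref{th:uniform_lil_quantiles} applied per-arm with a union bound. Your note about translating from algorithm time $t$ to the per-arm sample index $N_{k,t}$ is correct and worth spelling out; the paper leaves this implicit.

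One small technical point you should not skip: \cref{th:uniform_lil_quantiles} is only stated for $C \geq 7$, so you need to check this before invoking it. With your $C = 0.8\log(1612K/\delta)$ (which matches the displayed definition of $g_n$ in the paper), $C \geq 7$ requires $\delta \lesssim K/3.92$, which is not guaranteed for all $\delta \in (0,1)$ when $K = 2$ (it fails for $\delta \gtrsim 0.51$). In fact the paper's proof of this lemma quietly switches to $C = 0.8\log\big(1612K^2/(\delta(K-1))\big)$, which does satisfy $C > 7$ for all $K \geq 2$ and $\delta \in (0,1)$ and gives the slightly tighter per-arm rate $(K-1)\delta/K^2$, summing to $(K-1)\delta/K \leq \delta$; the two expressions for $C$ appear to be an inconsistency in the paper itself. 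Your argument is otherwise sound; just either verify $C \geq 7$ in your intended parameter regime, or inflate $C$ slightly as the paper does in the lemma proof.
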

\begin{proof}
  First, by the definitions of $U^{\pi-\epsilon}_{k,t}, L^{\pi+\epsilon}_{k,t}$
  and our choices of $u_n,l_n$, the proof of \cref{th:fixed}(a) yields
  \begin{align}
    \P\eparen{\eunion_{t=1}^\infty \Bcal^1_t} \leq 2\delta.
    \label{eq:B1_bound}
  \end{align}
  For $\Bcal^2_t$, we invoke \cref{th:uniform_lil_quantiles}. Our choice of
  $C = 0.8 \log(1612 K^2 / (\delta (K-1)))$ ensures that
  $\alpha_{0.85,C} \leq (K-1) \delta / K^2$, noting that $K \geq 2$ implies
  $C > 7$ as required in \eqref{th:uniform_lil}. Hence, by a union bound,
  \begin{align}
    \P\eparen{\eunion_{t=1}^\infty \Bcal^2_t} \leq \delta.
    \label{eq:B2_bound}
  \end{align}
  Combining \eqref{eq:B1_bound} with \eqref{eq:B2_bound} via a union bound, we
  have $\P(\union_{t=1}^\infty \Bcal_t) \leq 3\delta$ as desired.
\end{proof}

The following lemma verifies that an arm's confidence bounds are well-behaved,
in a specific sense, once the arm has been sampled $\tau_j$ times and
$\Bcal^2_t$ does not occur. We use the notation $a_+ \defineas \max(0, a)$.

\begin{lemma}\label{th:sampled_enough}
  For any $t \in \N$ and $j,k \in [K]$, on $(B^2_t)^c$, if
  $N_{k,t} \geq \tau_j$, then
  \begin{align}
    U^{\pi-\epsilon}_{k,t} &\leq Q^-_k(\pi + (\Delta_j - \epsilon)_+),
      \quad \text{and} \label{eq:U_bound}\\
    L^{\pi+\epsilon}_{k,t} &\geq Q_k(\pi - (\Delta_j - \epsilon)_+).
      \label{eq:L_bound}
  \end{align}
\end{lemma}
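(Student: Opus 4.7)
The plan is to chase the definitions: convert each confidence bound into a sample-quantile evaluation, use the event $(\Bcal^2_t)^c$ to pass from sample quantiles to true quantiles at a shifted argument, and then use the definition of $\tau_j$ together with monotonicity of $g_n$, $u_n$, $l_n$ in $n$ (which is evident from~\eqref{eq:qlucb_stitched} and \cref{th:uniform_lil_quantiles}) to simplify the shifted argument to $(\Delta_j - \epsilon)_+$. Finally, the monotonicity of the true quantile functions $Q_k$ and $Q^-_k$ yields the claimed inequalities.

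More concretely, I would first unfold $U^{\pi-\epsilon}_{k,t} = \Qhat^-_{k,t}(\pi - \epsilon + u_{N_{k,t}}(\pi-\epsilon))$. Applying $(\Bcal^2_t)^c$ at $p = \pi - \epsilon + u_{N_{k,t}}(\pi-\epsilon)$ gives
\[
U^{\pi-\epsilon}_{k,t} \leq Q^-_k\bigl(\pi - \epsilon + u_{N_{k,t}}(\pi-\epsilon) + g_{N_{k,t}}\bigr).
\]
Because $N_{k,t} \geq \tau_j$ and the sequences $g_n$, $u_n(\cdot)$ are nonincreasing in $n$, the argument is at most $\pi - \epsilon + u_{\tau_j}(\pi) + g_{\tau_j}$ (interpreting $u_{\tau_j}(\pi)$ as the relevant dominant term, consistent with the definition of $\tau_j$). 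By the defining inequality of $\tau_j$, this is strictly less than $\pi - \epsilon + (\Delta_j \bmax \epsilon)$. The identity
\[
\pi - \epsilon + (\Delta_j \bmax \epsilon) = \pi + (\Delta_j - \epsilon)_+,
\]
verified by splitting on whether $\Delta_j \geq \epsilon$ or not, reduces the argument to $\pi + (\Delta_j - \epsilon)_+$. Monotonicity of $Q^-_k$ then yields \eqref{eq:U_bound}.

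The lower bound \eqref{eq:L_bound} follows symmetrically. Unfolding gives $L^{\pi+\epsilon}_{k,t} = \Qhat_{k,t}(\pi + \epsilon - l_{N_{k,t}}(\pi+\epsilon))$; the event $(\Bcal^2_t)^c$ supplies $\Qhat_{k,t}(p) \geq Q_k(p - g_{N_{k,t}})$, leading to
\[
L^{\pi+\epsilon}_{k,t} \geq Q_k\bigl(\pi + \epsilon - l_{N_{k,t}}(\pi+\epsilon) - g_{N_{k,t}}\bigr).
\]
Applying $N_{k,t} \geq \tau_j$, the definition of $\tau_j$, and the identity $\pi + \epsilon - (\Delta_j \bmax \epsilon) = \pi - (\Delta_j - \epsilon)_+$, together with monotonicity of $Q_k$, completes the proof.

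The only mildly delicate point, and therefore the main thing to get right rather than a true obstacle, is the bookkeeping at step three: making sure that the decreasing monotonicity of all three sequences $g_n$, $u_n(\pi)$, and $l_n(\pi+\epsilon)$ in $n$ is used consistently, and that the $\max$ in the definition of $\tau_j$ is invoked correctly to dominate both $u_{N_{k,t}}(\pi-\epsilon)$ in the upper bound and $l_{N_{k,t}}(\pi+\epsilon)$ in the lower bound. The algebraic collapse $\pi - \epsilon + (\Delta_j \bmax \epsilon) = \pi + (\Delta_j - \epsilon)_+$ is clean once the case split is written out, and no probabilistic ingredient is needed beyond the single deterministic reduction on the event $(\Bcal^2_t)^c$.
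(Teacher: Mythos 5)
Your proposal is correct and takes essentially the same route as the paper: unfold the confidence bound, apply $(\Bcal^2_t)^c$ to replace the sample quantile with a true quantile at a $g_{N_{k,t}}$-shifted argument, invoke $N_{k,t} \geq \tau_j$ and the definition of $\tau_j$ to bound the argument by $\pi - \epsilon + (\Delta_j \bmax \epsilon)$, and finish with the identity $\pi - \epsilon + (\Delta_j \bmax \epsilon) = \pi + (\Delta_j - \epsilon)_+$ and monotonicity of $Q^-_k$ (and symmetrically for the lower bound). You are in fact slightly more careful than the paper in flagging both the implicit monotonicity-in-$n$ of $g_n, u_n, l_n$ needed to pass from $N_{k,t} \geq \tau_j$ to the defining inequality of $\tau_j$, and the mild mismatch between the $u_n(\pi - \epsilon)$ appearing in the confidence bound and the $u_n(\pi)$ appearing in the definition of $\tau_j$, which the paper glosses over in the same way.
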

\begin{proof}
From the definition of $U^{\pi-\epsilon}_{k,t}$,
\begin{align}
  U^{\pi-\epsilon}_{k,t}
    = \Qhat^-_{k,t}(\pi - \epsilon + u_{N_{k,t}}(\pi - \epsilon))
    \leq Q^-_k(\pi - \epsilon + u_{N_{k,t}}(\pi - \epsilon) + g_{N_{k,t}}),
\end{align}
since we are on $(\Bcal^2_t)^c$. Then since $N_{k,t} \geq \tau_j$,
\begin{align}
  Q^-_k(\pi - \epsilon + u_{N_{k,t}}(\pi - \epsilon) + g_{N_{k,t}})
    \leq Q^-_k(\pi - \epsilon + (\Delta_j \bmax \epsilon))
    = Q^-_k(\pi + (\Delta_j - \epsilon)_+).
\end{align}
An analogous argument shows the second conclusion:
\begin{align}
  L^{\pi+\epsilon}_{k,t}
    = \Qhat_{k,t}(\pi + \epsilon - l_{N_{k,t}}(\pi + \epsilon))
    \label{eq:L_argument}
    &\geq Q_k(\pi + \epsilon - l_{N_{k,t}}(\pi + \epsilon) - g_{N_{k,t}}) \\
    &\geq Q_k(\pi + \epsilon - (\Delta_j \bmax \epsilon))
    = Q_k(\pi - (\Delta_j - \epsilon)_+).
\end{align}
\end{proof}

The next three lemmas will show that, once an arm in $\Lcal_t$ has been
sufficiently sampled, QLUCB must stop. The easier case is when an arm's gap is
small, $\Delta_k < \epsilon$.
\begin{lemma}\label{th:stop_small}
  For any $t \in \N$ and $k \in [K]$ with $\Delta_k < \epsilon$, on $(B^2_t)^c$,
  if $N_{k,t} \geq \tau_k$, then
  $L^{\pi+\epsilon}_{h_t,t} \geq U^{\pi-\epsilon}_{k,t}$.
\end{lemma}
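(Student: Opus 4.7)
The plan is to combine Lemma~\ref{th:sampled_enough} applied with $j = k$ with the defining property of $h_t$. Since $\Delta_k < \epsilon$, the quantity $(\Delta_k - \epsilon)_+$ vanishes, so the two conclusions of Lemma~\ref{th:sampled_enough} collapse to the particularly clean form $U^{\pi-\epsilon}_{k,t} \leq Q^-_k(\pi)$ and $L^{\pi+\epsilon}_{k,t} \geq Q_k(\pi)$. These are valid because we are on $(\Bcal^2_t)^c$ and $N_{k,t} \geq \tau_k$, i.e., the very hypotheses we have been handed.

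Next, I would use the fact that $h_t \in \argmax_{j \in [K]} L^{\pi+\epsilon}_{j,t}$, which in particular implies $L^{\pi+\epsilon}_{h_t,t} \geq L^{\pi+\epsilon}_{k,t}$. Combined with the basic (deterministic) inequality $Q_k(\pi) \geq Q^-_k(\pi)$ noted earlier in the paper, this gives the chain
\begin{equation*}
L^{\pi+\epsilon}_{h_t,t} \;\geq\; L^{\pi+\epsilon}_{k,t} \;\geq\; Q_k(\pi) \;\geq\; Q^-_k(\pi) \;\geq\; U^{\pi-\epsilon}_{k,t},
\end{equation*}
which is exactly the desired conclusion.

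There is no real obstacle here; the substantive work was done in establishing Lemma~\ref{th:sampled_enough}. The only thing to verify carefully is that Lemma~\ref{th:sampled_enough} really can be applied with $j = k$ (which only requires $N_{k,t} \geq \tau_k$, supplied by hypothesis) and that when $\Delta_k < \epsilon$ the positive-part expression $(\Delta_k - \epsilon)_+$ collapses to $0$, so that the right-hand sides become $Q^-_k(\pi)$ and $Q_k(\pi)$ respectively. No appeal to event $\Bcal^1_t$ is needed; this lemma is purely deterministic conditional on the event $(\Bcal^2_t)^c$.
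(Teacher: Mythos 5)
Your proof is correct and matches the paper's argument exactly: both invoke Lemma~\ref{th:sampled_enough} with $j=k$ (whose conclusions are \eqref{eq:U_bound} and \eqref{eq:L_bound}), note that $(\Delta_k - \epsilon)_+ = 0$ when $\Delta_k < \epsilon$, and then chain $L^{\pi+\epsilon}_{h_t,t} \geq L^{\pi+\epsilon}_{k,t} \geq Q_k(\pi) \geq Q^-_k(\pi) \geq U^{\pi-\epsilon}_{k,t}$ using the definition of $h_t$.
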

\begin{proof}
  Our choice of $h_t$ ensures
  $L^{\pi+\epsilon}_{h_t,t} \geq L^{\pi+\epsilon}_{k,t}$, while
  \eqref{eq:U_bound} and \eqref{eq:L_bound} show that
  \begin{align}
     L^{\pi+\epsilon}_{k,t}\geq Q_k(\pi) \geq Q^-_k(\pi)
    \geq U^{\pi-\epsilon}_{k,t}.
  \end{align}
\end{proof}

To handle arms with $\Delta_k \geq \epsilon$, we associate with each arm $k$ an
arm $g(k)$ which satisfies
$Q^-_k(\pi + \Delta_k) \leq Q_{g(k)}(\pi - \Delta_k)$. Some such arm must exist
by the definition of $\Delta_k$ and the fact that $Q^-$ is left-continuous while
$Q$ is right-continuous. We first show that, when an arm $k \in \Lcal_t$ with
$\Delta_k \geq \epsilon$ has been sufficiently sampled, we must also sample
$g(k)$:
\begin{lemma}\label{th:sample_g}
  For any $t \in \N$ and $k \in [K]$ with $\Delta_k \geq \epsilon$, on $B_t^c$,
  if $N_{k,t} \geq \tau_k$, then
  $U^{\pi-\epsilon}_{g(k),t} \geq U^{\pi-\epsilon}_{k,t}$.
\end{lemma}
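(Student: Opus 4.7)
The plan is to produce a short chain of inequalities going from the upper bound on $U^{\pi-\epsilon}_{k,t}$ up through the gap-defining relation and then back down to $U^{\pi-\epsilon}_{g(k),t}$ via the coverage of $g(k)$'s UCB. No fundamentally new tools are needed; the previous lemma (\cref{th:sampled_enough}) and the event $(\Bcal^1_t)^c$ together with monotonicity of the quantile functions do all the work.

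First I would invoke \cref{th:sampled_enough} with $j = k$: since $N_{k,t}\ge\tau_k$ and we are on $(\Bcal^2_t)^c$, it yields $U^{\pi-\epsilon}_{k,t}\le Q^-_k(\pi + (\Delta_k-\epsilon)_+)$. Using the hypothesis $\Delta_k\ge\epsilon$, the truncation is vacuous, so $U^{\pi-\epsilon}_{k,t}\le Q^-_k(\pi+\Delta_k-\epsilon)\le Q^-_k(\pi+\Delta_k)$ by monotonicity of $Q^-_k$.

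Next I would invoke the defining property of $g(k)$. By construction (and left-continuity of $Q^-$ and right-continuity of $Q$, as noted in the text preceding the lemma), $Q^-_k(\pi+\Delta_k)\le Q_{g(k)}(\pi-\Delta_k)$. Then monotonicity of $Q_{g(k)}$ together with $\Delta_k\ge\epsilon$ (so $\pi-\Delta_k\le\pi-\epsilon$) gives $Q_{g(k)}(\pi-\Delta_k)\le Q_{g(k)}(\pi-\epsilon)$. Finally, on $(\Bcal^1_t)^c$ the UCB for arm $g(k)$ covers its true $(\pi-\epsilon)$-quantile, i.e. $U^{\pi-\epsilon}_{g(k),t}\ge Q_{g(k)}(\pi-\epsilon)$. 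Concatenating the four inequalities yields $U^{\pi-\epsilon}_{k,t}\le U^{\pi-\epsilon}_{g(k),t}$, as required.

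There is no real obstacle; the only thing to be careful about is the reduction $(\Delta_k-\epsilon)_+=\Delta_k-\epsilon$, which uses the $\Delta_k\ge\epsilon$ hypothesis, and the directionality of the monotonicity step $Q_{g(k)}(\pi-\Delta_k)\le Q_{g(k)}(\pi-\epsilon)$, which also uses $\Delta_k\ge\epsilon$. The two pieces of the bad event play distinct roles: $(\Bcal^2_t)^c$ furnishes the upper bound on $U^{\pi-\epsilon}_{k,t}$ via the sample-quantile concentration built into \cref{th:sampled_enough}, whereas $(\Bcal^1_t)^c$ furnishes the lower bound on $U^{\pi-\epsilon}_{g(k),t}$ via the fixed-quantile coverage from \cref{th:fixed}.
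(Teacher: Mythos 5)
Your proof is correct and follows essentially the same chain of inequalities as the paper: \cref{th:sampled_enough} (with $j=k$) to bound $U^{\pi-\epsilon}_{k,t}$ from above, the defining property of $g(k)$, monotonicity of $Q_{g(k)}$ using $\Delta_k\ge\epsilon$, and finally coverage on $(\Bcal^1_t)^c$ to lower-bound $U^{\pi-\epsilon}_{g(k),t}$. You merely make explicit the intermediate step $Q^-_k(\pi+\Delta_k-\epsilon)\le Q^-_k(\pi+\Delta_k)$ that the paper glosses over, which is a welcome clarification rather than a departure.
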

\begin{proof}
  Bound \eqref{eq:U_bound} and our choice of $g(k)$ ensure
  \begin{align}
    U^{\pi-\epsilon}_{k,t} \leq Q^-_k(\pi + \Delta_k)
      \leq Q_{g(k)}(\pi - \Delta_k). \label{eq:u_gk}
  \end{align}
  But $\Delta_k \geq \epsilon$, so
  $Q_{g(k)}(\pi - \Delta_k) \leq Q_{g(k)}(\pi - \epsilon)$, and the latter is
  upper bounded by $U^{\pi - \epsilon}_{g(k),t}$ since we are on $(B^1_t)^c$.
\end{proof}
Finally, we show that once arms $k \in \Lcal_t$ and $g(k)$ have both been
sufficiently sampled, we must stop.
\begin{lemma}\label{th:stop_large}
  For any $t \in \N$ and $k \in [K]$ with $\Delta_k \geq \epsilon$, on $B_t^c$,
  if $N_{k,t} \geq \tau_k$ and $N_{g(k),t} \geq \tau_k$, then
  $L^{\pi+\epsilon}_{h_t,t} \geq U^{\pi-\epsilon}_{k,t}$.
\end{lemma}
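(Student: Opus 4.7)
The plan is to reduce the desired inequality $L^{\pi+\epsilon}_{h_t,t} \geq U^{\pi-\epsilon}_{k,t}$ to a chain comparing each of these two random confidence bounds to the deterministic population quantities $Q^-_k(\pi + \Delta_k)$ and $Q_{g(k)}(\pi - \Delta_k)$, which are related by the defining property of $g(k)$. I would first note that since $h_t \in \argmax_j L^{\pi+\epsilon}_{j,t}$, it suffices to show $L^{\pi+\epsilon}_{g(k),t} \geq U^{\pi-\epsilon}_{k,t}$; in other words, once $k$ has been sampled enough, $g(k)$ provides a witness that arm $k$'s lower bound is beaten.

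I would then invoke \cref{th:sampled_enough} twice, both times with $j = k$, using the hypothesis $\Delta_k \geq \epsilon$ so that $(\Delta_k - \epsilon)_+ = \Delta_k - \epsilon$. Applying the bound to arm $k$ with $N_{k,t} \geq \tau_k$ gives
\begin{align}
U^{\pi-\epsilon}_{k,t} \leq Q^-_k(\pi + \Delta_k - \epsilon),
\end{align}
and applying the bound to arm $g(k)$ with $N_{g(k),t} \geq \tau_k$ gives
\begin{align}
L^{\pi+\epsilon}_{g(k),t} \geq Q_{g(k)}(\pi - \Delta_k + \epsilon).
\end{align}

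The final step strings these together via monotonicity of $Q^-_k$ and $Q_{g(k)}$ together with the defining inequality $Q^-_k(\pi + \Delta_k) \leq Q_{g(k)}(\pi - \Delta_k)$ for $g(k)$. Specifically,
\begin{align}
Q^-_k(\pi + \Delta_k - \epsilon)
\leq Q^-_k(\pi + \Delta_k)
\leq Q_{g(k)}(\pi - \Delta_k)
\leq Q_{g(k)}(\pi - \Delta_k + \epsilon),
\end{align}
and concatenating with the two displayed inequalities from \cref{th:sampled_enough} yields $U^{\pi-\epsilon}_{k,t} \leq L^{\pi+\epsilon}_{g(k),t} \leq L^{\pi+\epsilon}_{h_t,t}$, as desired.

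I do not expect any real obstacle here; the only thing to watch is bookkeeping of which side of the $\epsilon$-shift we are on. The construction of $\tau_k$ buys us the $\epsilon$-sized slack that lets the sample-quantile bounds of \cref{th:sampled_enough} only move us from $\pi \pm \epsilon$ to $\pi \pm (\Delta_k \bmax \epsilon) = \pi \pm \Delta_k$, which is exactly where the $g(k)$ inequality kicks in. The use of $B_t^c$ (rather than just $(B_t^2)^c$) is what licenses the earlier \cref{th:sample_g} step in the larger argument and also ensures the $g(k)$-related confidence events hold, but for this particular lemma only $(B_t^2)^c$ is needed through \cref{th:sampled_enough}.
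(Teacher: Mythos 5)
Your proof is correct and follows essentially the same route as the paper's: both apply \cref{th:sampled_enough} with $j=k$ to arms $k$ and $g(k)$, chain through the defining inequality $Q^-_k(\pi+\Delta_k)\leq Q_{g(k)}(\pi-\Delta_k)$ via monotonicity, and finish with $L^{\pi+\epsilon}_{h_t,t}\geq L^{\pi+\epsilon}_{g(k),t}$. Your side remark that only $(\Bcal^2_t)^c$ is actually needed here is also accurate; the full $\Bcal_t^c$ in the hypothesis is only needed for the companion \cref{th:sample_g}.
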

\begin{proof}
  As in \eqref{eq:u_gk}, we have
  \begin{align}
    U^{\pi-\epsilon}_{k,t} \leq Q_{g(k)}(\pi - \Delta_k)
      \leq Q_{g(k)}(\pi - (\Delta_k - \epsilon)_+).
  \end{align}
  But since $N_{g(k),t} \geq \tau_k$, \eqref{eq:L_bound} implies
  \begin{align}
    Q_{g(k)}(\pi - (\Delta_k - \epsilon)_+) \leq L^{\pi+\epsilon}_{g(k),t}
      \leq L^{\pi+\epsilon}_{h_t,t}
  \end{align}
  by our choice of $h_t$.
\end{proof}

We combine the preceding lemmas in the following key result. Write
$M_{k,t} = \sum_{s=1}^t \indicator{k \in \Lcal_s}$ and note that
$N_{k,t} \geq M_{k,t}$ since we sample every arm in $\Lcal_t$ at time $t$.
\begin{lemma}\label{th:must_stop}
  For any $t \in \N$, on $\Bcal_t^c$, if $M_{k,t} \geq 2 \tau_k$ for any
  $k \in \Lcal_t$, then QLUCB must stop at time $t$.
\end{lemma}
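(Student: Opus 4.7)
The plan is to verify the stopping condition $L^{\pi+\epsilon}_{h_t,t} \geq \max_{j \neq h_t} U^{\pi-\epsilon}_{j,t}$ at the designated arm $h_t$. Because $k \in \Lcal_t \subseteq \argmax_{j \neq h_t} U^{\pi-\epsilon}_{j,t}$, one has $U^{\pi-\epsilon}_{k,t} = \max_{j \neq h_t} U^{\pi-\epsilon}_{j,t}$, so it suffices to show $L^{\pi+\epsilon}_{h_t,t} \geq U^{\pi-\epsilon}_{k,t}$. The argument splits on whether $\Delta_k < \epsilon$, where \cref{th:stop_small} applies directly, or $\Delta_k \geq \epsilon$, where \cref{th:stop_large} applies once one first certifies that the witness arm $g(k)$ has been sampled sufficiently. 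Throughout I interpret $\Bcal_t^c$ as holding on $\Bcal_s^c$ for every $s \leq t$, so that helper lemmas at past rounds remain available.

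The first observation is that the hypothesis $M_{k,t} \geq 2\tau_k$ immediately gives $N_{k,t} \geq M_{k,t} \geq 2\tau_k \geq \tau_k$, since every round in which $k \in \Lcal_s$ contributes a sample to $k$. When $\Delta_k < \epsilon$, this is already enough: \cref{th:stop_small} at time $t$ yields $L^{\pi+\epsilon}_{h_t,t} \geq U^{\pi-\epsilon}_{k,t}$, and the stopping condition follows.

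The substantive case is $\Delta_k \geq \epsilon$, where I must establish $N_{g(k),t} \geq \tau_k$ in order to invoke \cref{th:stop_large}. Let $s_1 < \dots < s_{M_{k,t}}$ enumerate the rounds $s \leq t$ at which $k \in \Lcal_s$, so that $N_{k,s_j} \geq j$. For each $j$ with $\tau_k \leq j \leq M_{k,t}$, \cref{th:sample_g} applied at round $s_j$ gives $U^{\pi-\epsilon}_{g(k),s_j} \geq U^{\pi-\epsilon}_{k,s_j}$, and since $k \in \Lcal_{s_j}$ this value is the maximum of $U^{\pi-\epsilon}_{\cdot,s_j}$ over arms distinct from $h_{s_j}$. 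Consequently either $g(k) = h_{s_j}$ or $g(k) \in \Lcal_{s_j}$, and in either case $g(k)$ is sampled in round $s_j$. There are at least $M_{k,t} - \tau_k + 1 \geq \tau_k + 1$ such indices $j$, and even if the final one occurs at $s_{M_{k,t}} = t$ (whose sample has not yet updated $N_{g(k),t}$), at least $\tau_k$ samples of $g(k)$ accumulate strictly before round $t$, giving $N_{g(k),t} \geq \tau_k + 1$. \Cref{th:stop_large} then yields $L^{\pi+\epsilon}_{h_t,t} \geq U^{\pi-\epsilon}_{k,t}$, completing the argument.

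The main obstacle is the combinatorial bookkeeping in the previous paragraph, which also explains the factor of two in the hypothesis: roughly $\tau_k$ of the visits to $\Lcal$ are required simply to bring $N_{k,\cdot}$ up to the threshold at which \cref{th:sample_g} forces $g(k)$ into the sampled set, and a further $\tau_k$ visits are then needed before $g(k)$'s own sample count reaches $\tau_k$, with an off-by-one correction to account for whether the last relevant sample occurs at round $t$ itself.
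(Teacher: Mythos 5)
Your proof is correct and follows essentially the same route as the paper's: split on $\Delta_k < \epsilon$ versus $\Delta_k \geq \epsilon$, dispatch the first case with \cref{th:stop_small}, and in the second case use \cref{th:sample_g} to force $g(k)$ to be sampled at every round where $k \in \Lcal_s$ once $M_{k,s}\geq\tau_k$, so that $N_{g(k),t}\geq\tau_k$ and \cref{th:stop_large} applies. You make the round-by-round bookkeeping more explicit (and correctly flag that ``on $\Bcal_t^c$'' must implicitly mean on $\bigcap_{s\leq t}\Bcal_s^c$ for the inductive use of \cref{th:sample_g} at earlier rounds), but the structure and key lemmas match the paper's proof exactly.
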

\begin{proof}
  If $\Delta_k < \epsilon$ then the conclusion follows immediately from
  \cref{th:stop_small}. If $\Delta_k \geq \epsilon$, then \cref{th:sample_g}
  implies $N_{g(k),t} \geq M_{k,t} - \tau_k$, since once $M_{k,t} \geq \tau_k$,
  we must have $U_{g(k),t} \geq U_{k,t}$ so that either $g(k) = h_t$ or
  $g(k) \in \Lcal_t$ whenever $k \in \Lcal_t$. Thus when $M_{k,t} \geq 2\tau_k$,
  we must have $N_{g(k),t} \geq \tau_k$ and the conclusion follows from
  \cref{th:stop_large}.
\end{proof}

We can now show that QLUCB stops after no more than $4 \sum_{k=1}^K \tau_k$
samples with probability at least $1 - 3\delta$. On $\Bcal_t^c$,
\cref{th:must_stop} allows us to write
\begin{align}
  T &\leq \sum_{t=1}^\infty (1 + \abs{\Lcal_t}) \indicatorb{
    M_{k,t} < 2\tau_{k} \text { for all } k \in \Lcal_t} \\
  &\leq 2 \sum_{t=1}^\infty \sum_{k=1}^K \indicatorb{
    k \in \Lcal_t \text{ and } M_{k,t} < 2\tau_k} \\
  &\leq 4\sum_{k=1}^K \tau_k,
    \label{eq:T_intermediate}
\end{align}
by the definition of $M_{k,t}$. Hence
$\P(T \leq 4\sum_{k=1}^K \tau_k) \geq 1 - \P(\union_{t=1}^\infty \Bcal_t) \geq 1
- 3\delta$ using \cref{th:bad_event_rare}. It remains to show that $T < \infty$
a.s., and to show that $\sum_{k=1}^K \tau_k$ has the desired rate.

First, Corollary 1 of \citet{howard_uniform_2018} implies that
$\P(\Bcal^1_t \text{ infinitely often}) = 0$, while \cref{th:uniform_lil}
implies $\P(\Bcal^2_t \text{ infinitely often}) = 0$. So, with probability one,
there exists $t_0$ such that $\Bcal_t$ occurs for no $t \geq t_0$, and the above
calculations show that $T \leq t_0 + 4\sum_{k=1}^K \tau_k$. We conclude
$T < \infty$ almost surely.

Second, to show that $\sum_{k=1}^K \tau_k$ has the rate given in
\eqref{eq:qlucb_complexity}, we use the following lemma, which bounds the time
for an iterated-logarithm confidence sequence radius to shrink to a desired
size.

\begin{lemma}\label{th:tau_lemma}
  Suppose $(a_n(C))_{n \in \N}$ is a real-valued sequence for each $C > 0$
  satisfying $a_n = \Ocal(\sqrt{n^{-1} (\log \log n + C)})$ as
  $n, C \uparrow \infty$. Then
  \begin{align}
    \min\ebrace{n \in \N: a_n(C) \leq x} = \Ocal\pfrac{\log \log x^{-1} + C}{x}
    \quad \text{as } x \downarrow 0, C \uparrow \infty.
  \end{align}
\end{lemma}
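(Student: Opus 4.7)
The plan is to invert the asymptotic estimate in the hypothesis. Fix a constant $K$ such that $a_n(C) \leq K\sqrt{n^{-1}(\log\log n + C)}$ for all sufficiently large $n$ and $C$; then $a_n(C) \leq x$ is implied by the implicit inequality $n x^2 \geq K^2(\log\log n + C)$, so it suffices to exhibit an $n$ of the claimed order satisfying this implicit bound.

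I would carry this out via a one-step bootstrap: posit the ansatz $n_0 \defineas \lceil c(\log\log x^{-1} + C)/x^2 \rceil$ for a sufficiently large absolute constant $c$, and verify that it satisfies the implicit inequality. The key computation is $\log\log n_0 = \log\log x^{-1} + \Ocal(1)$ as $x \downarrow 0$ and $C \uparrow \infty$: since $n_0$ grows at most polynomially in $x^{-1}$ and $C$, one has $\log n_0 = \Ocal(\log x^{-1} + \log C)$, and taking one more logarithm gives $\log\log n_0 \leq \log\log x^{-1} + \log\log C + \Ocal(1)$, with the $\log\log C$ term trivially absorbed into $C$. Substituting back, the right-hand side of the implicit inequality is at most $K^2(\log\log x^{-1} + C + \Ocal(1))$ while the left-hand side is at least $c(\log\log x^{-1} + C)$; choosing $c$ large enough relative to $K^2$ makes the inequality hold, yielding $\min\ebrace{n \in \N : a_n(C) \leq x} \leq n_0$, which gives the claimed rate.

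The main obstacle is managing the self-referential $\log\log n_0$ cleanly across the joint regime $x \downarrow 0, C \uparrow \infty$, which rests on the observation that $\log\log C$ is dominated by $C$ itself so is harmlessly absorbed. A minor technicality is that the asymptotic hypothesis only takes effect for $n$ and $C$ past some threshold, but for moderate values of $C$ or $x$ bounded away from zero the stated conclusion holds trivially by inflating the constant $c$. This lemma is then invoked inside the proof of \cref{th:qlucb} with $x = \epsilon \bmax \Delta_k$ and $C$ of order $\log(K/\delta)$, producing the $(\epsilon \bmax \Delta_k)^{-2}$ scaling in the sample complexity bound \eqref{eq:qlucb_complexity}.
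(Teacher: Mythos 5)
Your proposal is correct, and it rests on the same core idea as the paper's proof---invert the implicit inequality $n x^2 \gtrsim \log\log n + C$ while taming the self-referential $\log\log n$ term---but the execution is organized differently. The paper proceeds in two stages: it first derives a crude polynomial bound on the minimizer, $t(x) \leq C^2 A^4 / x^4 + 1$, using $\log(1+\log n) + C \leq C/\sqrt{n}\cdot n$ (i.e.\ $\log n \lesssim \sqrt{n}$), and then substitutes that crude bound back inside the $\log\log$ to refine to $\Ocal\bigl((\log\log x^{-1} + C)/x^2\bigr)$. You instead posit the final answer $n_0 \asymp (\log\log x^{-1}+C)/x^2$ as an ansatz and verify it directly via $\log\log n_0 \leq \log\log x^{-1} + \log\log C + \Ocal(1)$, absorbing $\log\log C$ into $C$ (at the cost of a constant) and choosing the ansatz constant $c$ large relative to the implied constant $K^2$; this one-step verification is slightly more direct and skips the intermediate $x^{-4}$ bound, while the paper's two-stage bootstrap avoids estimating $\log$ of a sum by hand. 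Your handling of the asymptotic regime (the bound on $a_n(C)$ only holds for large $n,C$, but $n_0 \to \infty$ in the stated limit) matches the paper's implicit treatment. One small point worth flagging: both your argument and the paper's own proof conclude with $x^2$ in the denominator, consistent with the $(\epsilon \bmax \Delta_k)^{-2}$ scaling in \cref{th:qlucb}, so the $x$ (rather than $x^2$) appearing in the displayed statement of the lemma is evidently a typo rather than a discrepancy in your proof.
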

\begin{proof}
  Our condition on $a_n(C)$ implies, for small enough $x$ and large enough $C$,
  \begin{align}
    \min\ebrace{n \in \N: a_n(C) \leq x}
    \leq \min\ebrace{n \in \N: \frac{\log(1 + \log n) + C}{n}
      \leq \frac{x^2}{A^2}}
    \defineright t(x).
  \end{align}
  Use $\log(1 + x) \leq x$ to see that
  $\log x = 2 \log \sqrt{x} \leq 2(\sqrt{x} - 1)$, and that
  \begin{align}
    \frac{\log(1 + \log n) + C}{n}
    \leq \frac{\log n + C}{n}
    \leq \frac{2}{\sqrt{n}} + \frac{C - 2}{n}
    \leq \frac{C}{\sqrt{n}},
  \end{align}
  as $n \geq \sqrt{n}$. So $n \geq C^2 A^4 / x^4$ implies that
  $(\log(1 + \log n) + C) / n \leq x^2 / A^2$, and we must have
  $t(x) \leq C^2 A^4 / x^4 + 1$. Hence we may write
  \begin{align}
    t(x) = \min\ebrace{n \in \N:
      \frac{\log(1 + \log(1 + C^2 A^4 / x^4)) + C}{n} \leq \frac{x^2}{A^2}},
  \end{align}
  which immediately yields
  \begin{align}
    t(x) \leq \frac{A^2 [\log(1 + \log(1 + C^2 A^4 / x^4)) + C]}{x^2} + 1
      = \Ocal\pfrac{\log \log x^{-1} + C}{x^2},
  \end{align}
  as desired.
\end{proof}

Examining the form of $u_n$ and $l_n$ given in \eqref{eq:stitching} along with
the definition of $g_n$, we see that
$a_n(C) = g_n + [u_n(\pi) \bmax l_n(\pi+\epsilon)]$ satisfies the condition of
\cref{th:tau_lemma} with $C = \log(K/\delta)$, which implies
\begin{align}
  \tau_k = \Ocal\eparen{(\epsilon \bmax \Delta_k)^{-2}
  \log\pfrac{K\eabs{\log\eparen{\epsilon \bmax \Delta_k}}}{\delta}}.
\end{align}
Summing over $k$ yields the desired sample complexity
\eqref{eq:qlucb_complexity}, completing the proof. \qed

We close with an argument for the case of a unique $\epsilon$-optimal arm,
$\Acal_\epsilon = \brace{k^\star}$. \Cref{th:sample_g,th:stop_large} still apply,
limiting the number of times $k \in \Lcal_t$ for any $k \neq k^\star$. We need a
different argument to limit the number of times $k^\star \in \Lcal_t$:
\begin{lemma}
  Suppose $\Acal_\epsilon = \brace{k^\star}$. For any $t \in \N$, on $B_t^c$, if
  $N_{k^\star,t} \geq \tau_{k^\star}$, then $h_t = k^\star$.
\end{lemma}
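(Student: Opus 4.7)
The plan is to show that on $\Bcal_t^c$, we have $L^{\pi+\epsilon}_{k^\star,t} \geq L^{\pi+\epsilon}_{j,t}$ for every $j \neq k^\star$, from which $h_t = k^\star$ follows by the definition of $\argmax$. This will combine a lower bound on $L^{\pi+\epsilon}_{k^\star,t}$ obtained from \cref{th:sampled_enough} with an upper bound on each competitor $L^{\pi+\epsilon}_{j,t}$ obtained from $(\Bcal^1_t)^c$, linked together by the case~2 gap definition in \eqref{eq:delta_defn}.

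Concretely, I would first apply \cref{th:sampled_enough} with $j = k = k^\star$: since $N_{k^\star,t} \geq \tau_{k^\star}$, it yields $L^{\pi+\epsilon}_{k^\star,t} \geq Q_{k^\star}(\pi - (\Delta_{k^\star} - \epsilon)_+) \geq Q_{k^\star}(\pi - \Delta_{k^\star})$ by monotonicity. Second, for each $j \neq k^\star$, the hypothesis $\Acal_\epsilon = \brace{k^\star}$ forces $Q^-_j(\pi+\epsilon) < \max_i Q^-_i(\pi - \epsilon)$, and plugging $\Delta = \epsilon$ into case~1 of \eqref{eq:delta_defn} shows $\Delta_j \geq \epsilon$, hence $Q^-_j(\pi + \epsilon) \leq Q^-_j(\pi + \Delta_j)$. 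On $(\Bcal^1_t)^c$ we additionally have $L^{\pi+\epsilon}_{j,t} \leq Q^-_j(\pi+\epsilon)$, so $L^{\pi+\epsilon}_{j,t} \leq Q^-_j(\pi + \Delta_j)$. Finally, case~2 of \eqref{eq:delta_defn} asserts $Q^-_{k^\star}(\pi - \Delta) > \max_{j \neq k^\star} Q^-_j(\pi + \Delta_j)$ for every $\Delta < \Delta_{k^\star}$; letting $\Delta \nearrow \Delta_{k^\star}$ and using the identity $\lim_{p \searrow p_0} Q^-_{k^\star}(p) = Q_{k^\star}(p_0)$ gives $Q_{k^\star}(\pi - \Delta_{k^\star}) \geq \max_{j \neq k^\star} Q^-_j(\pi + \Delta_j)$. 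Chaining everything produces the desired inequality.

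The main obstacle is this last limit step: because case~2 defines $\Delta_{k^\star}$ as a supremum over a set cut out by a \emph{strict} inequality, one cannot directly evaluate the inequality at the supremum itself and must instead pass to the right-limit of $Q^-_{k^\star}$, which by the standard relationship between the left-continuous $Q^-$ and right-continuous $Q$ coincides with $Q_{k^\star}$. This is also precisely why the lower bound from \cref{th:sampled_enough} naturally appears in terms of $Q_{k^\star}$ rather than $Q^-_{k^\star}$, so that both sides of the chain meet cleanly at $Q_{k^\star}(\pi - \Delta_{k^\star})$.
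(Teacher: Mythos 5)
Your overall architecture is right: lower-bound $L^{\pi+\epsilon}_{k^\star,t}$, upper-bound each competitor $L^{\pi+\epsilon}_{j,t}$ by $Q^-_j(\pi+\Delta_j)$ on $(\Bcal^1_t)^c$, and link them through case~2 of \eqref{eq:delta_defn}. The upper bound on the competitors and the observation $\Delta_j \geq \epsilon$ are both correct. Your technical claim $\lim_{p \searrow p_0} Q^-(p) = Q(p_0)$ is also correct.

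However, there is a genuine gap: your chain of inequalities only establishes the \emph{weak} inequality $L^{\pi+\epsilon}_{k^\star,t} \geq L^{\pi+\epsilon}_{j,t}$, which does not force $h_t = k^\star$. Since $h_t$ is an arbitrary element of $\argmax_k L^{\pi+\epsilon}_{k,t}$, a tie would permit $h_t = j \neq k^\star$, so the lemma requires the \emph{strict} inequality $L^{\pi+\epsilon}_{k^\star,t} > L^{\pi+\epsilon}_{j,t}$. Strictness is lost in two places in your argument. First, by invoking \cref{th:sampled_enough} you obtain $L^{\pi+\epsilon}_{k^\star,t} \geq Q_{k^\star}(\pi - (\Delta_{k^\star}-\epsilon)_+)$ and then further relax to $Q_{k^\star}(\pi - \Delta_{k^\star})$; both steps are nonstrict. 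Second, sending $\Delta \nearrow \Delta_{k^\star}$ in the strict defining inequality of case~2 and passing to the limit only yields $Q_{k^\star}(\pi - \Delta_{k^\star}) \geq \max_{j \neq k^\star} Q^-_j(\pi + \Delta_j)$, again nonstrict.

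The paper avoids this by never taking the supremum to its endpoint. It uses the display labeled \eqref{eq:L_argument} together with the \emph{strict} inequality inside the definition of $\tau_{k^\star}$ to conclude $L^{\pi+\epsilon}_{k^\star,t} \geq Q_{k^\star}(\pi - x)$ for some specific $x < \Delta_{k^\star}$. Because the set $\ebrace{\Delta: Q^-_{k^\star}(\pi-\Delta) > \max_{j\neq k^\star} Q^-_j(\pi+\Delta_j)}$ is downward-closed with supremum $\Delta_{k^\star}$, any $x < \Delta_{k^\star}$ actually belongs to it, giving the strict bound $Q_{k^\star}(\pi - x) \geq Q^-_{k^\star}(\pi - x) > \max_{j\neq k^\star} Q^-_j(\pi+\Delta_j)$. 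To repair your proof you should mirror this: do not pass through \cref{th:sampled_enough}, but instead extract from $N_{k^\star,t}\geq\tau_{k^\star}$ a radius strictly below $\Delta_{k^\star}$ and evaluate $Q^-_{k^\star}$ there, rather than at the limit point.
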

\begin{proof}
  For any $k \neq k^\star$, we must have
  $L^{\pi+\epsilon}_{k,t} \leq Q^-_k(\pi+\epsilon)$ since we are on
  $(\Bcal^1_t)^c$, and $Q^-_k(\pi+\epsilon) \leq Q^-_k(\pi+\Delta_k)$ since
  $k \notin \Acal_\epsilon$ and therefore $\Delta_k \geq \epsilon$. Meanwhile, the
  argument in \eqref{eq:L_argument} and the definition of $\tau_{k^\star}$ imply
  that $L^{\pi+\epsilon}_{k^\star,t} \geq Q_{k^\star}(\pi-x)$ for some
  $x < \Delta_{k^\star}$, so that
  $L^{\pi+\epsilon}_{k^\star,t} > Q^-_k(\pi+\Delta_k)$ by our choice of
  $\Delta_{k^\star}$. We conclude that
  $L^{\pi+\epsilon}_{k,t} < L^{\pi+\epsilon}_{k^\star,t}$ for every
  $k \neq k^\star$, so we must have $h_t = k^\star$.
\end{proof}
Now we adapt the argument leading to \eqref{eq:T_intermediate}:
\begin{align}
  T &\leq \sum_{t=1}^\infty (1 + \abs{\Lcal_t}) \indicatorb{
    M_{k,t} < 2\tau_{k} \text { for all }
    k \in \Lcal_t \setminus \brace{k^\star}} \\
  &\leq 2 \sum_{t=1}^\infty \eparen{
    \indicatorb{k^\star \in \Lcal_t} +
    \sum_{k \neq k^\star} \indicatorb{
      k \in \Lcal_t \text{ and } M_{k,t} < 2\tau_k}} \\
  &\leq 2 \eparen{\tau_{k^\star} + 2\sum_{k \neq k^\star} \tau_k} ~ \leq 4 \sum_{k=1}^K \tau_k.
\end{align}

\section{Acknowledgments}

We thank Jon McAuliffe for helpful comments. Howard thanks Office of Naval
Research (ONR) Grant N00014-15-1-2367.

\setlength{\bibsep}{0pt plus 0.3ex}

\bibliographystyle{agsm}
\bibliography{quantiles}

\newpage
\appendix

\section{A time- and quantile-uniform bound with $p$-dependence}
\label{sec:double_stitching}

In this section we describe an alternative to \cref{th:uniform_lil} and
\cref{th:uniform_lil_quantiles} for which the width of the confidence band
depends on $p$. It is notationally quite cumbersome, but often yields tighter
bounds, especially for $p$ near zero and one. This confidence sequence is
derived by following the same contours as those behind the fixed-quantile bound
\eqref{eq:stitching_simple}. However, within each epoch, rather than focus on a
single quantile, we take a union bound over a grid of quantiles, with the grid
becoming finer as time increases. Below, we write
$\logit(p) \defineas \log(p / (1 - p))$ and $\logit^{-1}(l) = e^l / (1 + e^l)$.

\begin{align}
  r_{p,t} &\defineas \begin{cases}
    p, & p \geq 1/2, \\
    \frac{1}{2} \bmin
      \logit^{-1}\eparen{\logit(p) + \sqrt{\frac{2.1}{t}}},
    & p < 1/2,
    \end{cases} \label{eq:rpt_defn} \\
  \ell(p, t) &\defineas 1.4 \log\log(2.1 t)
    + 1.4 \log\eparen{\sqrt{t} \abs{\logit(p)} + 1}
    + \log\pfrac{72}{\alpha},
    \label{eq:uniform_ell} \\
  \widetilde{g}_t(p) &\defineas
    \delta\sqrt{2.1 t r_{p,t} (1 - r_{p,t})}
    + 1.5 \sqrt{r_{p,t} (1 - r_{p,t}) t \ell(p, t)}
    + 0.81 \ell(p, t).
  \label{eq:uniform_quantile_bound}
\end{align}

With all the required notation in place, we now state our final confidence
sequence.
\begin{theorem}[Quantile-uniform confidence sequence II]
  \label{th:uniform2}
  For any $\alpha \in (0,1)$,
  \begin{align}
    \P\eparen{
      \exists t \in \N, x \in \Xcal:
      \Fhat_t(x) - F(x) \notin \left[
        -\frac{\widetilde{g}_t(1-F(x))}{t},
        \frac{\widetilde{g}_t(F(x))}{t}}
    \right) \leq \alpha,
    \label{eq:double_stitching_con1}
  \end{align}
  or, more conveniently,
  \begin{align}
    \P\eparen{
      \exists t \in \N, p \in (0,1):
      Q^-(p) < \Qhat_t\eparen{p - \frac{\widetilde{g}_t(1-p)}{t}} \text{ or }
      Q(p) > \Qhat^-_t\eparen{p + \frac{\widetilde{g}_t(p)}{t}}
    } \leq \alpha.
    \label{eq:double_stitching_con2}
  \end{align}
\end{theorem}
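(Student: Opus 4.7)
The plan is to establish \eqref{eq:double_stitching_con1} directly; the quantile form \eqref{eq:double_stitching_con2} then follows from the inversion implications \eqref{eq:Fgeq_Fleq}--\eqref{eq:Fgt_Fmlt} in exactly the manner used at the end of the proof of \cref{th:fixed}. For each fixed $x \in \Xcal$, the centered empirical process $M_t^{(x)} \defineas \sum_{i=1}^t (\indicator{X_i \leq x} - F(x))$ has i.i.d., bounded, mean-zero increments, and by Fact~1(b) and Lemma~2 of \citet{howard_exponential_2018} it is sub-Bernoulli with range parameters $g = F(x)$, $h = 1 - F(x)$, and intrinsic variance process $V_t = F(x)(1-F(x))\, t$. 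This is precisely the setting in which the sub-Bernoulli polynomial-stitched uniform boundaries of \citet{howard_uniform_2018} that drove $f_t$ in \cref{sec:proof_fixed} apply. The remaining task is to upgrade these pointwise-in-$x$ boundaries to a joint boundary over $(t,x)$ whose width is exactly $\widetilde{g}_t(F(x))$.

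The upgrade is a double stitching. First, I would partition $\N$ into geometric time-epochs $I_k = [\eta^k, \eta^{k+1})$ with $\eta = 2.1$, matching the time-stitching used for $f_t$ in \eqref{eq:stitching_simple}; the standard calculation then produces the $1.4 \log\log(2.1 t)$ term in $\ell(p,t)$. Second, within each time-epoch, I would lay down a grid on the $p$-axis that is uniform in logit coordinates with spacing of order $\sqrt{2.1/t}$---this corresponds to the logit-shift $\logit(r_{p,t}) - \logit(p) = \sqrt{2.1/t}$ hard-coded into \eqref{eq:rpt_defn}---and assign the grid point at logit-index $j$ a weight $\propto (1 + |j|)^{-s'}$. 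A logit grid of spacing $\Theta(1/\sqrt{t})$ contains roughly $|\logit(p)|\sqrt{t}$ points with smaller logit, so the log-reciprocal weight of the grid point nearest a target $p$ is $s' \log(\sqrt{t}|\logit(p)| + 1)$; taking $s' = 1.4$ recovers the middle term of \eqref{eq:uniform_ell}. At each grid point $p_0 = r_{p,t}$ I would apply the polynomial stitched sub-Bernoulli boundary of \citet[Proposition~1 and Theorem~1]{howard_uniform_2018} with the same hyperparameters used to derive $f_t$; using $V_t \approx r_{p,t}(1-r_{p,t})\, t$ this yields the main deviation terms $1.5 \sqrt{r_{p,t}(1-r_{p,t})\, t\, \ell(p,t)} + 0.81\, \ell(p,t)$. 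The remaining discretization error from snapping $p$ to the nearest grid point is at most $\sqrt{2.1\, r_{p,t}(1-r_{p,t})/t}$ in $p$-probability, and monotonicity of $\Fhat_t$ and $F$ converts this into the additive $\delta \sqrt{2.1\, t\, r_{p,t}(1 - r_{p,t})}$ term of \eqref{eq:uniform_quantile_bound}.

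The main obstacle is bookkeeping. Every constant in $\widetilde{g}_t(p)$---$1.5$, $0.81$, $\delta\sqrt{2.1}$, and the numerator $72$ inside $\ell(p,t)$---must be tracked carefully through the joint stitching, so that the doubly-indexed union bound over time-epochs $k$ and logit-grid indices $j$ sums to at most $\alpha$. In particular, the two summable weight sequences (one over $k$, one over $j$) must combine to give the $72/\alpha$ inside $\ell(p,t)$, while the sub-Bernoulli hyperparameters $(\eta, s, k_1, k_2)$ from \eqref{eq:stitching} must simultaneously produce the coefficients $1.5$ and $0.81$. The logit parameterization is essential: a uniform grid in $p$ would either require too many points near $0$ and $1$ or yield a bound whose $p$-dependence fails to reflect the correct $\sqrt{p(1-p)}$ scaling indicated by \cref{th:lower_bound}. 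Once the grid and weights are fixed, the remaining probability calculation is a direct application of Ville's inequality to the sub-Bernoulli supermartingales at each grid point, paralleling the proofs of \cref{th:fixed} and \cref{th:uniform_lil}, so no genuinely new probabilistic machinery is required.
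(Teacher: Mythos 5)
Your proposal matches the paper's proof in all essential respects: a ``double stitching'' over geometrically spaced time epochs and logit-uniform quantile grids whose spacing shrinks like $t^{-1/2}$, polynomial weights $(|j|\bmax 1)^{-s}$ and $(k+1)^{-s}$ allocated over the two indices to give the $\log(72/\alpha)$ term, sub-Bernoulli/sub-gamma supermartingale boundaries at each grid point, and monotonicity plus \cref{th:p_spacing}-type bookkeeping to absorb the discretization error into the leading $\delta\sqrt{2.1\, t\, r_{p,t}(1-r_{p,t})}$ term. The one thing to tighten is the starting object: rather than working directly with $M_t^{(x)} = t(\Fhat_t(x) - F(x))$ at grid-quantile points, the paper runs the whole argument through the interpolated martingale $S_t(p)$ from \eqref{eq:St_def}, proves the uniform bound \eqref{eq:uniform_S_bound} on $(S_t(p))$, and only then passes to \eqref{eq:double_stitching_con2} via \eqref{eq:Fgt_Fmlt} and to \eqref{eq:double_stitching_con1} via $Q^-(F(x)) \leq x \leq Q(F(x))$ and a right limit; this detour is what makes the discretization step $S_t(p) \leq S_t(p_{kj}) + t(p_{kj}-p)$ and the sub-Bernoulli property with parameters exactly $(p_{kj}, 1-p_{kj})$ hold cleanly even when $F$ has jumps and no $x$ satisfies $F(x) = p_{kj}$.
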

Note that $\gtil_t(p) = \Ocal(\sqrt{t \log t})$ as $t \to \infty$, while
$\gtil_t(p) = \Ocal(\log \abs{\log(1-p)})$ as $p \to 1$ and
$\gtil_t(p) = \Ocal(\log \abs{\log p})$ as $p \to 0$. Though the above
expressions look complicated, implementation is straightforward, and performance
in practice is compelling, as illustrated in \cref{fig:bounds}.

\subsection{Proof of \cref{th:uniform2}}\label{sec:proof_uniform}

We prove the result for a more general definition of $\widetilde{g}_t$. Fix
$\delta > 0$, a parameter controlling the fineness of the quantile grid, and fix
$\eta > 1$, $s > 1$, and $m \geq 1$ as in \eqref{eq:stitching}. We require the
following notation to state our bound:
\begin{align}
  r(p,t) &\defineas \begin{cases}
    p, & p \geq 1/2, \\
    \frac{1}{2} \bmin
      \logit^{-1}\eparen{\logit(p) + 2\delta\sqrt{\frac{m\eta}{t \bmax m}}},
    & p < 1/2
    \end{cases} \label{eq:rpt_defn} \\
  \sigma^2(p,t) &\defineas r(p,t) (1 - r(p,t)) \\
  j(p, t) &\defineas \sqrt{\frac{t \bmax m}{m}}
           \frac{\abs{\logit(p)}}{2\delta} + 1 \\
  \ell(p, t) &\defineas s \log\eparen{\log\pfrac{\eta (t \bmax m)}{m}}
    + s \log j(p, t)
    + \log\pfrac{2\zeta(s) (2\zeta(s) + 1)}{\alpha \log^s \eta}
    \label{eq:uniform_ell} \\
  c_p &\defineas \frac{1-2p}{3} \\
  \widetilde{g}_t(p) &\defineas
    \delta\sqrt{\frac{\eta (t \bmax m) \sigma^2(p,t)}{m}}
    + \sqrt{k_1^2 \sigma^2(p,t) (t \bmax m) \ell(p, t)
    + k_2^2 c_p^2 \ell^2(p, t)} + c_p k_2 \ell(p, t).
  \label{eq:uniform_quantile_bound}
\end{align}

Our strategy is to show that $\widetilde{g}_t$ yields a time- and
quantile-uniform boundary for the sequence of functions $S_t$,
\begin{align}
  \P\eparen{\exists t \in \N, p \in (0,1):
    S_t(p) \notin \eparen{-\widetilde{g}_t(1-p), \widetilde{g}_t(p)}}
  \leq \alpha, \label{eq:uniform_S_bound}
\end{align}
analogous to \eqref{eq:f_property}. From this, analogous to
\eqref{eq:generic_step_1}, we obtain
\begin{align}
  \P\eparen{\exists t \in \N, p \in (0,1):
    \Fhat_t(Q^-(p)) \leq p - \frac{\widetilde{g}_t(1-p)}{t} \text{ or }
    \Fhat^-_t(Q(p)) \geq p + \frac{\widetilde{g}_t(p)}{t}} \leq \alpha.
  \label{eq:generic_uniform}
\end{align}
Conclusion \eqref{eq:double_stitching_con2} follows from
\eqref{eq:generic_uniform} in the same way that \eqref{eq:generic_step_2}
follows from \eqref{eq:generic_step_1}. For conclusion
\eqref{eq:double_stitching_con1}, for any $x$, we may plug $p = F(x)$ into
\eqref{eq:generic_uniform} and use the inequalities
$Q^-(F(x)) \leq x \leq Q(F(x))$ to obtain
\begin{align}
  \Fhat_t(x) &> F(x) - \frac{\widetilde{g}_t(1-F(x))}{t} \quad \text{and}\\
  \Fhat^-_t(x) &< F(x) + \frac{\widetilde{g}_t(F(x))}{t},
                 \label{eq:lim_from_left}
\end{align}
both holding for all $t \in \N$ and $x \in \Xcal$ with probability at least
$1 - \alpha$. Taking a limit from the right in \eqref{eq:lim_from_left} shows
that $\Fhat_t(x) \leq F(x) + \widetilde{g}_t(F(x)) / t$, as desired.

To show that \eqref{eq:uniform_S_bound} holds, our argument is adapted from the
proof of Theorem 1 of \citet{howard_uniform_2018}. Similar to that proof, here
we divide time $t$ into an exponential grid of epochs demarcated by $m \eta^k$
for $k \in \Z_{\geq 0}$. For each epoch, we further divide quantile space
$(0,1)$ into a grid demarcated by $p_{kj}$ based on evenly-spaced log-odds. We
then choose error probabilities $\alpha_{kj}$ for each epoch in the
time-quantile grid, so that
$\sum_{k \geq 0} \sum_{j \in \Z} \alpha_{kj} \leq \alpha / 2$, giving a total
error probability of $\alpha/2$ for the upper bound on $S_t(p)$, with the
remaining $\alpha/2$ reserved for the lower bound.

We make use of the function
$\psi_{G,c}(\lambda) \defineas \lambda^2 / [2 (1 - c \lambda)]$ for each
$c \in \R$ \citep{howard_exponential_2018}. For each $k \in \Z_{\geq 0}$ and
$j \in \Z$, let
\begin{align}
  p_{kj} &\defineas \frac{1}{1 + \expebrace{-2 \delta j / \eta^{k/2}}}
    \label{eq:pkj_defn}, ~ \text{~ and} \\
  \alpha_{kj} &\defineas
    \frac{\alpha/2}{(k+1)^s (\abs{j} \bmax 1)^s \zeta(s) (2\zeta(s) + 1)}.
\end{align}
For the $(k,j)$ epoch in the time-quantile grid, we define the boundary
\begin{align}
  h_{kj}(t) &\defineas \frac{
    \log \alpha_{kj}^{-1}
    + \psi_{G,c_{kj}}(\lambda_{kj}) p_{kj} (1 - p_{kj}) t
  }{\lambda_{kj}},
\end{align}
where $c_{kj} \defineas (1 - 2 p_{kj}) / 3$, and $\lambda_{kj} \geq 0$ is chosen
so that $\psi_{G,c_{kj}}(\lambda_{kj}) = \log(\alpha_{kj}^{-1}) / \eta^{k+1/2}$
(note $\psi_{G,c_{kj}}(\lambda)$ increases from zero to $\infty$ as $\lambda$
increases from zero towards $1/c_{kj}$, so such a $\lambda_{kj}$ can always be
found). As in the proof of \cref{th:fixed}, we use the fact that $S_t(p)$ is a
sub-gamma process with scale $c = (1 - 2p) / 3$ and variance process
$V_t = p (1 - p) t$ for each $p \in (0,1)$. Then Theorem 1(a) of
\citet{howard_exponential_2018} implies that, for each $k \in \Z_{\geq 0}$ and
$j \in \Z$, we have
\begin{align}
  \P(\exists t \in \N: S_t(p_{kj}) \geq h_{kj}(t)) \leq \alpha_{kj}.
\end{align}
Taking a union bound over $k$ and $j$, we have $\P(\Gcal) \geq 1 - \alpha$ where
$\Gcal$ is the ``good'' event
\begin{align}
  \Gcal = \ebrace{
    S_t(p_{kj}) < h_{kj}(t),\,
    \forall k \in \Z_{\geq 0}, j \in \Z, t \in \N
  }.
\end{align}
Now fix any $t \in \N$ and $p \in (0,1)$, and let
\begin{align}
  k_t = \efloor{\log_\eta\pfrac{t \bmax m}{m}} \quad \text{and} \quad
  j_{tp} = \eceil{\frac{\eta^{k_t/2} \log(p/(1-p))}{2\delta}}.
  \label{eq:k_j_choices}
\end{align}
These choices ensure that $m \eta^{k_t} \leq t \bmax m < m \eta^{k_t + 1}$ and
$p_{k_t (j_{tp}-1)} < p \leq p_{k_t j_{tp}}$. From the definition of $S_t(p)$,
for any $p \in (0,1)$ we have, on the event $\Gcal$,
\begin{align}
  S_t(p) \leq S_t(p_{k_t j_{tp}}) + t(p_{k_t j_{tp}} - p)
  \leq h_{k_t j_{tp}}(t) + t(p_{k_t j_{tp}} - p). \label{eq:raw_uniform_bound}
\end{align}
The remainder of the argument involves upper bounding the right-hand side of
\eqref{eq:raw_uniform_bound} by an expression involving only $t$ and $p$ to
recover \eqref{eq:uniform_quantile_bound}.

To upper bound $h_{k_t j_{tp}}(t)$, we follow the steps in the proof of Theorem
1 of \citet{howard_uniform_2018} (see eq. 41) to find, for all $t \in \N$,
\begin{align}
  h_{k_t j_{tp}}(t) \leq \sqrt{
    k_1^2 (t \bmax m) p_{k_t j_{tp}} (1 - p_{k_t j_{tp}})
      \log \alpha_{k_t j_{tp}}^{-1}
    + c_{k_t j_{tp}}^2 k_2^2 \log^2 \alpha_{k_t j_{tp}}^{-1}}
    + c_{k_t j_{tp}} k_2 \log \alpha_{k_t j_{tp}}^{-1}.
  \label{eq:raw_h_bound}
\end{align}

Assume $p \geq 1/2$ (we will discuss the case $p < 1/2$ afterwards). Since
$p_{k_t j_{tp}} \geq p \geq 1/2$, we have
$p_{k_t j_{tp}} (1 - p_{k_t j_{tp}}) \leq p (1 - p) = r(p,t) (1 - r(p,t))$. By
\eqref{eq:k_j_choices}, we have $k_t \leq \log_\eta((t \bmax m)/m)$ and
$\abs{j_{tp}} \bmax 1 = j_{tp} \bmax 1 \leq \sqrt{(t \bmax m) / m} \log(p/(1-p))
/ (2\delta) + 1$. Hence
\begin{multline}
  \log \alpha_{k_t j_{tp}}^{-1}
    \leq s \log\eparen{\log_\eta\pfrac{t \bmax m}{m} + 1}
    + s\log\eparen{\sqrt{\frac{t \bmax m}{m}} \frac{\log(p/(1-p))}{2\delta} + 1}
    \\
    + \log\pfrac{\zeta(s) (2\zeta(s) + 1)}{\alpha}
  = \ell(p, t \bmax m).
  \label{eq:ell_bound}
\end{multline}
This completes the upper bound for $h_{k_t j_{tp}}(t)$; it remains to upper
bound $t(p_{k_t j_{tp}} - p)$. Note that, by the definition of $p_{kj}$,
\begin{align}
  \frac{p_{kj}}{1 - p_{kj}} = \expebrace{\frac{2 \delta j}{\eta^{k/2}}}.
  \label{eq:pkj_grid}
\end{align}
Our choice of $j_{tp}$ in \eqref{eq:k_j_choices} implies
\begin{align}
  \expebrace{\frac{2 \delta}{\eta^{k/2}}} \frac{p}{1-p}
  \geq \frac{p_{kj}}{1 - p_{kj}}.
\end{align}

The following technical result bounds the spacing between two probabilities in
terms of their odds ratio:
\begin{lemma}\label{th:p_spacing}
  Fix any $a > 0$ and $p \in [1/2, 1)$, and define $q_p$ by
  $q_p/(1-q_p) = e^a p / (1 - p)$. Then $q_p - p \leq (a/2) \sqrt{p (1 - p)}$.
\end{lemma}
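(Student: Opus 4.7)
The plan is to prove the bound by moving to logit coordinates and applying the mean value theorem. Writing $L(r) = \logit(r) = \log(r/(1-r))$, the defining relation $q_p / (1 - q_p) = e^a\, p / (1-p)$ translates to $L(q_p) - L(p) = a$. Since $L'(r) = 1/[r(1-r)]$, the mean value theorem supplies some $r^\star \in [p, q_p]$ with
\begin{equation*}
  a \;=\; L(q_p) - L(p) \;=\; \frac{q_p - p}{r^\star (1 - r^\star)},
  \qquad \text{equivalently} \qquad
  q_p - p \;=\; a \cdot r^\star (1 - r^\star).
\end{equation*}

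Next I would exploit the hypothesis $p \geq 1/2$. Because $a > 0$, we have $q_p \geq p \geq 1/2$, so $r^\star \geq 1/2$ as well. The map $r \mapsto r(1-r)$ is strictly decreasing on $[1/2, 1)$, hence $r^\star(1 - r^\star) \leq p(1-p)$, giving the intermediate bound $q_p - p \leq a \cdot p(1-p)$.

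Finally, I would factor $p(1-p) = \sqrt{p(1-p)} \cdot \sqrt{p(1-p)}$ and apply the universal inequality $\sqrt{p(1-p)} \leq 1/2$ to one of the factors, yielding $q_p - p \leq (a/2) \sqrt{p(1-p)}$, as desired. There is no real obstacle; the work is essentially done by observing that in logit coordinates the gap is exactly $a$, and that the derivative of $\logit^{-1}$ attains its maximum over $[p, q_p]$ at the left endpoint $p$ whenever $p \geq 1/2$. An alternative, equally short route is to derive the closed form $q_p - p = (e^a - 1) p(1-p) / [1 + (e^a - 1)p]$ by solving the defining equation, bound the denominator using $p \geq 1/2$ by $(e^a + 1)/2$, and invoke $\tanh(a/2) \leq a/2$; this avoids the mean value theorem but carries more algebra.
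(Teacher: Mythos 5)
Your proof is correct, and it takes a genuinely different route from the paper. The paper proceeds algebraically: it solves the defining relation to express $(q_p-p)/\sqrt{p(1-p)}$ in closed form as $\sqrt{p(1-p)}(e^a-1)/[1+p(e^a-1)]$, argues this is decreasing in $p$ on $[1/2,1)$ (numerator decreasing, denominator increasing), evaluates at $p=1/2$ to get $\tanh(a/2)$, and finally uses $\tanh x \leq x$. Your argument works in logit coordinates and applies the mean value theorem: the logit gap is exactly $a$, some $r^\star\in(p,q_p)$ gives $q_p-p = a\,r^\star(1-r^\star)$, the map $r\mapsto r(1-r)$ is decreasing on $[1/2,1)$ so $r^\star(1-r^\star)\leq p(1-p)$, and then $\sqrt{p(1-p)}\leq 1/2$ finishes it. Each step checks out. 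The MVT route is a bit more conceptual and avoids both the explicit solve and the $\tanh$ inequality; the paper's route yields the sharper intermediate constant $\tanh(a/2)$ near $p=1/2$, but that sharpness is discarded in the final statement anyway, so nothing is lost with your approach. Your last sentence correctly recognizes the paper's algebraic alternative as the other viable path.
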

We prove \cref{th:p_spacing} below. Invoking \cref{th:p_spacing} with
$a = 2 \delta / \eta^{k_t/2}$, we conclude
\begin{align}
  t(p_{k_t j_{tp}} - p)
  \leq t(q_p - p)
  \leq t \delta \sqrt{p (1 - p) / \eta^{k_t}}
  &\leq \delta \sqrt{\frac{\eta (t \bmax m) p (1 - p)}{m}} \\
  &= \delta \sqrt{\frac{\eta (t \bmax m) r(p,t) (1 - r(p,t))}{m}},
  \label{eq:gap_bound}
\end{align}
where the last step uses $\eta^{k_t+1} > (t \bmax m) / m$. Combining
\eqref{eq:raw_uniform_bound} with \eqref{eq:raw_h_bound}, \eqref{eq:ell_bound},
and \eqref{eq:gap_bound} yields the boundary $\widetilde{g}_t$.

The case $p < 1/2$ is very similar. Note that, by our choice of $j_{tp}$ in
\eqref{eq:k_j_choices} and the definitions \eqref{eq:pkj_defn} of $p_{kj}$ and
\eqref{eq:rpt_defn} of $r(p,t)$, we are assured
$p \leq p_{k_t j_{tp}} \leq r(p,t) \leq 1/2$. Starting at the step below
\eqref{eq:raw_h_bound}, we again have
$p_{k_t j_{tp}} (1 - p_{k_t j_{tp}}) \leq r(p,t) (1 - r(p,t))$, as
desired. Also,
$\abs{j_{tp}} \bmax 1 = -j_{tp} \bmax 1 \leq \sqrt{t} \abs{\log(p/(1-p))} /
(2\delta) + 1$, as desired. This shows that \eqref{eq:ell_bound} continues to
hold. Finally, using \cref{th:p_spacing}, we have
\begin{align}
  t(p_{k_t j_{tp}} - p) = t((1 - p) - (1 - p_{k_t j_{tp}}))
  &\leq \delta
       \sqrt{\frac{\eta (t \bmax m) (1 - p_{k_t j_{tp}}) p_{k_t j_{tp}}}{m}}\\
  &\leq \delta \sqrt{\frac{\eta (t \bmax m) r(p,t) (1 - r(p,t))}{m}},
\end{align}
showing \eqref{eq:gap_bound} holds.

We have thus verified the high-probability, time- and quantile-uniform upper
bound $S_t(p) \leq \widetilde{g}_t(p)$ in \eqref{eq:uniform_S_bound}. For the
lower bound, we repeat the above argument to construct a time- and
quantile-uniform upper bound on $\Stil_t(p) = -S_t(1 - p)$. The process
$(\Stil_t(p))_{t=1}^\infty$ is also sub-gamma with scale $(1 - 2p) / 3$, and for
$0 < p_1 < p_2 < 1$, the relation
$\Stil_t(p_1) \leq \Stil_t(p_2) + t(p_2 - p_1)$ continues to hold, so that the
step leading to inequality \eqref{eq:raw_uniform_bound} remains valid. Then the
above argument yields $\Stil_t(p) \leq \widetilde{g}_t(p)$ uniformly over $t$
and $p$ with high probability, i.e., $S_t(p) \geq -\widetilde{g}_t(1 - p)$, as
required in \eqref{eq:uniform_S_bound}. \qed

\begin{proof}[Proof of \cref{th:p_spacing}]
  Some algebra shows that
  \begin{align}
    \frac{q - p}{\sqrt{p(1-p)}}
    = \frac{\sqrt{p(1-p)} (e^a - 1)}{1 + p(e^a -1 )}.
  \end{align}
  For $p = 1/2$, the right-hand side is decreasing in $p$, hence is maximized at
  $p = 1/2$:
  \begin{align}
    \frac{q - p}{\sqrt{p(1-p)}}
    \leq \frac{e^a - 1}{e^a + 1} = \tanh(a/2).
  \end{align}
  Since $\left. \frac{d}{\d x} \tanh x \right|_{x = 0} = 1$ and
  $\frac{d^2}{\d x^2} \tanh x \leq 0$ for $x \geq 0$, we have $\tanh x \leq x$
  for $x \geq 0$, from which the conclusion follows.
\end{proof}

\section{Sequential hypothesis tests based on quantiles}
\label{sec:hypothesis_tests}

\subsection{Quantile A/B testing}

A/B testing, the use of randomized experiments to compare two or more versions
of an online experience, is a widespread practice among internet firms
\citep{kohavi_online_2013}. While most A/B tests compare treatments by mean
outcome, in many cases it is preferable to compare quantiles, for example to
evaluate response latency \citep{liu_large-scale_2019}. In such experiments, our
\cref{th:fixed}, \cref{th:uniform_lil_quantiles}, and \cref{th:uniform2} may be
used to sequentially estimate quantiles on each treatment arm, and the resulting
confidence bounds can be viewed as often as one likes without risk of inflated
miscoverage rates. However, it is typically more desirable to estimate the
difference in quantiles between two treatment arms. Naturally, simultaneous
confidence bounds for the arm quantiles can be used to accomplish this goal: the
minimum and maximum distances between points in the per-arm confidence intervals
yield bounds on the difference in quantiles. Furthermore, by finding the
smallest $\alpha \in (0,1)$ such that the two arms have disjoint confidence
intervals, an always-valid $p$-value process is obtained for testing the null
hypothesis of equal quantiles \citep{johari_always_2015}. However, the following
result gives tighter bounds by more efficiently combining evidence from both
arms to directly estimate the difference in quantiles.

In order for distances between quantiles to be well-defined, $\Xcal$ must be a
metric space, and we assume $\Xcal = \R$ for simplicity. We continue to operate
in the multi-armed bandit setup of \cref{sec:bai} with $K = 2$, and use the same
notation: $Q_k$ denotes the right-continuous quantile function for arm
$k \in \brace{1,2}$, $\Fhat_{k,t}$ and $\Qhat_{k,t}$ denote the empirical CDF
and right-continuous empirical quantile function for arm $k$ at time $t \in \N$,
and $N_{k,t}$ denotes the number of samples observed from arm $k$ at time
$t$. As in \cref{sec:bai}, the choice of which arm to sample at time $t$ may
depend on the past in an arbitrary manner. Fix $p \in (0,1)$, the quantile of
interest, and $r > 0$, the same tuning parameter used in $\ftil$ of
\cref{th:fixed}.

We wish to estimate the quantile difference $Q_2(p) - Q_1(p)$. Recall the
definition of $M_{p,r}$ from \eqref{eq:beta_mixture}, and define the following
one-sided variant based on Proposition 7 of \citet{howard_uniform_2018}. Write
$B_x(a,b) = \int_0^x p^{a-1} (1-p)^{b-1} \d p$ for the incomplete beta function,
and define
\begin{align}
M^1_{p,r}(s,v) &~\defineas~
  \frac{1}{p^{v/(1 - p) + s} (1 - p)^{v/p - s}} \cdot
  \frac{B_{1-p}\eparen{\frac{r+v}{p}-s, \frac{r+v}{1 - p}+s}}
       {B_{1-p}\eparen{\frac{r}{p}, \frac{r}{1 - p}}}.
\end{align}
For each $k$ and $t$, let
$\Dcal_{k,t}(x) \defineas \ebracket{ \Fhat^-_{k,t}(x), \Fhat_{k,t}(x)}$ and
define $G_{k,t}$, $G^+_{k,t}$, and $G^-_{k,t}$ by
\begin{align}
  G_{k,t}(x) &\defineas
    \min_{a \in \Dcal_{k,t}(x)}
      \log M_{p,r}\Big((a-p) N_{k,t},\, p (1 - p) N_{k,t}\Big),
  \label{eq:ab_G} \\
  G^+_{k,t}(x) &\defineas
    \log M^1_{p,r}\Big((\Fhat^-_{k,t}(x) - p) N_{k,t},\,
                      p (1 - p) N_{k,t}\Big), \\
  G^-_{k,t}(x) &\defineas
    \log M^1_{1-p,r}\Big(-(\Fhat_{k,t}(x) - p) N_{k,t},\,
                        p (1 - p) N_{k,t}\Big).
\end{align}
As detailed in the proofs, the functions $G_{k,t}$, $G^+_{k,t}$, and $G^-_{k,t}$
give the logarithm of the minimum possible value of an appropriate
supermartingale, under the premise that $Q_k(p) = x$. A large value of $G$
indicates that the supermartingale must be large, which in turn gives evidence
against the premise $Q_k(p) = x$. With the above definitions in place, we are
ready to state the main result of this section.

\begin{theorem}[Two-sample sequential quantile tests]\label{th:quantile_ab}
  For any $\alpha \in (0,1)$, $p \in (0,1)$ and $r > 0$, under
  the two-sided null hypothesis $H_0: Q_2(p) - Q_1(p) = \delta_\star$, we have
  \begin{align}
    \P\eparen{\exists t \in \N:
      \min_{x \in \R} \ebracket{G_{1,t}(x) + G_{2,t}(x + \delta^\star)}
      \geq \log \alpha^{-1}} &\leq \alpha. \label{eq:ab_two_tailed}
  \end{align}
  Furthermore, under the one-sided null hypothesis
  $H_0: Q_2(p) - Q_1(p) \leq \delta_\star$, we have
  \begin{align}
    \P\eparen{\exists t \in \N:
      \min_{x \in \R} \ebracket{G^+_{1,t}(x) + G^-_{2,t}(x + \delta^\star)}
      \geq \log \alpha^{-1}} &\leq \alpha. \label{eq:ab_one_tailed}
  \end{align}
\end{theorem}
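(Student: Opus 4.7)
The plan is to dominate each of the functions $G$, $G^+$, $G^-$ by the logarithm of a known nonnegative supermartingale, and then combine the arm-specific supermartingales into a product to which Ville's inequality applies. For each arm $k \in \{1,2\}$, let $S^{(k)}_n(p)$ be the martingale built as in \eqref{eq:St_def} from the first $n$ observations from arm $k$, using the tie-breaking weight $\pi_k(p)$ from \eqref{eq:pi_defn} applied to $F_k$. As in the proof of \cref{th:fixed} and by Proposition 7 of \citet{howard_uniform_2018}, each of
\begin{align*}
L^{(k)}_n &\defineas M_{p,r}(S^{(k)}_n,\, p(1-p) n), \\
\widetilde{L}^{(k),+}_n &\defineas M^1_{p,r}(S^{(k)}_n,\, p(1-p) n), \\
\widetilde{L}^{(k),-}_n &\defineas M^1_{1-p,r}(-S^{(k)}_n,\, p(1-p) n)
\end{align*}
is a nonnegative supermartingale (in $n$) with initial value at most one. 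Because $N_{k,t}$ is a stopping time in the joint filtration and $S^{(k)}$ depends only on arm-$k$ draws, optional stopping turns each into a supermartingale indexed by $t$ via $n = N_{k,t}$; independence across arms then makes any product of such arm-specific supermartingales itself a supermartingale starting at one.

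For the two-sided assertion \eqref{eq:ab_two_tailed}, set $x^\star \defineas Q_1(p)$, so that under $H_0$ we have $Q_2(p) = x^\star + \delta^\star$. The sandwich property driving the definition of $S^{(k)}$, namely $\Fhat^-_{k,t}(Q_k(p)) \leq p + S^{(k)}_{N_{k,t}}/N_{k,t} \leq \Fhat_{k,t}(Q_k(p))$, asserts exactly that $p + S^{(k)}_{N_{k,t}}/N_{k,t}$ lies in $\Dcal_{k,t}(Q_k(p))$. Since $G_{k,t}$ minimizes over that set, $G_{k,t}(Q_k(p)) \leq \log L^{(k)}_t$, and therefore
\begin{equation*}
\min_{x \in \R}\ebracket{G_{1,t}(x) + G_{2,t}(x + \delta^\star)} \leq G_{1,t}(x^\star) + G_{2,t}(x^\star + \delta^\star) \leq \log(L^{(1)}_t L^{(2)}_t).
\end{equation*}
Ville's inequality applied to the supermartingale $(L^{(1)}_t L^{(2)}_t)$ yields \eqref{eq:ab_two_tailed}.

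For the one-sided assertion \eqref{eq:ab_one_tailed}, take the same $x^\star = Q_1(p)$, so that under $H_0$ we have $Q_2(p) \leq x^\star + \delta^\star$. From \eqref{eq:St_def} directly, $(\Fhat^-_{1,t}(x^\star) - p) N_{1,t} \leq S^{(1)}_{N_{1,t}}$; monotonicity of $\Fhat_{2,t}$ combined with \eqref{eq:St_def} gives $(\Fhat_{2,t}(x^\star + \delta^\star) - p) N_{2,t} \geq S^{(2)}_{N_{2,t}}$, equivalently $-(\Fhat_{2,t}(x^\star + \delta^\star) - p) N_{2,t} \leq -S^{(2)}_{N_{2,t}}$. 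Since $M^1_{p,r}$ and $M^1_{1-p,r}$ are integrals of $e^{\lambda s - \psi(\lambda) v}$ against a mixing measure supported on $\lambda > 0$, both are strictly increasing in $s$ for each fixed $v$, so the pointwise inequalities lift to $G^+_{1,t}(x^\star) \leq \log \widetilde{L}^{(1),+}_t$ and $G^-_{2,t}(x^\star + \delta^\star) \leq \log \widetilde{L}^{(2),-}_t$. Ville's inequality applied to $(\widetilde{L}^{(1),+}_t \widetilde{L}^{(2),-}_t)$ finishes the argument.

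The main technical hurdle is checking monotonicity and supermartingale status of the one-sided mixtures, especially $\widetilde{L}^{(k),-}$: this is seen by observing that $-S^{(k)}_n$ is the centered partial sum of the $\mathrm{Bernoulli}(1-p)$ random variables $1 - \indicator{X_{k,i} < Q_k(p)} - \pi_k(p)\indicator{X_{k,i} = Q_k(p)}$, so Proposition 7 of \citet{howard_uniform_2018} applies with $p$ replaced by $1-p$. A secondary subtlety, already handled, is that the sandwich inequalities are only guaranteed at the true (unknown) values $Q_k(p)$, which is why the argument only uses the value of the minimum at $x^\star = Q_1(p)$; this point need not be known to the analyst because the test statistic explicitly minimizes over $x$.
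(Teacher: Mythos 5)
Your proposal follows the same outline as the paper's proof: build per-arm martingales $S^{(k)}$, wrap them in the (one- and two-sided) beta-binomial mixture supermartingales $L^{(k)}_t$, $\widetilde{L}^{(k),\pm}_t$ from \citet[Propositions 6 and 7]{howard_uniform_2018}, use the sandwich property (respectively, monotonicity of $M^1$) to dominate $G_{k,t}$, $G^\pm_{k,t}$ evaluated at the true quantiles by the logarithms of these supermartingales, multiply across arms, bound via Ville's inequality, and plug in $x^\star = Q_1(p)$ to lower-bound the minimum over $x$. The evaluations, the monotonicity argument for the one-sided case, and the treatment of the nuisance $\pi_k(p)$ via the interval $\Dcal_{k,t}$ are all handled correctly and essentially as in the paper.

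The one step that does not hold up as stated is the justification for the product being a supermartingale: ``optional stopping turns each into a supermartingale indexed by $t$ via $n = N_{k,t}$; independence across arms then makes any product of such arm-specific supermartingales itself a supermartingale.'' Because the allocation is allowed to be adaptive, the time changes $N_{1,t}$ and $N_{2,t}$ both depend on the joint history, so the time-changed processes $L^{(1)}_t$ and $L^{(2)}_t$ are \emph{not} independent as processes and do not live in a product filtration; product-of-independent-supermartingales reasoning therefore does not apply directly. The paper's argument is the correct one and is simple: the allocation is $(\Fcal_{t-1})$-predictable and exactly one arm is sampled per round, so if arm~1 is sampled at time $t$ then $L^{(2)}_t = L^{(2)}_{t-1}$ is $\Fcal_{t-1}$-measurable and $\E[L^{(1)}_t L^{(2)}_t \mid \Fcal_{t-1}] = L^{(2)}_{t-1}\, \E[L^{(1)}_t \mid \Fcal_{t-1}] \le L^{(1)}_{t-1} L^{(2)}_{t-1}$, with the symmetric case when arm~2 is sampled. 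Arm independence enters only through the conditional increment of the sampled arm, not through product-of-independent-processes reasoning. Replace your ``independence across arms'' sentence with this per-round conditional computation and the proof is complete.
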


\Cref{th:quantile_ab} gives two-sided or one-sided sequential hypothesis tests
for a given difference in quantiles between two arms. Inverting the two-sided
test \eqref{eq:ab_two_tailed} yields a confidence sequence: with probability at
least $1 - \alpha$, for all $t \in \N$, the quantile difference
$Q_2(p) - Q_1(p)$ is contained in the set
\begin{align}
  \ebrace{\delta \in \R:
    \min_{x \in \R} \ebracket{G_{1,t}(x) + G_{2,t}(x + \delta)}
    < \log \alpha^{-1}}.
\end{align}
Alternatively, we can obtain a two-sided, always-valid $p$-value process from
\eqref{eq:ab_two_tailed} for the null hypothesis $H_0: Q_2(p) = Q_1(p)$,
\begin{align}
  p^{(2)}_t = \expebrace{-\min_{x \in \R} \ebracket{G_{1,t}(x) + G_{2,t}(x)}},
\end{align}
or a one-sided, always-valid $p$-value process from \eqref{eq:ab_one_tailed}
testing $H_0: Q_2(p) \leq Q_1(p)$,
\begin{align}
  p^{(1)}_t = \expebrace{
    -\min_{x \in \R} \ebracket{G^+_{1,t}(x) + G^-_{2,t}(x)}}.
\end{align}
Each always-valid $p$-value process satisfies
$\P(\exists t \in \N: p_t \leq x) \leq x$ for all $x \in (0,1)$, so $p_t$ serves
as a valid $p$-value regardless of how the experiment is stopped, adaptively or
otherwise \citep{johari_always_2015}. Note that, since these $p$-values only
involve evaluating $h_t(x, 0)$, they can be used when $\Xcal$ is not a metric
space.

The proof of \cref{th:quantile_ab} is given in \cref{sec:proof_quantile_ab}, and
exploits the product supermartingale technique of
\citet{kaufmann_mixture_2018}. In brief, for each individual arm, we have a
nonnegative supermartingale quantifying information about the true quantile for
that arm, and the product of these two supermartingales will still be a
supermartingale, one which jointly captures evidence against the null from both
arms. We use the one- and two-sided beta-binomial mixture supermartingales from
\citet[Propositions 6 and 7]{howard_uniform_2018}, as with
\cref{th:fixed}(b). Other supermartingales are available, but the beta-binomial
mixture performs well in practice, as we have discussed in
\cref{sec:graphical}. \Cref{sec:ab_impl} discusses implementation details for
the necessary optimizations in \eqref{eq:ab_two_tailed} and
\eqref{eq:ab_one_tailed}, which require $\Ocal(t \log t)$ time in the worst
case.

\begin{figure}
  \centering
  \includegraphics{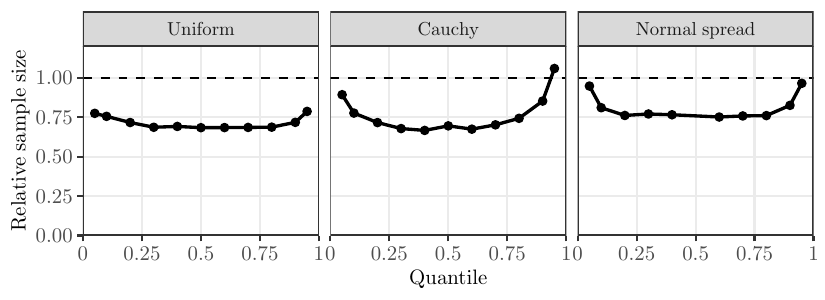}
  \caption{Average ratio of sample size for \cref{th:quantile_ab} to sample size
    for naive strategy of stopping when per-arm confidence intervals are
    disjoint, based on 256 simulation runs. All simulations involve sampling
    each of two arms in alternation and conducting a two-sided sequential test
    for equality of the given quantile with $\alpha = 0.05$. Arm distributions
    are identical to those in \cref{fig:bai}. \Cref{th:quantile_ab} reduces the
    necessary sample size by about 25\% in most cases, although the advantage
    diminishes for extreme quantiles, and becomes a slight disadvantage for the
    case of testing the 95\%ile of a Cauchy
    distribution. \label{fig:ab_simulations}}
\end{figure}

\Cref{fig:ab_simulations} illustrates the performance of the two-sided test
\eqref{eq:ab_two_tailed} relative to the naive strategy mentioned at the
beginning of this section, based on simultaneously-valid confidence sequences
for the mean of each arm. Across most scenarios, \cref{th:quantile_ab} achieves
significance with about 25\% fewer samples than the naive strategy. The
exceptional cases involve extreme quantiles, with $p$ close to zero or one. In
these cases, the minimization over $x$ in \eqref{eq:ab_two_tailed}, which
requires that all values of $x$ are implausible based on combined evidence,
sometimes leads to more conservative behavior than the use of simultaneous
confidence sequences, which require only the existence of some value of $x$
which is implausible for both arms.

Typically, A/B tests are run with a single control or baseline arm to be
compared against multiple treatment arms \citep{kohavi_controlled_2009}. In such
cases, rather than computing a $p$-value for each pairwise comparison of
treatment arm to control, we may wish to compute a $p$-value for the null
hypothesis that the control is no worse than any of the treatment
arms. Formally, we have $K$ arms in total, arm $k = 1$ is the control arm, and
we wish to test the global null $H_0: Q_1(p) \geq \max_k Q_k(p)$. Note
$H_0 = \intersect_{k \geq 2} H_{0k}$, where we define
$H_{0k}: Q_1(p) \geq Q_k(p)$ for $k = 2, \dots, K$. Using a Bonferroni
correction across $k = 2, \dots, K$, it follows that
\begin{align}
  p_t = (K - 1) \expebrace{
    -\max_{k = 2, \dots, K} \min_{x \in \R}
    \ebracket{G^+_{1,t}(x) + G^-_{k,t}(x)}}
\end{align}
gives an always-valid $p$-value process for the global null $H_0$.

Any of the $p$-values obtained in this section may be used for online control of
the false discovery rate in large-scale, ``doubly-sequential'' experimentation,
when one is faced with a potentially infinite sequence of sequential experiments
\citep{yang_framework_2017,zrnic2018asynchronous}.

\subsection{Sequential Kolmogorov-Smirnov tests and a test of stochastic dominance}

As an easy consequence of \cref{th:uniform_lil}, we obtain a sequential analogue
of the one-sample Kolmogorov-Smirnov test. Suppose we wish to sequentially test
the null hypothesis $H_0: F = F_0$ for some fixed distribution $F_0$. Write
\begin{align}
  C(A, \alpha) \defineas \inf\ebrace{c > 0:
      \alpha_{A,c}
      \leq \alpha},
\end{align}
where $\alpha_{A,c}$ is defined in \cref{th:uniform_lil}.
\begin{corollary}
  For any $\alpha \in (0,1)$ and $A > 1 / \sqrt{2}$, the test which rejects
  $H_0: F = F_0$ as soon as
  $\norm{\Fhat_t - F_0}_\infty > A \sqrt{t^{-1} (\log \log(et/m) + C(A,
    \alpha))}$ gives a valid, open-ended sequential test of $H_0$ with power
  one. That is, if $H_0$ is true, the probability of stopping is at most
  $\alpha$, while if $H_0$ is false, the probability of stopping is one.
\end{corollary}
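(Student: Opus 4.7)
The corollary splits cleanly into two claims, and I would handle them in succession, with the bulk of the work already done by \cref{th:uniform_lil} and Glivenko--Cantelli.

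For the validity (Type~I) bound, the plan is essentially a direct appeal to \cref{th:uniform_lil}. Under $H_0$, we have $\Fhat_t - F_0 = \Fhat_t - F$, so the stopping event is precisely
\[
\ebrace{\exists t \geq m : \enorm{\Fhat_t - F}_\infty > A\sqrt{t^{-1}(\log\log(et/m) + C(A,\alpha))}},
\]
which has probability at most $\alpha_{A, C(A,\alpha)}$. The only thing to check is that $\alpha_{A, C(A,\alpha)} \leq \alpha$; for this I would first verify monotonicity of $c \mapsto \alpha_{A, c}$ (clear from the definition, since increasing $c$ shrinks $\gamma(A,c,\eta)$ monotonically? actually increasing $c$ makes the exponent $-\gamma^2 c$ more negative, so $\alpha_{A,c}$ is decreasing in $c$), combined with the fact that $\alpha_{A,c}\to 0$ as $c\to\infty$, so $C(A,\alpha)$ is a finite well-defined infimum and attained as a limit. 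If the infimum is not attained one may need to argue using $c' = C(A,\alpha) + \varepsilon$ and let $\varepsilon \downarrow 0$; this is a minor technical point.

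For the power-one claim, suppose $H_0$ fails, so $\enorm{F - F_0}_\infty = \delta > 0$ (right-continuity of both CDFs guarantees the sup norm is attained, or at least strictly positive). The plan is to combine two ingredients: (i) the classical Glivenko--Cantelli theorem, which gives $\enorm{\Fhat_t - F}_\infty \to 0$ almost surely, and (ii) the trivial observation that the boundary $A\sqrt{t^{-1}(\log\log(et/m) + C(A,\alpha))} \to 0$ as $t \to \infty$. By the reverse triangle inequality,
\[
\enorm{\Fhat_t - F_0}_\infty \;\geq\; \enorm{F - F_0}_\infty - \enorm{\Fhat_t - F}_\infty \;\longrightarrow\; \delta > 0
\]
almost surely, whereas the stopping boundary tends to zero. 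Hence with probability one there exists some $t$ at which the boundary is crossed, i.e., the test stops with probability one. As a cleaner alternative to invoking Glivenko--Cantelli, I could instead take $C \to \infty$ in \eqref{eq:empirical_io} of \cref{th:uniform_lil}, which also yields $\enorm{\Fhat_t - F}_\infty \to 0$ a.s.\ and keeps the proof self-contained within the paper's own machinery.

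The main obstacle, if one can call it that, is purely definitional: checking that $C(A,\alpha)$ is a meaningful finite quantity and that $\alpha_{A,C(A,\alpha)} \leq \alpha$ even when the infimum might not be attained. Once this is settled, both halves of the corollary follow in a few lines, so I would not expect to spend serious effort on anything else.
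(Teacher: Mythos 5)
Your proposal is correct and follows the paper's own route: Type I control is the definition of $C(A,\alpha)$ combined with \cref{th:uniform_lil} (with the monotonicity-of-$\alpha_{A,c}$ observation being exactly the implicit justification), and power one is Glivenko--Cantelli together with the boundary tending to zero, which is precisely the one-line remark the paper gives after the corollary. One small note: in your parenthetical aside, increasing $c$ actually \emph{increases} $\gamma(A,c,\eta)$ (since $\sqrt{2(\eta-1)/c}$ shrinks), which is what makes $\alpha_{A,c}$ decreasing in $c$; you reach the right conclusion but the initial half-thought runs the wrong direction.
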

The fact that this test has power one follows from the Glivenko-Cantelli theorem
and the fact that the boundary becomes arbitrarily small,
$A \sqrt{t^{-1} (\log \log(et/m) + C(A, \alpha))} \to 0$ as $t \to \infty$
\citep{robbins_statistical_1970}. A sequential two-sample test follows from an
application of the triangle inequality and a union bound, by applying
\cref{th:uniform_lil} to each sample with error probability $\alpha/2$. Here we
suppose $(X_t)_{t=1}^\infty$ are i.i.d.\ from distribution $F$, while
$(Y_t)_{t=1}^\infty$ are i.i.d.\ from distribution $G$, and we wish to test the
null hypothesis $H_0: F = G$. We denote the empirical CDF of $Y_1, \dots, Y_t$
by $\Ghat_t$.
\begin{corollary}\label{th:ks_two_sample}
  For any $\alpha \in (0,1)$ and $A > 1 / \sqrt{2}$, the test which rejects
  $H_0: F = G$ as soon as
  $\norm{\Fhat_t - \Ghat_t}_\infty > 2A \sqrt{t^{-1} (\log \log(et/m) + C(A,
    \alpha / 2))}$ gives a valid, open-ended sequential test of $H_0$ with power
  one.
\end{corollary}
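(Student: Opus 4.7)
The plan is to establish both claims through a direct application of Theorem~\ref{th:uniform_lil} (applied separately to each sample) combined with the triangle inequality, mirroring the strategy behind the one-sample version.

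For the type-I error bound, I would proceed as follows. Under $H_0: F=G$, decompose
\begin{equation*}
  \norm{\Fhat_t - \Ghat_t}_\infty
  \leq \norm{\Fhat_t - F}_\infty + \norm{\Ghat_t - F}_\infty.
\end{equation*}
By the definition of $C(A, \alpha/2)$, Theorem~\ref{th:uniform_lil} applied to $(X_t)$ yields
\begin{equation*}
  \P\bigl(\exists t \geq m: \norm{\Fhat_t - F}_\infty
    > A\sqrt{t^{-1}(\log\log(et/m) + C(A, \alpha/2))}\bigr) \leq \alpha/2,
\end{equation*}
and the same bound holds for $\Ghat_t$ relative to $F=G$. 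A union bound shows that, except on an event of probability at most $\alpha$, both deviations are simultaneously bounded by $A\sqrt{t^{-1}(\log\log(et/m)+C(A,\alpha/2))}$ for every $t \geq m$, so by the triangle inequality their sum never crosses $2A\sqrt{t^{-1}(\log\log(et/m)+C(A,\alpha/2))}$. Hence the rejection event has probability at most $\alpha$.

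For the power-one claim, suppose $H_0$ fails so that $\delta \defineas \norm{F - G}_\infty > 0$. The Glivenko--Cantelli theorem gives $\norm{\Fhat_t - F}_\infty \to 0$ and $\norm{\Ghat_t - G}_\infty \to 0$ almost surely, and a reverse triangle inequality
\begin{equation*}
  \norm{\Fhat_t - \Ghat_t}_\infty
  \geq \norm{F - G}_\infty - \norm{\Fhat_t - F}_\infty - \norm{\Ghat_t - G}_\infty
\end{equation*}
then yields $\liminf_{t\to\infty}\norm{\Fhat_t - \Ghat_t}_\infty \geq \delta$ a.s. On the other hand the rejection threshold $2A\sqrt{t^{-1}(\log\log(et/m)+C(A,\alpha/2))}$ tends to $0$ as $t \to \infty$, so the threshold is eventually smaller than $\delta/2$. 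Therefore the test rejects in finite time with probability one.

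I do not foresee a serious obstacle: the main content is already carried by Theorem~\ref{th:uniform_lil}, and the only step requiring care is bookkeeping the constant $C(A,\alpha/2)$ so that each sample contributes crossing probability at most $\alpha/2$, giving the factor of $2$ in the two-sample threshold. Everything else is routine triangle-inequality and Glivenko--Cantelli reasoning.
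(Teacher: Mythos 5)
Your argument is correct and coincides with the paper's intended proof: apply \cref{th:uniform_lil} to each sample at level $\alpha/2$, combine via a union bound and the triangle inequality to control type-I error, then invoke Glivenko--Cantelli together with the fact that the boundary decays to zero to get power one. No gaps.
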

A one-sided variant of \cref{th:ks_two_sample} tests $H_0: F(x) < G(x)$ for some $x \in \Xcal$ or
$F(x) = G(x)$ for all $x \in \Xcal$ against $H_1: F(x) \geq G(x)$ for all $x \in \Xcal$ and
$F(x) > G(x)$ for some $x \in \Xcal$. This yields a sequential test of stochastic dominance.
\begin{corollary}\label{th:dominance}
  For any $\alpha \in (0,1)$ and $A > 1 / \sqrt{2}$, the test which rejects
  $H_0: F \leq G$ as soon as
  \begin{align}
    \inf_{x \in \Xcal}\ebracket{\Fhat_t(x) - \Ghat_t(x)} \geq 2A \sqrt{t^{-1}
    (\log \log(et/m) + C(A, \alpha))},
    \text{with strict inequality for some $x$},
  \end{align}
  gives a valid, open-ended sequential test of $H_0$ with power one.
\end{corollary}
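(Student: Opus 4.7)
My plan is to reduce the two-sample statement to the one-sample control afforded by \cref{th:uniform_lil}, applied separately to each empirical process, and then invoke Glivenko-Cantelli for the power claim. Throughout, I read the stopping rule as requiring the empirical difference $\Fhat_t-\Ghat_t$ to exceed the threshold at some point $x$ under the constraint that the profile $\Fhat_t-\Ghat_t$ stays at least as large as the threshold everywhere else; in particular this implies $\sup_{x}[\Fhat_t(x)-\Ghat_t(x)]>2A\sqrt{t^{-1}(\log\log(et/m)+C(A,\alpha))}$, which is what I will bound.

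For the type I error, I would start from the pointwise decomposition
\begin{align*}
\Fhat_t(x)-\Ghat_t(x)
 = [\Fhat_t(x)-F(x)] + [G(x)-\Ghat_t(x)] + [F(x)-G(x)].
\end{align*}
Under $H_0:F\leq G$ the last bracket is $\leq 0$ for every $x$, so taking supremum over $x$ and then an absolute value on each of the remaining terms gives
\begin{align*}
\sup_{x}[\Fhat_t(x)-\Ghat_t(x)]
 \leq \norm{\Fhat_t-F}_\infty + \norm{\Ghat_t-G}_\infty.
\end{align*}
I then apply \cref{th:uniform_lil} (or, more tightly, a one-sided variant obtained by taking only the $(\cdot)_{+}$ branch of \cref{th:empirical_maximal}, which saves the factor of $2$ in the union bound over $\pm$) separately to $(\Fhat_t)$ and $(\Ghat_t)$, at level $\alpha/2$ each. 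A final union bound shows that with probability at least $1-\alpha$, for every $t\geq m$ the right-hand side never exceeds $2A\sqrt{t^{-1}(\log\log(et/m)+C(A,\alpha))}$, so the test cannot reject.

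For power one I would use Glivenko-Cantelli: under the alternative there is some $x_0$ with $F(x_0)-G(x_0)=\eta>0$, and $\|\Fhat_t-F\|_\infty,\|\Ghat_t-G\|_\infty\to 0$ almost surely, so $\Fhat_t(x_0)-\Ghat_t(x_0)\to\eta$ a.s.\ while the threshold decays like $\sqrt{t^{-1}\log\log t}\to 0$; combined with $F\geq G$ everywhere (so that $\Fhat_t-\Ghat_t$ is eventually uniformly non-negative up to the same vanishing DKW/LIL envelope), the rejection condition eventually triggers with probability one.

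The main obstacle is the accounting in the two places where constants matter: (i) verifying that a one-sided refinement of \cref{th:uniform_lil} with crossing probability $\tfrac{1}{2}\alpha_{A,C}$ is what justifies using the same $C(A,\alpha)$ from the two-sided definition, so that the two-sample envelope really is $2A\sqrt{\cdot}$ rather than $2A\sqrt{t^{-1}(\log\log(et/m)+C(A,\alpha/2))}$; and (ii) reconciling the ``$\inf_x\geq$ threshold, strict for some $x$'' phrasing with the reality that $\Fhat_t-\Ghat_t$ vanishes at the extreme tails of $\Xcal$, which is why the supremum-based reading is the one that must be proved under $H_0$ and the one that Glivenko-Cantelli drives to rejection under $H_1$.
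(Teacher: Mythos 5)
Your proof matches the paper's own sketch (given in the remark immediately following the corollary): under $H_0$ drop $F - G \leq 0$, apply one-sided versions of \cref{th:uniform_lil} to $\Fhat_t - F$ and $G - \Ghat_t$ — each contributing $\alpha_{A,C}/2$ since the proof of \cref{th:uniform_lil} already splits the error evenly between the $+$ and $-$ sides — so a union bound gives total error $\alpha_{A,C}=\alpha$ with $C(A,\alpha)$, and Glivenko--Cantelli gives power one; your point~(i) correctly pins down this one-sided refinement, and your remark about reading the stopping rule via a supremum (rather than an infimum over all of $\Xcal$, which would never trigger) reflects the intended meaning. The one thing to tighten: once you drop $F-G$, keep the \emph{signed} one-sided suprema $\sup_x[\Fhat_t(x)-F(x)] + \sup_x[G(x)-\Ghat_t(x)]$ rather than passing through absolute values to $\|\Fhat_t-F\|_\infty + \|\Ghat_t-G\|_\infty$, since the two-sided sup-norm costs the full $\alpha_{A,C}$ per arm and would force $C(A,\alpha/2)$ in the threshold — the one-sided decomposition is precisely what makes $C(A,\alpha)$ work.
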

In \cref{th:dominance}, we are able to use error probability $\alpha$ in our
application of \cref{th:uniform_lil} to each sample, rather than $\alpha /
2$. This holds because we need only a one-sided confidence bound on each CDF
rather than the two-sided bound of \cref{th:uniform_lil}. Since the proof of
\cref{th:uniform_lil} involves a union bound over the upper and lower confidence
bounds, it yields valid one-sided bounds as well, each with half the total error
probability.

\section{Additional proofs}\label{sec:additional_proofs}

For reference, we present the full set of implications between $\Fhat_t$,
$\Fhat^-_t$, $\Qhat_t$, and $\Qhat^-_t$:
\begin{align}
  \Fhat_t(x) > p \enskip &\implies \enskip x \geq \Qhat_t(p)
    \label{eq:Fgt} \\
  \Fhat_t(x) \geq p \enskip &\Leftrightarrow \enskip x \geq \Qhat^-_t(p)
    \label{eq:Fgeq}\\
  \Fhat_t(x) < p \enskip &\Leftrightarrow \enskip x < \Qhat^-_t(p)
    \label{eq:Flt} \\
  \Fhat_t(x) \leq p \enskip &\implies \enskip x \leq \Qhat_t(p)
    \label{eq:Fleq} \\
  \Fhat^-_t(x) > p \enskip &\Leftrightarrow \enskip x > \Qhat_t(p)
    \label{eq:Fmgt} \\
  \Fhat^-_t(x) \geq p \enskip &\implies \enskip x \geq \Qhat^-_t(p)
    \label{eq:Fmgeq} \\
  \Fhat^-_t(x) < p \enskip &\implies \enskip x \leq \Qhat^-_t(p)
    \label{eq:Fmlt} \\
  \Fhat^-_t(x) \leq p \enskip &\Leftrightarrow \enskip x \leq \Qhat_t(p).
    \label{eq:Fmleq}
\end{align}

\subsection{Derivation of asymptotic expansion \eqref{eq:ftilde_expansion}}
\label{sec:derive_expansion}

The function $\widetilde{f}_t(p)$ defined in \eqref{eq:beta_bound} is an
instance of ($1/t$ times) a conjugate mixture boundary \citep[Section
3.2]{howard_uniform_2018}, and $M_{p,r}(s,v)$ defined in \eqref{eq:beta_mixture}
is a mixture supermartingale. Mixture supermartingale have the generic form
$\int \expebrace{\lambda s - \psi(\lambda) v} \d F(\lambda)$, and $M_{p,r}(s,v)$
is derived in Proposition 7 of \citet{howard_uniform_2018} using the function
$\psi$ defined above in \eqref{eq:bernoulli_psi} and a Beta distribution with
parameters $r/(1-p)$ and $r/p$ on the transformed parameter
$x = \ebracket{1 + \frac{1-p}{p} \exp(-\lambda)}^{-1}$ for $F$. Proposition 2 of
\citet{howard_uniform_2018} yields the generic asymptotic expansion
\begin{align}
  \sqrt{v \ebracket{c \log\pfrac{c v}{2\pi \alpha^2 f^2(0)} + o(1)}},
  \label{eq:generic_expansion}
\end{align}
where
\begin{itemize}
\item $v$ is the ``variance time'' argument, which we are taking as $p (1-p) t$
  in defining $\widetilde{f}_t(p)$;
\item $c = \psi''(0_+) = 1$; and
\item $f(0) = p (1 - p) f_\beta\eparen{p; \frac{r}{1-p}, \frac{r}{p}}$ is the
  density of the mixture distribution on $\lambda$, which is a transformed Beta
  distribution as noted above, at $\lambda = 0$.
\end{itemize}
Comparing \eqref{eq:generic_expansion} with \eqref{eq:ftilde_expansion}, we see
that
\begin{align}
  C_{p,r} = \sqrt{2\pi} f(0) =
    \sqrt{2 \pi} p (1 - p) f_\beta\eparen{p; \frac{r}{1-p}, \frac{r}{p}}.
\end{align}
Note that as $p \downarrow 0$ or $p \uparrow 1$,
\begin{align}
  C_{p,r} \sim \frac{\sqrt{2\pi} r^r}{e^r \Gamma(r)},
\end{align}
so that $C_{p,r}$ approaches a constant as $p \downarrow 0$ or $p \uparrow
1$. By Stirling's formula, this latter expression is asymptotic to $\sqrt{r}$ as
$r \uparrow \infty$.

\subsection{Proof of \cref{th:lower_bound}}\label{sec:proof_lower_bound}

The classical law of the iterated logarithm implies
\begin{align}
  \limsup_{t \to \infty}
  \frac{\Fhat_t(Q(p)) - p}{\sqrt{t^{-1} \log \log t}}
  = \sqrt{2 p (1 - p)},\, \text{a.s.}
\end{align}
Since
$\liminf_{t \to \infty} \sqrt{t^{-1} \log \log t} / u_t > 1 / \sqrt{2 p (1 -
  p)}$, we have
\begin{align}
  \limsup_{t \to \infty} \frac{\Fhat_t(Q(p)) - p}{u_t}
  \geq \eparen{\limsup_{t \to \infty}
               \frac{\Fhat_t(Q(p)) - p}{\sqrt{t^{-1} \log \log t}}}
       \eparen{\liminf_{t \to \infty} \frac{\sqrt{t^{-1} \log \log t}}{u_t}}
  > 1,\, \text{a.s.}
\end{align}
Hence, with probability one, there exists $t_0$ such that
$\Fhat_{t_0}(Q(p)) > p + u_{t_0}$. Then property \eqref{eq:Fgt} implies
$Q(p) \geq \Qhat_{t_0}(p + u_{t_0})$, which yields the desired conclusion. \qed

\subsection{Proof of \cref{th:upper_lil}}\label{sec:proof_upper_lil}

Fix any $\epsilon > 0$ and let $A_\epsilon = 1/\sqrt{2} + \epsilon$. Applying
\cref{th:uniform_lil} with $m = 1$ and any $C > 0$, the second result
\eqref{eq:empirical_io} implies
\begin{align}
  \limsup_{t \to \infty}
    \frac{\norm{\Fhat_t - F}_\infty}
         {A_\epsilon \sqrt{t^{-1} (\log \log(et) + C)}}
  = \limsup_{t \to \infty}
    \frac{\norm{\Fhat_t - F}_\infty}{A_\epsilon \sqrt{t^{-1} \log \log t}}
  \leq 1 \text{ almost surely}.
\end{align}
The conclusion follows since $\epsilon$ was arbitrary.

\subsection{Proof of \cref{th:uniform_lil_quantiles}}
\label{sec:proof_uniform_lil_quantiles}

\Cref{th:uniform_lil} implies that $\Fhat_t(Q^-(p)) \geq F(Q^-(p)) - g_t$
uniformly over $t \geq m$ and $p \in (0,1)$ with high probability. Hence
\eqref{eq:Fgeq} implies
$Q^-(p) \geq \Qhat^-_t(F(Q^-(p)) - g_t) \geq \Qhat^-_t(p - g_t)$. Likewise,
\cref{th:uniform_lil} implies $\Fhat_t(x) \leq F(x) + g_t$ uniformly over
$t \geq m$ and $x \in \Xcal$ with high probability, and taking limits from the
left, we also have $\Fhat^-_t(x) \leq F^-(x) + g_t$. Hence
$\Fhat^-_t(Q(p)) \leq F^-(Q(p)) + g_t$, and \eqref{eq:Fmleq} implies
$Q(p) \leq \Qhat_t(F^-(Q(p)) + g_t) \leq \Qhat_t(p + g_t)$. \qed

\subsection{Proof of \cref{th:quantile_ab}}\label{sec:proof_quantile_ab}

We extend the definition of $S_t(p)$ from \eqref{eq:St_def} to the two-armed
setup: for $k \in \brace{1, 2}$, let
\begin{align}
  \pi_k(p) \defineas
  \begin{cases}
    0, & F_k(Q_k(p)) = F_k^-(Q_k(p)), \\
    \frac{p - F_k^-(Q_k(p))}{F_k(Q_k(p)) - F^-(Q_k(p))},
      & F_k(Q_k(p)) > F_k^-(Q_k(p)),
  \end{cases} \label{eq:pik_defn}
\end{align}
and define $S_{k,0}(p) = 0$ and, for $t \in \N$,
\begin{align}
  S_{k,t}(p) \defineas \sum_{i=1}^{N_{k,t}}
    \ebracket{\indicator{X_{k,i} < Q_k(p)} +
      \pi_k(p) \indicator{X_{k,i} = Q_k(p)} - p}.
  \label{eq:Skt_def}
\end{align}
The increments are mean-zero and bounded in $[-p, 1 - p]$ conditional on the
past, so the process $(S_{k,t}(p))$ is sub-Bernoulli with variance process
$p (1 - p) t$ and scale parameters $g = p, h = 1 - p$ \citep[Fact
1(b)]{howard_exponential_2018}. Then the proof of Propositions 6 and 7 of
\citet{howard_uniform_2018} shows that the processes
\begin{align}
  L_{k,t} &\defineas M_{p,r}(S_{k,t}(p), p (1 - p) N_{k,t}), \\
  L^+_{k,t} &\defineas M^1_{p,r}(S_{k,t}(p), p (1 - p) N_{k,t}),
    \quad\text{and} \\
  L^-_{k,t} &\defineas M^1_{1-p,r}(-S_{k,t}(p), p (1 - p) N_{k,t})
\end{align}
are nonnegative supermartingales with
$\E L_{k,0} = \E L^+_{k,0} = \E L^-_{k,0} = 1$, with respect to the filtration
$(\Fcal_t)$ generated by the observations.

For the two-sided test, we form the product $\Ltil_t \defineas L_{1,t} L_{2,t}$,
which is also a nonnegative supermartingale. Indeed, if we choose to sample arm
1 at time $t$, a choice which is predictable with respect to $(\Fcal_t)$, then
$L_{2,t} = L_{2,t-1}$, so
$\E\condparen{\Ltil_t}{\Fcal_{t-1}} = L_{2,t-1}
\E\condparen{L_{1,t}}{\Fcal_{t-1}} \leq \Ltil_{t-1}$; likewise if we choose to
sample arm 2. Then Ville's inequality yields
\begin{align}
  \P\eparen{\exists t \in \N: \Ltil_t \geq \frac{1}{\alpha}} \leq \alpha.
  \label{eq:product_mg_ville}
\end{align}

Our goal is to lower bound $\Ltil_t$ under the null hypothesis
$H_0: Q_2(p) - Q_1(p) = \delta_\star$. Suppose we strengthen this hypothesis to
$Q_1(p) = x_1$ and $Q_2(p) = x_2 \defineas x_1 + \delta_\star$ for some
$x_1 \in \R$. We still cannot compute $S_{k,t}(p)$ without knowledge of
$\pi_k(p)$. But since $\pi_k(p) \in [0,1]$, we are assured
$S_{k,t}(p) / N_{k,t} \in \Dcal_{k,t}(x_k)$ for all $t$, so that
$\log L_{k,t}\geq G_{k,t}(x_k)$ for $k = 1, 2$, by the definitions of $L_{k,t}$
and $G_{k,t}$. Hence, on the stronger hypothesis, we have
\begin{align}
  \log \Ltil_t \geq G_{1,t}(x_1) + G_{2,t}(x_1 + \delta_\star),
  \quad \text{for all $t \in \N$}.
\end{align}
On $H_0$, then, we have
\begin{align}
  \log \Ltil_t \geq
    \min_{x \in \R} \ebracket{G_{1,t}(x) + G_{2,t}(x + \delta_\star)}
    \quad \text{for all $t \in \N$},
  \label{eq:product_mg_lower}
\end{align}
and the conclusion \eqref{eq:ab_two_tailed} for the two-sided test follows from
\eqref{eq:product_mg_ville} and \eqref{eq:product_mg_lower}.

For the one-sided test, we follow a similar argument. Form the product
$\Ltil^1_t \defineas L_{1,t}^+ L_{2,t}^-$, which is a supermartingale by an
analogous argument as that above for $\Ltil_t$. Ville's inequality yields
$\P\eparen{\exists t \in \N: \Ltil^1_t \geq 1 / \alpha} \leq \alpha$. Now since
$M^1_{p,r}(\cdot,v)$ is nondecreasing \citep[Appendix C and proof of Proposition
7]{howard_uniform_2018}, $G^+_{k,t}$ is nondecreasing while $G^-_{k,t}$ is
nonincreasing, which implies
\begin{align}
  G^+_{k,t}(x) &=
    \min_{a \in \Dcal_{k,t}(x)}
      \log M^1_{p,r}\eparen{(a-p) N_{k,t}, p (1 - p) N_{k,t}}, \\
  G^-_{k,t}(x) &=
    \min_{a \in \Dcal_{k,t}(x)}
      \log M^1_{1-p,r}\eparen{-(a-p) N_{k,t}, p (1 - p) N_{k,t}}.
\end{align}
Suppose we strengthen the null hypothesis to $Q_1(p) = x_1$ and
$Q_2(p) = x_2 \leq x_1 + \delta_\star$ for some $x_1, x_2 \in \R$. Then the
argument above shows that $\log L^\pm_{k,t} \geq G^{\pm}_{k,t}(x_k)$ for
$k = 1,2$, so that
\begin{align}
  \log \Ltil^1_t &\geq G^+_{1,t}(x_1) + G^-_{2,t}(x_2) \\
    &\geq G^+_{1,t}(x_1) + G^-_{2,t}(x_1 + \delta_\star),
\end{align}
since $x_2 \leq x_1 + \delta_\star$ and $G^-_{2,t}$ is nonincreasing. On
$H_0: Q_2(p) - Q_1(p) \leq \delta_\star$, then, we have
\begin{align}
  \log \Ltil^1_t \geq
    \min_{x \in \R} \ebracket{G^+_{1,t}(x) + G^-_{2,t}(x + \delta_\star)}
    \quad \text{for all $t \in \N$},
\end{align}
and the conclusion \eqref{eq:ab_one_tailed} for the one-sided test follows as
before. \qed

\section{Details of \cref{fig:bounds}}\label{sec:comparison_details}

Here we give details for each of the bounds presented in
\cref{fig:bounds}. Additionally, \cref{fig:bounds_full} includes all bounds
together in a single plot, along with two more bounds: the DKW bounds which is
uniform over quantiles for a fixed time, and the pointwise Bernoulli bound which
is valid for a fixed quantile at a fixed time. In all cases, we use a two-sided
error probability of 0.05, and all bounds are tuned for a minimum sample size of
$m = 32$.

\begin{figure}
  \centering
  \includegraphics{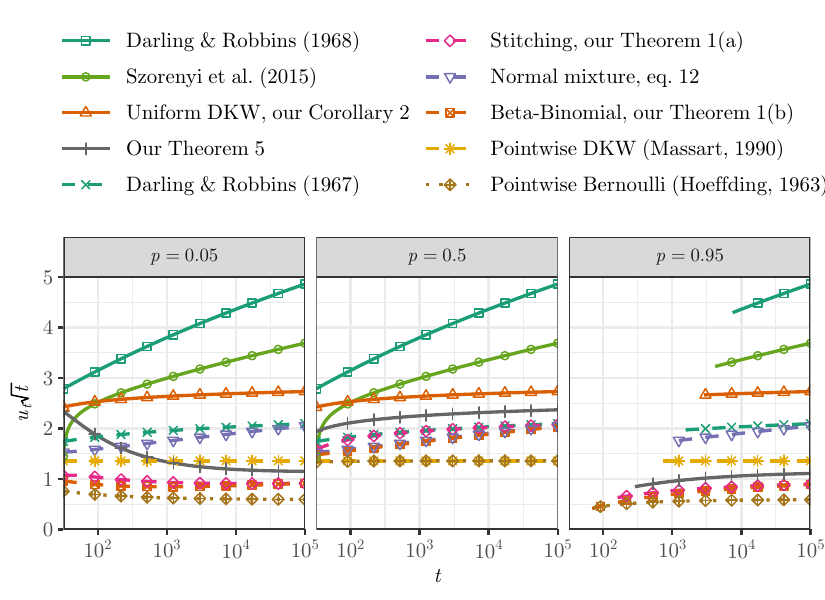}
  \caption{Plot of upper confidence bound radii $u_t$, normalized by $\sqrt{t}$
    to facilitate comparison. Each panel shows estimation radius for a different
    quantile, $p = 0.05$, 0.5, and 0.95, respectively. All bounds correspond to
    two-sided $\alpha = 0.05$. Dotted line is valid for a fixed quantile at a
    fixed time, dashed lines are valid uniformly over either time or quantiles,
    and solid lines are valid uniformly over both time and quantiles. In right
    panel, lines start at the sample size for which the upper confidence bound
    becomes nontrivial. See \cref{sec:comparison_details} for details of each
    bound shown. \label{fig:bounds_full}}
\end{figure}

\begin{itemize}
\item \Citet[Theorem 4]{darling_nonparametric_1968} give a test based on a bound
  for $\norm{\Fhat_t - F}_\infty$ which achieves uniformity over time via a
  union bound over $t \geq m$. We follow their guidance in remark (d), p. 808 to
  choose $u_t = \sqrt{t^{-2} (t + 1) (2 \log t + 0.601)}$.
\item \Citet[Proposition 1]{szorenyi_qualitative_2015} uses a similar
  union-bounding argument on the optimal DKW inequality of
  \citet{massart_tight_1990}. We adjust their result so that the union bound
  only applies over $t \geq 32$, yielding
  $u_t = \sqrt{t^{-1} (\log(t - 31) + 2.093)}$.
\item For \cref{th:uniform_lil_quantiles}, we set $A = 0.85$ and numerically
  choose $C = 8.123$, so
  $u_t = 0.85 \sqrt{t^{-1} (\log \log (et / 32) + 8.123)}$.
\item For \cref{th:uniform2}, we set $\delta = 0.5$, $\eta = 2.041$, and
  $s = 1.4$.
\item \Citet[Section 2]{darling_confidence_1967} give an explicit confidence
  sequence for the median, which applies to other quantiles as well. In this
  case,
  \begin{align}
    u_t = (3/2\sqrt{2} ) \sqrt{t^{-1} (\log \log t + 1.457)}.
  \end{align}
\item For \cref{th:fixed}(a), we set $\eta = 2.04$ and $s = 1.4$, as in
  \eqref{eq:stitching_simple}.
\item For \cref{th:fixed}(b), we set $r = 0.145$ for $p = 0.05$ and $p = 0.95$,
  while $r = 0.758$ for $p = 0.5$, in accordance with \eqref{eq:bb_tuning}.
\item The normal mixture bound \eqref{eq:normal_mixture} uses $r = 0.504$.
\item The DKW bound for a fixed time uses $u_t = 1.358 \sqrt{n}$.
\item The fixed-sample Bernoulli bound is based on \citet[equation
  2.1]{hoeffding_probability_1963}, and is given by the solution in $x$ to
  $t\, \KL{p + x}{p} = \log(2 / 0.05)$, where
  \begin{align}
    \KL{q}{p} = q \log\pfrac{q}{p} + (1 - q) \log\pfrac{1-q}{1-p}
  \end{align}
  denotes the Bernoulli Kullback-Leibler divergence.
\end{itemize}

\section{Implementation details for \cref{th:quantile_ab}}\label{sec:ab_impl}

The tests in \cref{th:quantile_ab} involve minimizing over possibly multimodal
sums of the functions $G_{k,t}(x)$, $G^+_{k,t}(x)$, and $G^-_{k,t}(x)$, with
$G_{k,t}$ itself defined in terms of a minimization. In this section, we discuss
details for implementing these tests, which require $\Ocal(t \log t)$ time in
the worst case. We focus the discussion on the two-sided test
\eqref{eq:ab_two_tailed}. The one-sided test \eqref{eq:ab_one_tailed} is
similar, as we briefly discuss at the end of the section.

Fix any $p \in (0,1)$, and $r > 0$. The key observation is that
$\log M_{p,r}(s, p (1-p) n)$ is continuous and unimodal on the domain
$s \in [-pn, (1-p)n]$ for any $n \in \N$, since $M_{p,r}(s, v)$ is convex and
finite on the domain $s \in [-v/(1-p), v/p]$ \citep[Appendix
C]{howard_uniform_2018}. (It may be verified that $\log M_{p,r}(\cdot, v)$ is
itself convex, but we do not use that fact here.) Let
\begin{align}
  a_{k,t} = \argmin_{a \in [0, 1]} \log M_{p,r}((a-p) N_{k,t}, p (1-p) N_{k,t}),
\end{align}
which may be found via numerical optimization. Then from the definition of
$G_{k,t}(x)$ and its unimodality, together with \eqref{eq:Flt} and
\eqref{eq:Fmgt}, we have
\begin{align}
  G_{k,t}(x) = \begin{cases}
    \log M_{p,r}\big((\Fhat_{k,t}(x) - p) N_{k,t},\, p (1-p) N_{k,t}\big),
      & x < \Qhat^-_{k,t}\eparen{a_{k,t}}, \\
    \log M_{p,r}\big((a_\star - p) N_{k,t},\, p (1-p) N_{k,t}\big),
      & \Qhat^-_{k,t}\eparen{a_{k,t}} \leq x \leq
        \Qhat_{k,t}\eparen{a_{k,t}}, \\
    \log M_{p,r}\big((\Fhat^-_{k,t}(x) - p) N_{k,t},\, p (1-p) N_{k,t}\big),
      & x > \Qhat_{k,t}\eparen{a_{k,t}}.
  \end{cases}
\end{align}
So once the value $a_{k,t}$ has been found, $G_{k,t}(x)$ is given in closed form
for any $x$. Note also that $G_{k,t}(x)$ is nonincreasing on
$x < \Qhat^-_{k,t}(a_{k,t})$, nondecreasing on $x > \Qhat_{k,t}(a_{k,t})$, and
constant on $\Qhat^-_{k,t}(a_{k,t}) \leq x \leq \Qhat_{k,t}(a_{k,t})$.

Unfortunately, the objective
$l(x) \defineas G_{1,t}(x) + G_{2,t}(x + \delta^\star)$ is not unimodal in
general. Suppose without loss of generality that
$\Qhat_{1,t}(a_{1,t}) \leq \Qhat_{2,t}(a_{2,t}) - \delta^\star$, so that
$G_{1,t}(x)$ begins increasing before $G_{2,t}(x + \delta^\star)$ does, and
define $x_- \defineas \Qhat_{1,t}(a_{1,t})$ and
$x_+ \defineas \Qhat^-_{2,t}(a_{2,t}) - \delta^\star$. Then $l(x)$ is
nonincreasing on $x < x_-$ and nondecreasing on $x > x_+$, but in general may
achieve many local minima on $[x_-, x_+]$. On this interval, $l(x)$ only
decreases at values $x = X_{2,s} + \delta^\star$ for some $s \leq t$, i.e.,
$l(x)$ decreases at values of $x$ which have been observed from the second
arm. So to find the minimum, we must evaluate $l(x)$ at each point
$x \in \brace{x_-, x_+} \cup \brace{X_{2,s} + \delta^\star : s \leq t, x_- \leq
  X_{2,s} + \delta^\star \leq x_+}$. This requires $\Ocal(N_{2,t})$ time in
general, though the use of $x_-$ and $x_+$ will improve constants. In the corner
case $x_+ \leq x_-$, we must have $l(x)$ achieving its minimum at $x = x_-$.

We also need to efficiently evaluate the empirical CDFs $\Fhat_{k,t}$ and
$\Fhat^-_{k,t}$ and the empirical quantile functions $\Qhat_{k,t}$, and
$\Qhat^-_{k,t}$. For this, we use a balanced binary tree in which each node is
augmented with the size of the subtree rooted at that node. This allows
evaluation of the empirical CDFs and quantile functions in $\Ocal(\log N_{k,t})$
time.

For the one-sided test \eqref{eq:ab_one_tailed}, we have that $G^+_{k,t}(x)$ is
nondecreasing and $G^-_{k,t}(x)$ is nonincreasing over all $x \in \Xcal$, since
$M^1_{p,r}(s,v)$ is nondecreasing \citep[Appendix C]{howard_uniform_2018}. We
must therefore search over all values
$x \in \brace{X_{2,s} + \delta^\star : s \leq t}$.

\section{Full comparison of quantile best-arm strategies}
\label{sec:bai_full}

\Cref{fig:bai_full} adds to \cref{fig:bai} two additional best-arm
strategies. First, we include a variant of Algorithm 1 from
\citet{szorenyi_qualitative_2015}, ``QPAC'', in which we simply replace their
confidence sequence with our tighter confidence sequence based on a one-sided
variant of the beta-binomial confidence sequence \cref{th:fixed}(b). This shows
the improvement due to our confidence sequence alone under the QPAC sampling
strategy. Second, we include our QLUCB algorithm with the same confidence
sequence as in \citet{szorenyi_qualitative_2015}. Comparing this to the original
algorithm of \citet{szorenyi_qualitative_2015} shows the improvement due to our
sampling strategy alone. The plot shows that both the confidence sequence and
the sampling strategy lead to improvements, but the confidence sequence
contributes more to the overall improvement.

\begin{figure}
  \centering
  \includegraphics{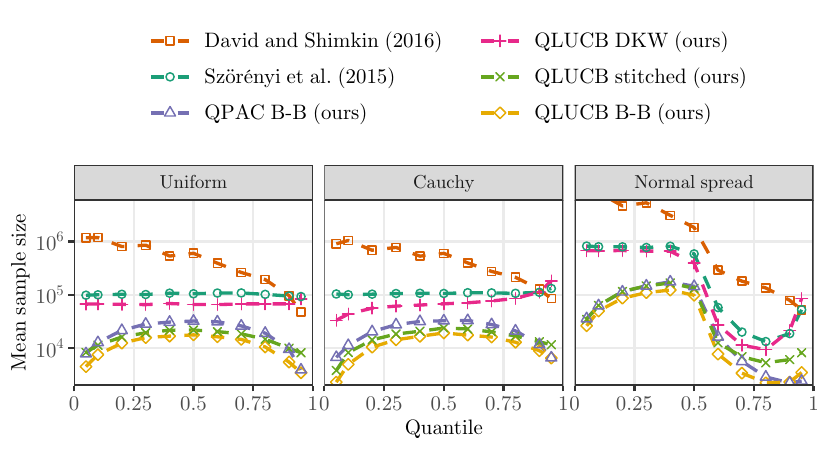}
  \caption{Average sample size for various quantile best-arm identification
    algorithms based on 64 simulation runs, with $\epsilon = 0.025$ and
    $\pi = 0.05, 0.1, 0.2, \dots, 0.8, 0.9, 0.95$. Left panel shows results for
    arms with uniform distributions on intervals of length one; middle panel
    shows arms with Cauchy distributions have unit scale; and right panel shows
    arms with standard normal distributions except for one, which has a standard
    deviation of two instead of one. In this last case, the exceptional arm is
    best for quantiles above $0.53$, while for quantiles below $0.45$, the other
    arms are all $\epsilon$-optimal. Plot includes Algorithm 2 of
    \citet{david_pure_2016}, Algorithm 1 of \citet{szorenyi_qualitative_2015},
    and a modification of Algorithm 1 of \citet{szorenyi_qualitative_2015},
    ``QPAC B-B'', which uses the one-sided variant of our beta-binomial
    confidence sequence \cref{th:fixed}(b). We compare our QLUCB algorithm based
    on three different confidence sequences: the stitched confidence sequence
    \eqref{eq:qlucb_stitched} based on \cref{th:fixed}(a); a one-sided variant
    of the beta-binomial (``B-B'') confidence sequence, \cref{th:fixed}(b); and
    the same DKW-plus-union-bound confidence sequence as QPAC, for
    comparison. Observe that our proposed changes in algorithm and in confidence
    sequences both yield improvements, separately and
    together. \label{fig:bai_full}}
\end{figure}

\section{Analogy to multiple testing}\label{sec:fcr}

From a multiple testing point of view, one may view our confidence sequences as
controlling a familywise error rate for miscoverage: with high probability, all
constructed intervals will simultaneously achieve coverage. An alternative goal
would be to control the false coverage rate, the expected proportion of
intervals that fail to cover their parameters. Here we observe that this goal is
achieved, asymptotically, by any asymptotically pointwise-valid intervals:

\begin{proposition}
  Suppose the sequence of $(1-\alpha)$-confidence intervals
  $(\CI_t)_{t=1}^\infty$ is asymptotically pointwise valid:
  \begin{align}
    \limsup_{t \to \infty} \P(\CI_t \text{ fails to cover}) \leq \alpha.
  \end{align}
  Then the sequence achieves asymptotic false coverage rate control:
  \begin{align}
    \limsup_{t \to \infty} \E\ebracket{
      \frac{1}{t} \sum_{i=1}^t
      \indicatorb{\CI_i \text{ fails to cover}}} \leq \alpha.
      \label{eq:fcr_result}
  \end{align}
\end{proposition}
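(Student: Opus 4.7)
The plan is to reduce the claim to a Cesàro-type averaging lemma for real sequences. First, I would use linearity of expectation (together with the fact that the indicators are $\{0,1\}$-valued and therefore integrable) to rewrite the expected empirical miscoverage rate as an average of pointwise miscoverage probabilities:
\begin{equation*}
  \E\ebracket{\frac{1}{t} \sum_{i=1}^t \indicatorb{\CI_i \text{ fails to cover}}}
  = \frac{1}{t} \sum_{i=1}^t p_i,
  \quad \text{where } p_i \defineas \P(\CI_i \text{ fails to cover}).
\end{equation*}
This step is the pivot of the argument: it converts the quantity of interest into a deterministic Cesàro average of the sequence $(p_i)_{i=1}^\infty \subset [0,1]$.

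Next, I would invoke the standard fact that if a bounded sequence $(p_i)$ satisfies $\limsup_{i \to \infty} p_i \leq \alpha$, then the averages $t^{-1} \sum_{i=1}^t p_i$ also have $\limsup$ bounded by $\alpha$. Concretely: fix $\epsilon > 0$ and, using the pointwise validity hypothesis, choose $N$ such that $p_i \leq \alpha + \epsilon$ for all $i \geq N$. Splitting the sum and bounding the tail using $p_i \leq 1$ yields
\begin{equation*}
  \frac{1}{t} \sum_{i=1}^t p_i
  \leq \frac{N-1}{t} + \frac{t - N + 1}{t} (\alpha + \epsilon).
\end{equation*}
Letting $t \to \infty$ gives $\limsup_{t\to\infty} t^{-1} \sum_{i=1}^t p_i \leq \alpha + \epsilon$, and since $\epsilon$ was arbitrary, the desired bound~\eqref{eq:fcr_result} follows.

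There is no real obstacle here, since the argument is essentially just linearity of expectation plus a Cesàro lemma. The only conceptual subtlety worth emphasizing is that the result requires no independence or joint distributional assumptions on the family $(\CI_t)$: because we are controlling the \emph{expected} false coverage rate rather than an almost-sure statement, the marginal miscoverage probabilities $p_i$ are all that enter. An almost-sure version of the conclusion would instead require a law of large numbers for the (possibly highly dependent) indicator sequence and would not follow from the marginal hypothesis alone.
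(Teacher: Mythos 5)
Your proof is correct and takes essentially the same approach as the paper: reduce by linearity of expectation to a Cesàro average of the marginal miscoverage probabilities $p_i$, then apply the standard $\epsilon$-splitting argument to bound the $\limsup$ of the averages by $\alpha$. The paper's proof is word-for-word the same idea, differing only in cosmetic details of how the tail of the sum is bounded.
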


\begin{proof}
  Write $p_t \defineas \P(\CI_t \text{ fails to cover})$. By assumption
  $\limsup_{t \to \infty} p_t \leq \alpha$, and by linearity of expectation, the
  limit in \eqref{eq:fcr_result} is
  $\limsup_{t \to \infty} t^{-1} \sum_{i=1}^t p_i$. For any $\epsilon > 0$,
  choose $s$ sufficiently large that $p_t \leq \alpha + \epsilon$ for all
  $t > s$. Then
  \begin{align}
    \limsup_{t \to \infty} \frac{1}{t} \sum_{i=1}^t p_i
    \leq \limsup_{t \to \infty} \frac{1}{t} \sum_{i=1}^s p_i
      + \limsup_{t \to \infty} \frac{1}{t} \sum_{i=s+1}^t p_i
    = \limsup_{t \to \infty} \frac{1}{t} \sum_{i=s+1}^t p_i
    \leq \alpha + \epsilon,
  \end{align}
  by our choice of $s$. As $\epsilon$ was arbitrary, the proof is complete.
\end{proof}

Thus for asymptotic FCR-controlled confidence intervals for a quantile, we need
only compute asymptotically pointwise-valid confidence intervals for quantile,
and these follow generically from the central limit theorem. Let $z_p$ denote
the $p$-quantile of a standard normal distribution.
\begin{proposition}
  In the setting of \cref{sec:fixed}, for any $p \in (0,1)$ and any
  $\alpha \in (0,1)$, we have
  \begin{multline}
    \lim_{t \to \infty} \P\Bigg(
      Q^-(p) < \Qhat_t\eparen{p - z_{1-\alpha/2} \sqrt{\frac{p(1-p)}{t}}}
      \\ \text{ or }
      Q(p) > \Qhat^-_t\eparen{p + z_{1-\alpha/2} \sqrt{\frac{p(1-p)}{t}}}
    \Bigg) \leq \alpha. \label{eq:clt_ci}
  \end{multline}
\end{proposition}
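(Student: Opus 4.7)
The plan is to leverage the martingale $S_t(p)$ constructed in \cref{sec:proofs} and invoke the classical central limit theorem, mirroring the structure of the proof of \cref{th:fixed} but replacing the time-uniform concentration step with pointwise CLT convergence.

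Concretely, I would first recall that $S_t(p)$ defined in \eqref{eq:St_def} is a sum of $t$ i.i.d.\ mean-zero increments bounded in $[-p,1-p]$ with variance exactly $p(1-p)$, and moreover satisfies the sandwich inequality $\Fhat^-_t(Q(p)) \leq p + S_t(p)/t \leq \Fhat_t(Q^-(p))$ almost surely. By the Lindeberg-L\'evy CLT, $S_t(p)/\sqrt{t\,p(1-p)} \xrightarrow{d} \Normal(0,1)$, so writing $u_t \defineas z_{1-\alpha/2}\sqrt{p(1-p)/t}$, we obtain
\begin{align*}
\lim_{t \to \infty} \P\eparen{\abs{S_t(p)/t} \geq u_t} \;=\; \alpha.
\end{align*}

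The final step is to show the miscoverage event is contained in $\{|S_t(p)/t| \geq u_t\}$. Taking contrapositives of the implications \eqref{eq:Fgt} and \eqref{eq:Fmlt}, the event $Q^-(p) < \Qhat_t(p - u_t)$ forces $\Fhat_t(Q^-(p)) \leq p - u_t$, which combined with the sandwich gives $S_t(p)/t \leq -u_t$; analogously, $Q(p) > \Qhat^-_t(p + u_t)$ forces $\Fhat^-_t(Q(p)) \geq p + u_t$ and hence $S_t(p)/t \geq u_t$. The desired bound then follows directly, possibly with $\limsup$ in place of $\lim$ if one wants to be careful about the distinction.

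There is no real obstacle here --- the only subtlety is the bookkeeping needed when $F$ has atoms, so that $Q^-(p) < Q(p)$ and one must distinguish $\Fhat_t$ from $\Fhat^-_t$. This is precisely why the martingale $S_t(p)$ was defined using the interpolation weight $\pi(p)$ in \eqref{eq:pi_defn}: it neutralizes the atom at $Q(p)$, guaranteeing that $S_t(p)$ still has variance $p(1-p)$ even in the discontinuous case, which is what makes a single, clean CLT invocation suffice across all distributions covered by our general setting.
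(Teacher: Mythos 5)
Your plan matches the paper's proof in structure --- you use the same martingale $S_t(p)$, the same sandwich inequality, the same CLT-replaces-uniform-concentration step, and the same pair of implications from $\Fhat_t$ / $\Fhat^-_t$ to $\Qhat_t$ / $\Qhat^-_t$. However, there is one step that is simply false, and you emphasize it as the key point: the claim that $\Var(\Delta S_1) = p(1-p)$ exactly, and that the interpolation weight $\pi(p)$ is what guarantees this. In fact, $\pi(p)$ is chosen so that $\E[\Delta S_1] = 0$, not so that the variance matches $p(1-p)$. Whenever $F$ has an atom at $Q(p)$ and $F^-(Q(p)) < p < F(Q(p))$, the increment $\Delta S_1$ takes the \emph{interior} value $\pi(p) - p$ on the atom, which strictly reduces the variance below that of a $\Bernoulli(p) - p$ variable. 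As an extreme example, if $F$ puts all its mass at a single point and $p = 1/2$, then $\pi(1/2) = 1/2$ and $\Delta S_1 \equiv 0$, so the variance is $0$, not $1/4$. Consequently $S_t(p)/\sqrt{t\,p(1-p)}$ does \emph{not} converge to $\Normal(0,1)$ in general, and the intermediate limit you write, $\lim_t \P(|S_t(p)/t| \geq u_t) = \alpha$, is wrong (the correct statement is $\leq$, with equality only in the atom-free case).

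The paper avoids this by not computing the variance at all: it uses the boundedness $\Delta S_1 \in [-p,\,1-p]$ to conclude $\E(\Delta S_1 + p)(\Delta S_1 - (1-p)) \leq 0$, which expands (using mean zero) to $\E \Delta S_1^2 \leq p(1-p)$. This upper bound is all one needs, because the miscoverage event is $\{|S_t(p)| \geq z_{1-\alpha/2}\sqrt{t\,p(1-p)}\} \subseteq \{|S_t(p)| \geq z_{1-\alpha/2}\sqrt{t\,\E\Delta S_1^2}\}$, and the CLT applied with the true variance bounds the latter by $\alpha$ in the limit. (If $\E\Delta S_1^2 = 0$, the probability is $0$ trivially.) Your proof is salvageable by replacing ``variance exactly $p(1-p)$'' with this one-line inequality, but as written the crucial step fails on discontinuous $F$, which is precisely the case you single out as requiring care.
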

\begin{proof}
  Define $S_t \defineas S_t(p)$ from \eqref{eq:St_def}, a sum of i.i.d.,
  mean-zero, bounded increments
  $\Delta S_t \defineas S_t - S_{t-1} \in [-p, 1-p]$ (taking
  $\Delta S_1 = S_1$). The variance $\E S_1^2$ is upper bounded by $p(1-p)$;
  indeed,
  $0 \geq \E (\Delta S_1 + p)(\Delta S_1 - (1 - p)) = \E \Delta S_1^2 - p (1 -
  p)$. By the central limit theorem,
  \begin{align}
    \lim_{t \to \infty} \P\eparen{
      \eabs{S_t} \geq z_{1-\alpha/2}\sqrt{\frac{p(1-p)}{t}}}
    \leq \lim_{t \to \infty} \P\eparen{
      \eabs{S_t} \geq z_{1-\alpha/2}\sqrt{t \E \Delta S_1^2}}
    = \alpha.
  \end{align}
  Now repeat the argument behind \eqref{eq:f_property},
  \eqref{eq:generic_step_1}, and \eqref{eq:generic_step_2} to conclude
  \eqref{eq:clt_ci}.
\end{proof}

\end{document}